\documentclass[11pt,a4paper]{amsart}
\listfiles

\usepackage[english]{babel} 
\usepackage{hyperref} 
\usepackage{amsfonts,amscd,amsmath,amssymb,amsthm} 
\usepackage{xparse} 
\usepackage{dsfont} 
\usepackage[utf8]{inputenc} 
\usepackage{xcolor}
\usepackage{tikz}

\marginparwidth 20mm \addtolength{\textheight}{10mm}
\addtolength{\textwidth}{20mm} \addtolength{\topmargin}{-10mm}

\numberwithin{equation}{section}

\marginparwidth 20mm
\addtolength{\textheight}{10mm}
\addtolength{\textwidth}{20mm}
\addtolength{\topmargin}{-10mm}
\oddsidemargin -4mm
\evensidemargin -4mm

\newtheorem{theorem}{Theorem}[section]{\bf}{\it}
\newtheorem{proposition}[theorem]{Proposition}{\bf}{\it}
\newtheorem{corollary}[theorem]{Corollary}{\bf}{\it}
\newtheorem{lemma}[theorem]{Lemma}{\bf}{\it}

\theoremstyle{definition}
\newtheorem{definition}[theorem]{Definition}
\newtheorem{example}[theorem]{Example}

\theoremstyle{remark}
\newtheorem{remark}[theorem]{Remark}

\theoremstyle{plain}
\newcounter{countertheoremintro}
\newtheorem{introtheorem}[countertheoremintro]{Theorem}

\newcommand{\idempotents}{E}


\newcommand{\unifroealg}{C_u^*(S, d)}

\newcommand{\lrel}{\mathrel{\mathcal{L}}}
\newcommand{\rrel}{\mathrel{\mathcal{R}}}
\newcommand{\drel}{\mathrel{\mathcal{D}}}

\newcommand{\prop}{\text{prop}}

\newcommand{\diam}{\operatorname{diam}}
\newcommand{\sch}[1]{S\Gamma({#1})} 
\newcommand{\asdim}{\operatorname{asdim}}
\newcommand{\sym}[1]{\mathcal I_{#1}} 
\newcommand{\id}{\operatorname{id}}
\newcommand{\ol}{\overline}

\title[Quasi-countable inverse semigroups as metric spaces]{Quasi-countable inverse semigroups as metric spaces, and the uniform Roe algebras of locally finite inverse semigroups}

\author[Yeong Chyuan Chung]{Yeong Chyuan Chung}
\address{School of Mathematics, Jilin University, Changchun 130012, Jilin, P.R. China}
\email{chungyc@jlu.edu.cn}

\author[Diego Mart\'{i}nez]{Diego Mart\'{i}nez $^{1}$}
\address{Mathematisches Institut, WWU M\"{u}nster, Einsteinstr. 62, 48149 M\"{u}nster, Germany}
\email{diego.martinez@uni-muenster.de}

\author[N\'{o}ra Szak\'{a}cs]{N\'{o}ra Szak\'{a}cs}
\address{School of Mathematics, University of Manchester, Manchester M13 9PL, UK}
\email{nora.szakacs@manchester.ac.uk}

\date{\today}
\thanks{{$^{1}$} Funded by the Deutsche Forschungsgemeinschaft (DFG, German Research Foundation) under Germany’s Excellence Strategy – EXC 2044 – 390685587, Mathematics Münster – Dynamics – Geometry – Structure; the Deutsche Forschungsgemeinschaft (DFG, German Research Foundation) – Project-ID 427320536 – SFB 1442, and ERC Advanced Grant 834267 - AMAREC}
\keywords{Inverse semigroup, proper metric, right invariant metric, locally finite, asymptotic dimension}
\subjclass[2020]{20M18, 46L05, 46L89}


\begin{document}

  \begin{abstract}
    Given any quasi-countable, in particular any countable inverse semigroup $S$, we introduce a way to equip $S$ with a proper and right subinvariant extended metric. This generalizes the notion of proper, right invariant metrics for discrete countable groups. Such a metric is shown to be unique up to bijective coarse equivalence of the semigroup, and hence depends essentially only on $S$. This allows us to unambiguously define the uniform Roe algebra of $S$, which we prove can be realized as a canonical crossed product of $\ell^\infty(S)$ and $S$. We relate these metrics to the analogous metrics on Hausdorff \'{e}tale groupoids.

    Using this setting, we study those inverse semigroups with asymptotic dimension $0$. Generalizing results known for groups, we show that these are precisely the locally finite inverse semigroups, and are further characterized by having strongly quasi-diagonal uniform Roe algebras. We show that, unlike in the group case, having a finite uniform Roe algebra is strictly weaker and is characterized by $S$ being locally $\mathcal L$-finite, and equivalently sparse as a metric space.
  \end{abstract}
  \maketitle

  \section{Introduction}\label{sec:intro}
  
  Large scale geometry (or coarse geometry) is, roughly speaking, the study of metric spaces viewed from afar, that is, two spaces that look the same from a great distance are considered equivalent. The idea was already present in classical results such as Mostow's rigidity theorem~\cite{Mos68} and Wolf's work on growth of groups~\cite{Wolf68}, and has more recently played a key role in the study of finitely generated groups as metric spaces. 
  
  Finitely generated discrete groups are naturally equipped with a metric called the \emph{word metric}, which is defined as the path metric in (any of) their Cayley graphs. The large scale geometry of this metric is group invariant, and is closely tied to the algebraic properties of the group. In the early 90s, Gromov initiated a program for the systematic study of large scale properties of finitely generated groups \cite{Gro81,Gro93} which led to the development of \emph{geometric group theory}, and has applications to topology and index theory~\cite{Roe03,Yu98}, as well as operator algebras and noncommutative geometry~\cite{Yu95,Yu00}. 
  
  The idea that groups can be viewed as metric spaces generalizes to countably generated groups. The necessary properties of the metric that ensure it is connected to the algebraic structure of the group are \emph{right invariance}, that is, $d(g, h) = d(gx, hx)$ for every $g, h, x \in G$, and \emph{properness}, meaning every ball of finite radius is finite. It is a classical result of Birkhoff and Kakutani from the 30s that any first countable Hausdorff topological group is metrizable, and the metric can be chosen to be right invariant, and Struble \cite{Stru74} observed that if the group is locally compact and second countable, the metric is also proper. In particular, countable discrete groups can be equipped with a proper, right invariant metric. Dranishnikov and Smith observe \cite{DS06} that this metric is unique up to \emph{bijective coarse equivalence}, which allows them to unambiguously define the asymptotic dimension of non-finitely generated subgroups of finitely generated groups. Other well-studied large scale properties, such as amenability, property A, or coarse embeddability into a Hilbert space, also naturally generalize to this setting.
  

  Motivated by index theory on manifolds, Roe introduced two C*-algebras associated to metric spaces \cite{Roe03}, now called the \emph{Roe algebra} and the \emph{uniform Roe algebra}, which capture the large scale geometry of the space. If the metric space comes from a countable group, the uniform Roe algebra can also be constructed as a crossed product from the right action of the group on itself. This provides a three-way interplay between group theory, large scale geometry and operator algebras, which lies at the heart of the research on the coarse Baum-Connes conjecture and the Novikov conjecture \cite{Yu95, Yu98, Yu00}.

%
  
  In the past few years, some of the ideas above have been extended from groups to \emph{inverse semigroups} \cite{GSSz, LM21}, one of the most important generalizations of groups. These are the semigroups where each element $s$ has a unique inverse $s^\ast$ such that $ss^\ast s=s$ and $s^\ast ss^\ast=s^\ast$. They originally emerged in the 50s as the algebraic abstraction of partial symmetries, and an extensive theory of abstract inverse semigroups was developed in the following decades (see~\cite{L98} and references therein). At the same time, inverse semigroups kept appearing in various guises all over mathematics: in the context of C*-algebras, tilings, model theory, linear logic and combinatorial group theory \cite{L98, law2}, among others. Constructing C*-algebras from inverse semigroups has become a central theme in operator algebras \cite{exel}.
  
  Equipping inverse semigroups with a word metric comes with several technical caveats. The main difficulties are that multiplication in inverse semigroups is typically not cancellative, and the Cayley graph of an inverse semigroup may not be strongly connected; in particular, edges do not necessarily come in inverse pairs. As an extreme case, the inverse semigroup may have a $0$, which is a sink reachable from every vertex.  
  However, for finitely generated inverse semigroups, considering the path metric on the strong components of the Cayley graph and defining different components to be at infinite distance apart yields an \emph{extended} metric that is naturally connected to properties of the semigroup \cite{GSSz, LM21}. Moreover, the uniform Roe algebra $\unifroealg$ associated to this extended metric space can also be obtained as a crossed product from the right action of the semigroup on itself \cite{LM21}, and large scale invariant amenability-type conditions (such as domain measurablility and property A, see~\cite{ALM19,LM21}) correspond to C*-properties of the uniform Roe algebra.

  In this paper, our goal is to extend this approach to countable inverse semigroups as was done in the group case by equipping them with a metric in a coarse unique way, and study inverse semigroups with asymptotic dimension $0$ from both an algebraic and C* point of view.
  
  To define the metric, the first step is to find the analogous properties to right invariance and properness in the inverse semigroup settings. Right invariance itself cannot be assumed -- going back to the extreme case when $S$ has a zero element $0 \in S$, it would imply $d(s,t)=d(s0,t0)=0$ for all pairs $s,t \in S$. Hence, right invariance is replaced by \emph{right subinvariance}, meaning we assume $d(sx, tx) \leq d(s, t)$ for all $s, t, x \in S$.       

 It is easy to see that this condition reduces to right invariance when $S$ happens to be a group. Defining properness is a bit more subtle, and we will leave its discussion to Definition~\ref{def:metric}, where all these notions are introduced. For now, let us record the following theorem, which is one of the main contributions of the paper.
  \begin{introtheorem}[see Theorem~\ref{thm:metric}] \label{introthm:metric}
    Let $S$ be an inverse semigroup. Then the following statements are equivalent:
    \begin{enumerate}
      \item $S$ is quasi-countable, i.e., there is some countable $F \subset S$ such that $S = \langle F \cup E \rangle$, where $E$ is the set of idempotents of $S$;
      \item $S$ admits a proper and right subinvariant uniformly discrete extended metric whose components are exactly the $\mathcal{L}$-classes. 
    \end{enumerate}
    Moreover, such a metric is unique up to bijective coarse equivalence.
  \end{introtheorem}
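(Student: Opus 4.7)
The plan is to construct the metric in $(1) \Rightarrow (2)$ via a weighted word length in the spirit of the Birkhoff--Kakutani--Struble construction for groups, and in $(2) \Rightarrow (1)$ to extract a countable generating family from the metric itself; uniqueness is then handled by comparing two such metrics through the identity map.

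For $(1) \Rightarrow (2)$, I would fix a countable $F \subseteq S$ with $\langle F \cup E \rangle = S$, replace it by $F \cup F^*$ to assume $F = F^*$, enumerate $F = \{f_n\}_{n \geq 1}$, and choose weights $w(f_n) \geq 1$ with $w(f_n) = w(f_n^*)$ and $|\{f \in F : w(f) \leq r\}| < \infty$ for every $r$. Define $\ell : S \to [0, +\infty]$ as the infimum of $\sum_j w(f_{i_j})$ over all factorizations $s = x_1 \cdots x_n$ with $x_i \in F \cup E$, counting only the $F$-factors (so that idempotents are free). I would then propose
\[
d(s, t) = \ell(ts^*) \text{ if } s \lrel t, \quad d(s, t) = +\infty \text{ otherwise.}
\]
Symmetry comes from $F = F^*$; the triangle inequality from the identity $t s_1^* = (t s_2^*)(s_2 s_1^*)$ valid for $s_1 \lrel s_2 \lrel t$ combined with subadditivity of $\ell$; right subinvariance from the computation $(tx)(sx)^* = (ts^*) \cdot s(xx^*)s^*$, whose second factor is idempotent and therefore does not increase $\ell$. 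Uniform discreteness hinges on the lemma that $s \lrel t$ together with $ts^*$ idempotent forces $s = t$, which I would prove by combining $ts^* = st^*$ with $(ts^*)^2 = ts^*$ to derive $ts^*t = t$, and then use $s = st^*t = ts^*t = t$. Properness is the most delicate point, established by a combinatorial rewriting principle: using that idempotents commute and that $ef = f(f^*ef)$ for any generator $f$, every decomposition $u = e_0 f_{i_1} e_1 \cdots f_{i_k} e_k$ can be brought to the form $u = (f_{i_1} \cdots f_{i_k}) \cdot e$ for some $e \in E$; requiring further $u \in L_e$ forces $e$ to be uniquely determined by the $F$-word, so each $\mathcal L$-class contains at most one $u$ per weight-bounded $F$-word, and weight-bounded $F$-words are finite by design.

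For $(2) \Rightarrow (1)$, properness immediately implies every $\mathcal L$-class is countable. I would consider the set
\[
F_0 = \{u \in S \setminus E : d(u^*u, u) \leq 1\}
\]
of atomic non-idempotent generators at distance $\leq 1$ from their associated idempotent, which is finite inside each $\mathcal L$-class by properness. By right subinvariance, the orbit of such an element under right translation by $x \in S$ lands in the $\mathcal L$-class of $x^* u^* u x$ at distance $\leq 1$ from $(ux)^*(ux)$, and the generator it contributes differs from $u$ only by right multiplication by an idempotent; this reduces the problem to picking countably many representatives of $F_0$ modulo this idempotent action, which can be done by a diagonal selection leveraging properness. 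Any $s \in S$ is then factored by decomposing along a shortest chain in its $\mathcal L$-class as a product of atomic generators in $F$ and idempotents in $E$, yielding $\langle F \cup E \rangle = S$. For uniqueness, any two metrics $d_1, d_2$ satisfying the conditions determine the same components (the $\mathcal L$-classes), and a quantitative comparison between the extracted length functions shows that $d_1$-balls are $d_2$-bounded and conversely, making the identity on $S$ a bijective coarse equivalence.

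The central obstacle throughout is reconciling the countable family $F$ with the potentially uncountable semilattice $E$: in the forward direction it manifests when proving properness, since a priori uncountably many distinct idempotent-augmented decompositions could witness $\ell(ts^*) \leq r$, and is resolved by the rewriting trick that collapses all idempotent freedom into a single $e$ forced by the $\mathcal L$-class; in the reverse direction it reappears as the need to reduce a possibly uncountable set $F_0$ of atomic generators to a countable subfamily, handled via right subinvariance by identifying generators that differ only by right multiplication by idempotents.
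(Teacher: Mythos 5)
Your direction $(1)\Rightarrow(2)$ is correct, and it is essentially the construction the paper itself uses: your length function $\ell$ is the length-function form of the weighted word metric of Definition~\ref{def:weigthed-metric} and Proposition~\ref{prop:weigthed-metric}, the passage $d(s,t)=\ell(ts^*)$ reproduces Proposition~\ref{prop:length-functions-and-metrics}, and your rewriting $ef=f(f^*ef)$ (with the observation that the surviving idempotent can be forced to equal $u^*u$, so that $\ell(yx^*)\le r$ yields $y=gx$ for an $F$-word $g$ of weight $\le r$) is the paper's Lemma~\ref{lemma:cnt-quasi-gen:subsem}. Your discreteness lemma ($s\lrel t$ and $ts^*\in E$ imply $s=t$) is also correct.

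The genuine gap is in $(2)\Rightarrow(1)$. Your generating set $F_0=\{u\in S\setminus E : d(u^*u,u)\le 1\}$ together with ``decomposing along a shortest chain'' tacitly assumes the metric is geodesic at scale $1$, which is not part of Definition~\ref{def:metric}. Concretely, take $S=\mathbb Z$ with $d(m,n)=5\,|m-n|$: this metric is proper, right invariant and uniformly discrete, yet $F_0\setminus E=\emptyset$, so $\langle F_0\cup E\rangle=\{0\}\neq S$. The same failure occurs for any rescaled metric, or for metrics like $d_2$ of Example~\ref{ex:nonfl}, whose components contain no two points at distance $\le 1$. So you must work at every scale $n$, and once you do, no chains are needed at all.

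The second, deeper problem is that the finiteness you actually establish for $F_0$ --- finiteness \emph{inside each $\lrel$-class} --- is only bounded geometry (Lemma~\ref{lemma:bddgeom}), which is strictly weaker than properness and provably insufficient here: take the Clifford semigroup consisting of uncountably many copies $\mathbb Z\times\{e_i\}$, $i\in I$ uncountable, with an identity adjoined on top and a zero below, where $(m,e_i)(n,e_j)=0$ for $i\neq j$; the metric making each copy of $\mathbb Z$ a component with its usual metric is right subinvariant, uniformly discrete and of bounded geometry, but $S$ is not quasi-countable, since a countable $F$ meets only countably many copies. Hence the entire burden of your argument falls on the unexplained ``diagonal selection leveraging properness,'' and what that selection must be is exactly the paper's proof of Theorem~\ref{thm:metric}: for each $n$, properness in the sense of Definition~\ref{def:metric}~(\ref{def:metric:pr}) directly provides a \emph{single} finite set $F_n\Subset S$, uniform over all $\lrel$-classes, such that $s\in F_n s^*s$ whenever $s\notin E$ and $d(s^*s,s)\le n$; since every $s\notin E$ satisfies $d(s^*s,s)<\infty$, the countable set $F=\bigcup_n F_n$ gives $S=\langle F\cup E\rangle$, with no representatives modulo an idempotent action and no chains. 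Finally, your uniqueness step is asserted rather than proved: ``$d_1$-balls are $d_2$-bounded'' is precisely the nontrivial claim, and proving it requires the monotonicity of length functions under the natural partial order combined with properness of both metrics (the paper's Lemma~\ref{lemma:unbounded-values-grow} and Proposition~\ref{prop:metric-unique}, which also adjoin an identity and treat the finitely upper bounded case separately).
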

  In particular, the above theorem provides an essentially unique metric to any countable inverse semigroup. It is worth mentioning that these metrics correspond to coarse metrics on the universal groupoid of $S$ in the sense of Ma and Wu~\cite{MW21}, who define length functions on locally compact \'{e}tale Hausdorff groupoids. In Subsection~\ref{groupoid-metric} we give a detailed discussion of these topics, see Theorem~\ref{thm:metric-groupoids-relations} for a precise relationship between the metrics studied in this text and those of~\cite{MW21}.

  Before recording more contributions of the paper, we would like to highlight the freedom that working with inverse semigroups entails. The following theorem shows that any metric space can appear as some $\mathcal{L}$-class of some countable inverse semigroup.
  \begin{introtheorem}[see~\ref{thm:wobbling-inv-sem}] \label{introthm:examples}
    Let $(X, d_X)$ be an infinite non-extended metric space of bounded geometry. Then there is a countable inverse semigroup $S$ and an $\lrel$-class $L \subset S$ such that $(L, d_S)$ is bijectively coarsely equivalent to $(X, d_X)$, where $d_S$ is any proper and right subinvariant metric on $S$.
  \end{introtheorem}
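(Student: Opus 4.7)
The plan is to realize $(X, d_X)$ as an $\mathcal{L}$-class inside a carefully chosen countable inverse semigroup of partial bijections of $X$, and then appeal to Theorem~\ref{introthm:metric} to control the metric. I first observe that the hypotheses force $X$ itself to be countable: fixing any $x_0 \in X$, we have $X = \bigcup_{n \geq 1} B(x_0, n)$, a countable union of finite sets by bounded geometry together with $d_X$ being non-extended. Take $S$ to be the inverse semigroup of partial bijections $f \colon A \to B$ between finite subsets $A, B \subset X$: this is a countable inverse subsemigroup of $\wobbling$, generated by the countable set $F = \{[x, y] : x, y \in X\}$ of singleton partial bijections $x \mapsto y$. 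In particular $S$ is quasi-countable, so Theorem~\ref{introthm:metric} applies.

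Next, fix $x_0 \in X$ and let $L$ be the $\mathcal{L}$-class of the idempotent $[x_0, x_0]$. Since $t \in L$ iff $t^\ast t = [x_0, x_0]$, i.e.\ $\operatorname{dom}(t) = \{x_0\}$, the assignment $\varphi \colon X \to L$ sending $y \mapsto [x_0, y]$ is a bijection. To pin down a convenient metric, I give each generator $[x, y] \in F$ the weight $w([x, y]) = d_X(x, y) + 1$ and let $d_S$ be the resulting weighted word-length (extended) metric on $S$: the $+1$ provides uniform discreteness, and bounded geometry of $X$ yields properness. By the uniqueness clause of Theorem~\ref{introthm:metric}, any other proper right subinvariant metric on $S$ is bijectively coarsely equivalent to this one, so it suffices to verify the conclusion for this specific $d_S$.

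The main technical step is to show that $\varphi$ is a bijective coarse equivalence $(X, d_X) \to (L, d_S|_L)$. For the upper bound, the factorization $[x_0, y] = [x_0, z] \cdot [z, y]$ immediately gives $d_S([x_0, y], [x_0, z]) \leq w([z, y]) = d_X(y, z) + 1$. For the lower bound, one exploits the natural action of $S$ on $X$ by partial bijections: if $r \in S$ realizes $[x_0, z] \cdot r = [x_0, y]$, then $z \in \operatorname{dom}(r)$ and $r(z) = y$, so tracking the orbit of $z$ through any factorization of $r$ into elements of $F \cup F^\ast$ and applying the triangle inequality in $X$ yields $d_X(y, z) \leq \ell(r)$. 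I expect the bookkeeping for this lower bound to be the main obstacle, particularly handling the interleaving of weightless idempotents (which may restrict domains) with the actual weighted generators in a word representing $r$. Once this is done, combining the two bounds shows that $\varphi$ is essentially an isometry up to an additive constant, hence a bijective coarse equivalence, completing the proof.
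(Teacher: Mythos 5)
There is a genuine gap, and it is precisely the pitfall the paper flags before its own proof of Theorem~\ref{thm:wobbling-inv-sem}. Your semigroup $S$ consists of \emph{finite-rank} partial bijections only, and your claim that ``bounded geometry of $X$ yields properness'' of the weighted word metric is false. Properness in the sense of Definition~\ref{def:metric}~(\ref{def:metric:pr}) demands, for each $r$, a \emph{single finite} set $F \Subset S$ such that every pair $x \neq y$ with $d_S(x,y) \leq r$ satisfies $y \in Fx$; applied to the rank-one $\lrel$-class $L$, this means a finite set of partial bijections must implement $z \mapsto y$ for \emph{all} pairs with $d_S(\gamma_{y,x_0},\gamma_{z,x_0}) \leq r$. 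Since each element of your $S$ has finite domain, a finite $F$ can only handle finitely many such pairs, whereas already for $X = \mathbb{Z}$ there are infinitely many pairs at distance $1$. Concretely, your weight function $w([x,y]) = d_X(x,y)+1$ has $w^{-1}(n)$ infinite, violating the hypothesis of Definition~\ref{def:weigthed-metric} and Proposition~\ref{prop:weigthed-metric}, so the ``weighted word metric'' you invoke is not proper. Moreover the flaw is not in the choice of metric but in the choice of semigroup: the argument just given shows that for \emph{any} proper right subinvariant metric on your $S$, each $r$-ball structure on $L$ contains only finitely many pairs of distinct points within distance $r$, so $(L,d_S)$ can never be bijectively coarsely equivalent to a space like $\mathbb{Z}$. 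The uniqueness clause of Theorem~\ref{thm:metric} therefore works against you: it propagates this defect to every admissible metric on your $S$. (A smaller error: the rank-one maps do not generate your $S$, since products of rank-$\leq 1$ maps have rank $\leq 1$; quasi-countability still holds only because $S$ is countable outright.)

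The paper's proof exists exactly to overcome this obstruction, and the missing idea is to adjoin \emph{infinite-rank} elements so that infinitely many rank-one moves at a given scale are implemented by finitely many semigroup elements. For each $n$, the graph $\Gamma_n$ on $X$ with edges the pairs at distance in $(n-1,n]$ has uniformly bounded degree by bounded geometry, so Vizing's theorem partitions its edges into finitely many matchings; each matching defines a full bijection (involution) of $X$, and these finitely many bijections per scale $n$, given weight $n$, together with the singleton idempotents $\id_y$, generate a countable inverse semigroup on which the weighted word metric \emph{is} proper (now $w^{-1}(n)$ is finite). Your upper- and lower-bound estimates on $L$ (factoring through a generator realizing $z \mapsto y$, and tracking orbits for the lower bound) are essentially the same as the paper's final computation and would go through in that corrected setting; but without the matching trick, or some comparable device producing finitely many generators per scale, the construction fails at the properness step.
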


  Extending the analogous result of~\cite{LM21}, we prove in Theorem~\ref{thm:roealg:unique} that the uniform Roe algebra associated to a proper and right subinvariant metric can also be recovered from the natural left action of the semigroup on itself.
  \begin{introtheorem}[see Theorem~\ref{thm:roealg:unique}] \label{introthm:roealg}
    Let $S$ be a quasi-countable inverse semigroup, and let $d$ be a proper and right subinvariant metric on $S$. Then $\ell^\infty(S) \rtimes_r S$ and $\unifroealg$ are $*$-isomorphic as C*-algebras, where the action of $S$ on $\ell^\infty(S)$ is the canonical action by left-translation.
  \end{introtheorem}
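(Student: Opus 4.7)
The plan is to show that $\ell^\infty(S) \rtimes_r S$ and $\unifroealg$ coincide as concrete $*$-subalgebras of $B(\ell^2(S))$, extending the strategy of~\cite{LM21} from the finitely generated to the quasi-countable setting. The key input is the right subinvariance of $d$, which controls the propagation of the partial isometries arising from the left action.

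For the setup, consider the canonical covariant representation of $(\ell^\infty(S), S)$ on $\ell^2(S)$: $\pi(f)$ acts by pointwise multiplication, while $s \in S$ is represented by the partial isometry $V_s$ given by $V_s \delta_x = \delta_{sx}$ if $s^*sx = x$ and $V_s \delta_x = 0$ otherwise. By definition, $\ell^\infty(S) \rtimes_r S$ is the norm closure inside $B(\ell^2(S))$ of the linear span of $\{\pi(f) V_s : f \in \ell^\infty(S), s \in S\}$. For the easy inclusion $\ell^\infty(S) \rtimes_r S \subseteq \unifroealg$, since $\pi(f)$ has propagation zero it suffices to bound that of $V_s$, and indeed if $s^*sx = x$ then right subinvariance yields $d(sx, x) = d(sx, s^*sx) \leq d(s, s^*s)$, which is finite by properness. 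Hence $V_s$ has propagation at most $d(s, s^*s)$.

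For the converse, let $T \in \unifroealg$ have propagation at most $R$. Since distinct $\mathcal{L}$-classes sit at infinite distance, $T_{y,x} = 0$ unless $y \lrel x$, in which case $y^*y = x^*x$. Setting $s := yx^* \in S$ then yields $sx = y$, $s^*s = xx^*$ (so $s^*sx = x$), and $d(s, s^*s) = d(yx^*, xx^*) \leq d(y, x) \leq R$ by right subinvariance. This canonical labelling $(x, y) \mapsto yx^*$ partitions the support of $T$ into fibres indexed by $s \in S$ with $d(s, s^*s) \leq R$; each fibre yields an operator $\pi(f_s) V_s$ for a suitable $f_s \in \ell^\infty(S)$ with $\|f_s\|_\infty \leq \|T\|$. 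By bounded geometry, each column of $T$ has finitely many nonzero entries, so the formal decomposition $T = \sum_s \pi(f_s) V_s$ is columnwise finite; organising these terms via cut-offs in $\ell^\infty(S)$ and the partial-order structure of $S$ then produces norm-convergent approximations of $T$ by elements of $\ell^\infty(S) \rtimes_r S$. Once both inclusions hold, the identity map on $B(\ell^2(S))$ realizes the desired $*$-isomorphism.

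The main obstacle is organising the (potentially infinite) family of partial-isometry terms in the hard inclusion into a norm-convergent sum: unlike in the group or finitely generated case, the set $\{s \in S : d(s, s^*s) \leq R\}$ is typically infinite, so the decomposition cannot be reduced to a finite sum. Handling this requires careful bookkeeping, exploiting both bounded geometry of $(S, d)$ and the lattice structure of idempotents in $S$ (so that terms with overlapping domains can be merged via suprema of idempotents) to guarantee that the partial sums actually converge in operator norm.
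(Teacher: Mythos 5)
Your setup and the easy inclusion are fine (modulo one small point: identifying $\ell^\infty(S) \rtimes_r S$ with the concrete algebra generated by $\ell^\infty(S)$ and the $V_s$ on $\ell^2(S)$ is not "by definition" -- the reduced crossed product is defined on $\mathcal{H}\otimes\ell^2(S)$, and this identification is exactly Proposition~\ref{prop:unifroealg:action}, which itself needs an intertwining argument). The genuine gap is in the hard inclusion, and you have correctly located it but not closed it. Your canonical labelling $(x,y)\mapsto yx^*$ indexes the terms by the cylinder $\{s \in S : 0 < d(s^*s,s)\leq R\}$, and properness only guarantees that this set is \emph{finitely upper bounded}, not finite; for instance, in the inverse semigroups of Theorem~\ref{thm:wobbling-inv-sem} the canonical labels of a bounded-displacement permutation operator are infinitely many distinct rank-one maps $\gamma_{\sigma(y),y}$, each contributing a term of norm comparable to $\|T\|$, so the finite subsums of your series stay at distance $\approx \|T\|$ from $T$ and the series cannot converge in norm. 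Your proposed repair does not work: joins ("suprema") of idempotents simply do not exist in a general inverse semigroup ($E$ is only a meet-semilattice), and in any case the merging needed concerns the labels $s$, not their idempotents; moreover bounded geometry is strictly weaker than properness (Example~\ref{ex:nonfl} exhibits a right subinvariant metric of bounded geometry for which no such finite decomposition is possible), so it cannot supply the required finiteness.

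The missing idea -- which is the entire raison d'\^{e}tre of the properness condition in Definition~\ref{def:metric}~(\ref{def:metric:pr}) -- is to abandon the canonical labels and instead choose labels from the \emph{finite} set $F$ witnessing $r$-properness: for each pair $x\neq y$ with $d(x,y)\leq r$ pick $t_{x,y}\in F$ with $t_{x,y}x = y$, and write $T = \sum_{s\in F} v_s f_s$ as a genuinely finite sum, which is what the paper does. The two labellings are related through the natural partial order: $yx^* = t_{x,y}xx^* \leq t_{x,y}$, and the valid merging rule is that $s\leq m$ implies $v_s\pi(f) = v_m\pi\left(\chi_{s^*sS}\,f\right)$; thus your infinitely many canonical terms can indeed be regrouped, but only because properness (equivalently, finite upper boundedness of cylinders, Lemma~\ref{lemma:unbounded-values-grow}) provides a finite set of upper bounds -- this is precisely the "careful bookkeeping" you defer, and it cannot be extracted from bounded geometry or from the idempotent semilattice. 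Once the finite decomposition is in hand, every finite-propagation operator lies in the crossed product and the norm-closure argument finishes the proof exactly as you intend.
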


	Using this framework, we then characterize inverse semigroups with \emph{asymptotic dimension} $0$. Asymptotic dimension was introduced by Gromov as a large scale invariant notion of dimension, and is analogous to Lebesgue's covering dimension of a topological space. Of particular interest to us are those inverse semigroups $S$ such that $(S, d)$ has asymptotic dimension $0$. For countable groups, \cite{Smith} shows asymptotic dimension $0$ to be equivalent to being \emph{locally finite}, and \cite{Scar17} shows this is equivalent to its uniform Roe algebra being \emph{finite}, which \cite{LL18} shows to be equivalent to several other properties of the C*-algebra, including being \emph{quasi-diagonal} or \emph{stably finite}. 
	
	We investigate the relationship between these notions in the more general setting of inverse semigroups. In Theorem~\ref{thm:asymdim-zero} we show that like in the group case, asymptotic dimension $0$ is equivalent to local finiteness. However, when it comes to the  C*-algebraic results, the characterizations change. Asymptotic dimension $0$ corresponds to the uniform Roe algebra being local AF, or equivalently, strongly quasi-diagonal. The C*-algebraic conditions seen for groups are strictly weaker in this setting, and in Theorem~\ref{thm:boxspace} are shown to be equivalent to the metric being \emph{sparse}, and the inverse semigroup being \emph{locally $\mathcal L$-finite}.
	
  \begin{introtheorem}[see Theorem~\ref{thm:asymdim-zero}] \label{introthm:asymdim-zero}
    Let $S$ be a quasi-countable inverse semigroup equipped with its unique proper and right subinvariant metric $d$. The following statements are equivalent:
    \begin{enumerate}
      \item $S$ is locally finite.
      \item $(S, d)$ has asymptotic dimension $0$.
      \item $\unifroealg$ is local AF.
      \item $\unifroealg$ is strongly quasi-diagonal.
    \end{enumerate}
  \end{introtheorem}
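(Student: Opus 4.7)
The plan is to prove the cycle $(1) \Rightarrow (2) \Rightarrow (3) \Rightarrow (4) \Rightarrow (1)$, using the realisation of $\unifroealg$ as the reduced crossed product $\ell^\infty(S) \rtimes_r S$ from Theorem~\ref{introthm:roealg} and the coarse uniqueness of $d$ from Theorem~\ref{introthm:metric}. The group-case analogues---Smith~\cite{Smith} for $(1) \Leftrightarrow (2)$, a Winter--Zacharias style argument for $(2) \Leftrightarrow (3)$, and Scarparo~\cite{Scar17} together with Li--Liao~\cite{LL18} for the obstruction direction---serve as a template, with adjustments made for the partial nature of the inverse semigroup action.

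For $(1) \Rightarrow (2)$, I would express a locally finite quasi-countable $S$ as a directed union $\bigcup_n S_n$ of finite inverse subsemigroups, arranging $S_n$ so that it contains all products of length $\leq n$ in a generating set provided by quasi-countability. Each $S_n$ is then bounded in $d$. Using right subinvariance, any $R$-chain issuing from $s \in S$ remains in a translate $s \cdot T_R$ with $T_R = S_R$ finite, so the $R$-components of $(S, d)$ are uniformly bounded and $\asdim(S, d) = 0$. Conversely, for $(2) \Rightarrow (1)$, given a finite $F \subseteq S$, pick $R$ exceeding $\max_{f \in F} d(f, f^*f)$; then iterating right subinvariance confines $\langle F \rangle$ to finitely many $R$-components of the $\mathcal L$-classes it meets. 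By asdim $0$ each such component has diameter bounded by a uniform constant $D$, so $\langle F \rangle$ has bounded diameter and, by properness, is finite.

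The implication $(2) \Rightarrow (3)$ uses the crossed product description: a uniformly bounded $R$-disjoint cover $S = \bigsqcup_i A_i$ yields orthogonal projections $\mathbf{1}_{A_i} \in \ell^\infty(S) \subseteq \unifroealg$ with finite-dimensional corners $\mathbf{1}_{A_i} \unifroealg \mathbf{1}_{A_i}$, since only finitely many elements of $S$ can contribute matrix coefficients of propagation at most $R$ within a bounded set. Letting $R$ grow and taking suitable direct sums produces a local approximation of $\unifroealg$ by finite-dimensional $*$-subalgebras, so $\unifroealg$ is local AF. The step $(3) \Rightarrow (4)$ is immediate, as every local AF C*-algebra is strongly quasi-diagonal.

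The main obstacle is closing the cycle via $(4) \Rightarrow (1)$. The plan is contrapositive: assuming $S$ is not locally finite, I will construct a representation of $\unifroealg$ that fails to be quasi-diagonal. Following~\cite{Scar17,LL18}, failure of local finiteness yields an infinite finitely generated inverse subsemigroup $T$, and restricting to an $\mathcal L$-class meeting $T$ should produce a non-unitary isometry in a corner of $\unifroealg$---coming either from a non-torsion element of a maximal subgroup or from an infinite $\mathcal R$-class inside an $\mathcal L$-class. The resulting properly infinite corner obstructs strong quasi-diagonality, which forces stable finiteness. The delicate point is that the partial, non-cancellative nature of inverse semigroup multiplication opens several distinct sources of infiniteness (Sch\"{u}tzenberger groups, $\mathcal R$-class structure, and accumulation of idempotents), so more case analysis is needed than in the group setting, and the typical non-unitality of $\unifroealg$ means the isometry must be produced inside an appropriate unital corner.
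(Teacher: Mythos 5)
Your cycle stands or falls on the step $(4) \Rightarrow (1)$, and as planned it falls: the failure is structural, not a matter of missing detail. You propose to show that if $S$ is not locally finite, then $\unifroealg$ (or a corner of it) contains a proper isometry, i.e.\ a \emph{properly infinite corner}. This cannot work, because strong quasi-diagonality is an obstruction that lives in \emph{quotients}, not in the algebra itself. Concretely, by Theorem~\ref{thm:boxspace} the algebra $\unifroealg$ is finite, indeed stably finite, whenever $S$ is locally $\lrel$-finite, and there are locally $\lrel$-finite inverse semigroups that are not locally finite: the free monogenic inverse monoid is finitely generated and infinite, yet all its $\lrel$-classes are finite; moreover it is $\hrel$-trivial, so it has no nontrivial maximal subgroups and no infinite $\rrel$-class inside an $\lrel$-class, which kills both sources of infiniteness your case analysis relies on (its infiniteness is spread across infinitely many $\drel$-classes). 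For such $S$ a proper isometry cannot exist in $\unifroealg$, in any corner $p\unifroealg p$ (if $v \in pAp$ satisfies $v^*v = p > vv^*$ then $v + (1-p)$ is a proper isometry in $A$), or in any matrix amplification, yet $\unifroealg$ is not strongly quasi-diagonal. The paper's proof of this direction instead builds a genuine quotient: assuming $\asdim(S,d)>0$, Lemma~\ref{lemma:asymdim-one} yields arbitrarily long $r_0$-paths $X_n$ inside $\lrel$-classes $L_n$ drifting apart from each other; compressing by the projection onto $\ell^2(\bigsqcup_n L_n)$ (which commutes with $\unifroealg$ since the $L_n$ are unions of components) gives a surjection onto $C_u^*(\bigsqcup_n L_n, d)$, this is included into the uniform Roe algebra of a \emph{non-extended} re-metrization $Y$ of $\bigsqcup_n L_n$, and only after a further quotient, following \cite{LW18}, does the partial shift along the paths $X_n$ become a proper isometry. (Also, $\unifroealg$ is always unital, so your closing concern about non-unitality is moot, but unitality does not rescue the construction.)

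There are secondary gaps as well. Your direct argument for $(2) \Rightarrow (1)$ (redundant in a cycle, but stated) asserts that "iterating right subinvariance confines $\langle F\rangle$ to finitely many $R$-components"; this is unjustified, since consecutive suffixes $f_k f_{k+1}\cdots f_n$ and $f_{k+1}\cdots f_n$ of a product need not be $\lrel$-related and then sit at distance $\infty$ (e.g.\ the powers $x^n$ in the free monogenic inverse monoid lie in pairwise distinct $\drel$-classes), so elements of $\langle F\rangle$ are not joined to a fixed finite set by $R$-chains, and a priori $\langle F\rangle$ could meet infinitely many $\lrel$-classes each of small diameter. This is precisely the point where the paper needs the Sch\"utzenberger-graph counting argument of Lemma~\ref{lemma:finite-and-asymdim-zero} (bounding the number of $\drel$-classes) together with Lemma~\ref{lem:subsemigroup_inherits} (comparing the intrinsic word metric of a subsemigroup with the restricted metric). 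In $(1) \Rightarrow (2)$, writing $S=\bigcup_n S_n$ as a countable increasing union of finite subsemigroups is impossible when $S$ is uncountable (an uncountable semilattice is quasi-countable and locally finite), and such a union cannot contain the idempotents of a quasi-generating set anyway; the argument you actually need, which is the paper's, uses \emph{properness} (not right subinvariance) to get a finite $F_R$ with $y \in F_R x$ whenever $0<d(x,y)\le R$, so that $M=\langle F_R\rangle$ is finite by local finiteness and every $R$-component of $x$ lies in the \emph{left} translate $Mx$ (your right translate $s\cdot T_R$ is on the wrong side). Your $(2)\Rightarrow(3)\Rightarrow(4)$ is essentially the paper's route, which simply invokes \cite{LW18}.
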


 \begin{introtheorem} [see Theorem~\ref{thm:boxspace}]
	Let $S$ be a quasi-countable inverse semigroup equipped with its unique proper and right subinvariant metric $d$. The following statements are equivalent:
	\begin{enumerate}
		\item $S$ is locally $\lrel$-finite.
		\item $(S, d)$ is sparse.
		\item $\unifroealg$ is quasi-diagonal.
		\item $\unifroealg$ is stably finite.
		\item $\unifroealg$ is finite.
	\end{enumerate}
\end{introtheorem}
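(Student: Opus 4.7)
The plan is to close the cycle $(1)\Rightarrow(2)\Rightarrow(3)\Rightarrow(4)\Rightarrow(5)\Rightarrow(1)$, using throughout the identification of the components of $(S,d)$ with the $\mathcal{L}$-classes of $S$ provided by Theorem~\ref{introthm:metric}, together with the crossed-product description $C_u^*(S,d)\cong\ell^\infty(S)\rtimes_r S$ of Theorem~\ref{introthm:roealg}.

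The equivalence $(1)\Leftrightarrow(2)$ should be essentially a translation: by Theorem~\ref{introthm:metric} the components of $(S,d)$ are the $\mathcal{L}$-classes, distinct components are at infinite extended distance, and by properness every bounded component is finite, so local $\mathcal{L}$-finiteness amounts exactly to $(S,d)$ being a disjoint union of finite subspaces, i.e.\ sparse. For $(2)\Rightarrow(3)$, enumerate $S=\bigsqcup_n L_n$ by its $\mathcal{L}$-classes and set $P_N=\sum_{n\leq N}\mathbf{1}_{L_n}\in\ell^\infty(S)\subseteq C_u^*(S,d)$. Each $P_N$ is a finite-rank projection, and for any operator $T$ of propagation at most $R$ one has $[T,P_N]=0$ as soon as $N$ is large enough that the distance from $L_1\cup\dots\cup L_N$ to $L_{N+1}\cup L_{N+2}\cup\dots$ exceeds $R$. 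Thus $\{P_N\}$ is a quasi-central approximate unit of finite-rank projections, witnessing quasi-diagonality. The implications $(3)\Rightarrow(4)$ and $(4)\Rightarrow(5)$ are the standard facts that unital quasi-diagonal C*-algebras are stably finite, and stably finite trivially implies finite.

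The main obstacle is $(5)\Rightarrow(1)$, which I will prove by contrapositive. Assume some $\mathcal{L}$-class $L$ is infinite; the goal is to produce an infinite projection in $C_u^*(S,d)$. Since $L$ is a single component, $P_L=\mathbf{1}_L$ lies in $\ell^\infty(S)\subseteq C_u^*(S,d)$ and has propagation zero, and the corner $P_LC_u^*(S,d)P_L$ identifies canonically with $C_u^*(L,d|_L)$. It then suffices to exhibit a finite propagation partial isometry $V\in C_u^*(L,d|_L)$ with $V^*V=P_L$ and $VV^*$ strictly smaller than $P_L$. Here the inverse semigroup structure is essential: fixing $s\in L$ and $e=s^*s$ so that $L=Le$, the partial left action of $S$ produces, for a suitable $a\in S$, a partial isometry on $\ell^2(L)$ of bounded propagation implementing $t\mapsto at$. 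When $L$ is infinite, one can arrange $a$ so that this map is injective but misses an infinite subset of $L$, either by exploiting an infinite orbit of the Schützenberger group acting on $L$, or, as a fallback, by a Hall marriage argument inside the bounded-geometry space $(L,d|_L)$ in the spirit of the group-theoretic arguments recalled in the introduction. Producing this non-surjective isometry is the technical heart of the argument, and is the place where the extra symmetry carried by an $\mathcal{L}$-class, absent in a general metric space, is genuinely used.
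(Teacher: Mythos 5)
Your proposal rests on a misreading of both sides of the equivalence (1)$\Leftrightarrow$(2), and this error propagates through the whole cycle. Being locally $\lrel$-finite means that every \emph{finitely generated inverse subsemigroup} of $S$ has finite $\lrel$-classes; it does \emph{not} mean that the $\lrel$-classes of $S$ itself are finite. Likewise, sparseness (Definition~\ref{def:sparse-extended}) does not mean the components of $(S,d)$ are finite: a component is sparse when it decomposes into finite pieces $\{X_n\}$ with $d(X_n,X_m)\to\infty$ as $n+m\to\infty$, and such a component may well be infinite. The group $S=\bigoplus_{n\in\mathbb N}\mathbb Z/2\mathbb Z$ refutes your reading: it is locally finite, hence locally $\lrel$-finite and sparse, yet it is a single infinite $\lrel$-class. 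Consequently your ``translation'' for (1)$\Leftrightarrow$(2) is false as stated --- this equivalence is the nontrivial algebraic heart of the theorem, which the paper proves via the finitely generated subsemigroups $M=\langle F\rangle$ generated by properness witnesses, together with Lemmas~\ref{lemma:finite-and-asymdim-zero} and~\ref{lem:subsemigroup_inherits}. Your (2)$\Rightarrow$(3) then also fails: the projections $P_N=\sum_{n\leq N}\mathbf 1_{L_n}$ need not have finite rank, because the $\lrel$-classes $L_n$ can be infinite (and there may be uncountably many of them, so the enumeration is also suspect). A metric repair is possible, but it must use the sparse decomposition \emph{inside} each component rather than the decomposition into components; the paper instead proves (1)$\Rightarrow$(3) algebraically, building a finite-dimensional subspace from the finite $\lrel$-classes of a finitely generated subsemigroup $T=\langle F_1\cup F_2\rangle$ and checking the projection onto it commutes exactly with the operators $v_t$, $t\in T$.

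The most serious gap is (5)$\Rightarrow$(1), where you take the wrong contrapositive. The negation of local $\lrel$-finiteness is the existence of a \emph{finitely generated} inverse subsemigroup $T\subset S$ with an infinite $\lrel$-class, not the existence of an infinite $\lrel$-class of $S$. For $S=\bigoplus_{n\in\mathbb N}\mathbb Z/2\mathbb Z$ the algebra $\unifroealg$ is quasi-diagonal, hence finite, so the proper isometry you aim to construct in the corner $P_L\,\unifroealg\,P_L$ cannot exist even though $L$ is infinite: an injective non-surjective self-map of $L$ always exists set-theoretically, but it cannot be realized with finite propagation, which is precisely what your Hall-marriage/Sch\"utzenberger-orbit sketch glosses over. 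The paper's argument uses finite generation of $T$ crucially: the unique proper right subinvariant metric on $T$ is the path metric on its Sch\"utzenberger graphs, so an infinite $\lrel$-class of $T$ contains an infinite $1$-path $\{x_n\}_{n\in\mathbb N}$, whose consecutive points are also at distance at most $1$ in $(S,d)$ by Lemma~\ref{lem:subsemigroup_inherits}-type comparisons; shifting along this path and fixing everything else gives an isometry $v\in\unifroealg$ of propagation at most $1$ with $vv^*<v^*v=1$. Bounded gaps along an infinite path are exactly what an infinite $\lrel$-class of $S$ alone does not provide, so your approach would ``prove'' a false statement in the locally finite group case.
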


The interest of the latter theorems lies in the observation that quasi-diagonality, as a property of C*-algebras is, generally speaking, poorly understood. It is a very powerful tool to study the structure of a C*-algebra that imposes strong conditions, but we do not have many ways to check whether a given C*-algebra is quasi-diagonal or not. In particular, observe that the groundbreaking results of~\cite{TWW17} do not apply here, since the uniform Roe algebras we study are almost never separable.

  We end the introduction with an account of how the text is organized. Section~\ref{sec:pre} collects all the background notions needed throughout the text about inverse semigroups, coarse geometry or C*-algebras. Section~\ref{sec:metrics} then introduces when a metric is proper and right subinvariant, and proves Theorem~\ref{introthm:metric} (see Theorem~\ref{thm:metric}), and in Subsection~\ref{subsec:ex-metrics} we prove Theorem~\ref{introthm:examples} (see Theorem~\ref{thm:wobbling-inv-sem}). Section~\ref{sec:roealg} then introduces the uniform Roe algebra of $S$, and proves Theorem~\ref{introthm:roealg} (see Theorem~\ref{thm:roealg:unique}). Lastly, in Section~\ref{sec:asymdim-lf} we characterize those inverse semigroups of asymptotic dimension $0$, proving Theorem~\ref{introthm:asymdim-zero} (see Theorem~\ref{thm:asymdim-zero}).

  \textbf{Conventions:} we consider all maps to be left maps, and compose them left to right.
  Throughout the text, $S$ will be a quasi-countable inverse semigroup (unless otherwise specified), and its meet semilattice of idempotents shall be $\idempotents$. By a metric, we will always mean an extended metric. 
  
  \textbf{Acknowledgements:} we would like to thank Becky Armstrong for pointing out an issue with the construction of the reduced crossed product in Section~\ref{sec:roealg}.

  \section{Preliminaries} \label{sec:pre}
  
  In this section, we collect all the necessary background for the text. Given that our work lies in the intersection of operator algebras, large scale geometry and inverse semigroups, we provide an introduction to all three so that the paper is accessible to researchers with any of these backgrounds. 

\subsection{C*-algebras and approximation properties} \label{subsec:pre-roe}
We begin by giving a brief introduction to C*-algebras, focusing on the finiteness notions we use in the paper. We refer the reader to \cite{Murphy1990} for a comprehensive introduction to operator algebras; the finiteness properties can be found in~\cite{BO08}.

A \emph{Banach algebra} is an associative algebra which is a Banach space, that is, is equipped with a complete submultiplicative norm. A \emph{C*-algebra} is a Banach algebra $A$ over the field of complex numbers, together with an involution
$x \mapsto x^\ast$ such that
$$\begin{array}{ll}
	(x+y)^\ast=x^\ast+y^\ast;
	&(xy)^\ast=y^\ast x^\ast; \\
	(\lambda x)^\ast=\overline\lambda x^\ast;
	&\left\|xx^\ast\right\|=\left\|x\right\|^2
\end{array}$$
for all $x, y \in A$ and $\lambda \in \mathbb{C}$. The canonical example is the C*-algebra $\mathcal B(H)$ of all bounded linear operators on a Hilbert space $H$, where $x^\ast$ is the adjoint of $x$. Note that, in the finite dimensional case, these are just full matrix algebras. An important example for us is $\mathcal B(\ell^2(X))$, where $\ell^2(X)$ is the Hilbert space of square-summable functions from $X$ to $\mathbb C$.  In this text, we also are only concerned with C*-algebras which are unital.

Any finite-dimensional C*-algebra is isomorphic to some finite sum $M_{n_1} \oplus \dots \oplus M_{n_k}$ for some $n_1, \dots, n_k \in \mathbb{N}$, where $M_{n_i}$ denotes the full matrix algebra of size $n_i$-by-$n_i$. Within C*-algebra literature (see, e.g.,~\cite{BO08}) it is often interesting to see which C*-algebras behave similarly to finite-dimensional C*-algebras and which do not. Following Dedekind's definition of a finite set, we say that a unital C*-algebra $A$ is \emph{infinite} if it has a \emph{proper isometry}, that is, there is some $v \in A$ such that $v^*v = 1$ and $vv^* < 1$. Likewise, we say $A$ is \textit{finite} if it is not infinite. Lastly, we say that $A$ is \textit{stably finite} if $M_n(A)$ is finite for every $n \in \mathbb{N}$, where $M_n(A)$ denotes the C*-algebra formed by the $n$-by-$n$ matrices whose entries lie in $A$.

Even though stably finite C*-algebras behave in a similar manner to finite-dimensional ones, we shall need other stronger notions of finiteness. The following notions are well known, and their importance has recently been made explicit in relation to the so-called \textit{classification program} (see~\cite{TWW17} and the references therein).
\begin{definition} \label{def:qd}
	Let $A$ be a C*-algebra, and let $\Omega \subset \mathcal{B}(\mathcal{H})$ be a set of operators.
	\begin{enumerate}
		\item We say $\Omega$ is a \textit{quasi-diagonal set of operators} if for all finite $F \Subset \Omega$, $\varepsilon > 0$ and $v_1, \dots, v_k \in \mathcal{H}$ there is a finite rank orthogonal projection $p \in \mathcal{B}(\mathcal{H})$ such that $\left\|p\omega - \omega p\right\| \leq \varepsilon$ for all $\omega \in F$ and $\left\|pv_i - v_i\right\| \leq \varepsilon$ for all $i = 1, \dots, k$.
		\item We say $A$ is \textit{quasi-diagonal} if there is a faithful representation $\pi \colon A \rightarrow \mathcal{B}(\mathcal{H})$ whose image $\pi(A)$ is a quasi-diagonal set of operators.
		\item We say $A$ is \textit{strongly quasi-diagonal} if every quotient of $A$ is a quasi-diagonal C*-algebra.
	\end{enumerate}
\end{definition}

Lastly, we still need yet another form of finiteness within the C* literature. The following are the strongest notions of finiteness we have seen so far, as any AF algebra (or local AF algebra) is automatically quasi-diagonal.
\begin{definition} \label{def:af-algebra}
	Let $A$ be a C*-algebra.
	\begin{enumerate}
		\item We say $A$ is an \textit{AF algebra} (or \textit{AF} for short) if there is an increasing sequence of finite dimensional subalgebras $B_n \subset A$ with dense union, that is, $A$ is the norm-closure of $\cup_{n \in \mathbb{N}} B_n$.
		\item We say $A$ is \textit{locally finite-dimensional} (or \textit{local AF} for short) if for every $a_1, \dots, a_k \in A$ and $\varepsilon > 0$ there is a finite dimensional subalgebra $B \subset A$ and $b_1, \dots, b_k \in B$ such that $\left\|a_i - b_i\right\| \leq \varepsilon$ for every $i = 1, \dots, k$.
	\end{enumerate}
\end{definition}
AF algebras are always \emph{separable}, that is, they have a countable dense subset. However, the class of C*-algebras that are of interest to us, namely uniform Roe algebras, are almost never separable (as we discuss in the end of Subsection~\ref{subsec:pre-bddgeom}), and hence are almost never AF. The notion of local AF algebras intends to correct this handicap. Furthermore, note that, in general, AF algebras are local AF; local AF algebras are strongly quasi-diagonal; which implies quasi-diagonal. Likewise, quasi-diagonal C*-algebras are stably finite, and hence also finite. In particular, if a C*-algebra $A$ contains a non-unitary isometry $v \in A$, that is, $1 = v^*v > vv^*$, then $A$ cannot be quasi-diagonal (a proof of this fact can be found, for instance, in~\cite[Proposition~7.1.15]{BO08}). None of the reverse implications hold in general.

\subsection{Coarse geometry and uniform Roe algebras} \label{subsec:pre-bddgeom}
In this subsection we introduce the relevant notions concerning coarse geometry and uniform Roe algebras. A comprehensive introduction to the topic is~\cite{NY12}. As outlined in the introduction, we will be working with extended metric spaces rather than just metric spaces, and (unlike in the standard literature) all notions are introduced in this setting. Recall that an \emph{extended metric} $d$ on a set $X$ is a map $d \colon X \times X \rightarrow [0, \infty]$ that is reflexive, symmetric and satisfies the triangle inequality. In an extended metric space, being finite distance apart is an equivalence on the points, and the corresponding equivalence classes will be referred to as \emph{components}.

Since we deal with extended metrics throughout the paper, for convenience we will always tacitly assume all metric spaces to be extended unless otherwise stated. 

\begin{definition} \label{def:bddgeo}
	Let $(X, d)$ be a metric space.
	\begin{enumerate}
		\item \label{def:item:unifdisc} $(X, d)$ is \textit{uniformly discrete} if there is a uniform constant $c$ such that $0 < c < d(x, y)$ for every pair $x, y \in X$ with $x \neq y$.
		\item \label{def:item:bddgeom} $(X, d)$ is of \textit{bounded geometry} if it is uniformly discrete and $\sup_{x \in X} |B_r(x)| < \infty$ for every $r > 0$, where $B_r(x)=\{ y\in X:d(x,y)\leq r \}$.
	\end{enumerate}
	Likewise, we say $d$ is \textit{uniformly discrete} (resp. of \textit{bounded geometry}) when $(X, d)$ is uniformly discrete (resp. of bounded geometry).
\end{definition}
For example, finitely generated groups (or indeed, finitely generated inverse semigroups) are uniformly discrete (extended) metric spaces of bounded geometry with the word metric.

The following definition makes the idea of two metric spaces having the same large scale geometry precise. In Gromov's foundational work on the large scale geometry of finitely generated groups \cite{Gro93}, \emph{coarse equivalence} is one of the definitions of large scale equivalence considered. For finitely generated groups (and more generally, for quasi-geodesic spaces) the notion of being quasi-isometric is equivalent, but in general, quasi-isometry is strictly stronger, and too strong for the countably generated setting.
\begin{definition}
	Let $(X,d_X)$ and $(Y,d_Y)$ be metric spaces. A map $f \colon X\rightarrow Y$ is called a \textit{coarse embedding} if there are non-decreasing functions $\rho_-, \rho_+ \colon [0, \infty] \rightarrow [0, \infty]$ such that $\rho_+^{-1}(\infty)=\left\{\infty\right\}$, $\rho_-(r) \rightarrow \infty$ when $r \rightarrow \infty$ and
	$$ \rho_-\left(d_X\left(x, y\right)\right) \leq d_Y\left(f(x), f(y)\right) \leq \rho_+\left(d_X\left(x, y\right)\right) $$
	for all $x, y \in X$.
	
	If, moreover, there exists $k \geq 0$ such that the $k$-neighbourhood of $f(X)$ is $Y$, then $f$ is called a \textit{coarse equivalence}.
\end{definition}
Notice that the conditions ensure that any coarse embedding respects components, that is, $d_X(x,y)=\infty$ if and only if $d_Y(f(x), f(y))=\infty$. While it is not obvious from  the definition, coarse equivalence is in fact an equivalence relation.

Given a metric space $(X, d)$ of bounded geometry, we can define its \emph{uniform Roe algebra} in the following way (see~\cite{Roe03} for a more comprehensive discussion). If $t \in \mathcal{B}(\ell^2(X))$ is an operator, we say its \textit{propagation} is
$$ \prop\left(t\right) := \inf\left\{r \geq 0 \;\; \text{such that} \;\, \langle t\delta_x, \delta_y\rangle = 0 \,\; \text{whenever} \;\, d\left(x, y\right) > r \right\}, $$
that is, if we see $t$ as an $X$-by-$X$ matrix, then its entries are $0$ outside of a band of radius $\prop(t)$ around the diagonal. The \textit{uniform Roe algebra} of $(X, d)$, introduced by Roe in~\cite{Roe93}, is the C*-subalgebra $C_u^*(X, d)$ of $\mathcal{B}(\ell^2(X))$ generated by the operators of finite propagation, which form a non-closed *-subalgebra.

Note that we can consider $\ell^\infty(X)$, the set of all bounded functions $X \to \mathbb C$ as a subset of $\mathcal{B}(\ell^2(X))$, as these act linearly by pointwise multiplication on $\ell^2(X)$; furthermore they all have propagation $0$. Therefore we always have that $\ell^\infty(X)$ is a C*-subalgebra of $C_u^*(X, d)$ and, hence, since $\ell^\infty(X)$ is non-separable as soon as $X$ is infinite, so will $C_u^*(X, d)$ be non-separable.

Lastly, recall that $\unifroealg$ is a bijective coarse invariant of the space. Indeed, if $\varphi \colon (X, d) \rightarrow (Y, d')$ is a bijective coarse equivalence then $C_u^*(X, d) \cong C_u^*(Y, d')$. In order to prove this, note that we may define a unitary operator $u \colon \ell^2(X) \rightarrow \ell^2(Y)$ by $u\delta_x := \delta_{\varphi(x)}$. It is then routine to show that the map $a \mapsto uau^*$ defines a *-isomorphism between $C_u(X, d)$ and $C_u^*(Y, d')$.

  \subsection{Inverse semigroups} \label{subsec:pre-invsem}
  In this section, we give a quick overview of inverse semigroups. We refer the reader to \cite{L98} for a comprehensive introduction, or to \cite{lawpr} for a more concise one.
  
  A semigroup $S$ is said to be inverse if each element $s\in S$ admits a unique inverse $s^\ast \in S$ with the properties
  $$ss^\ast s=s \;\, \text{and} \;\, s^\ast s s^\ast=s^\ast.$$ 
  Groups, in particular, are inverse semigroups, but in an inverse semigroup, $ss^\ast$ and $s^\ast s$ are typically not identity elements, and are typically distinct from each other. We do however have $(s^\ast)^\ast=s$ and $(st)^\ast=t^\ast s^\ast$, which shows similarity with the $\ast$-operation in C*-algebras\footnote{In the inverse semigroup literature, the inverse is usually denoted by $^{-1}$. The $\ast$ notation is more common where inverse semigroups are used to generate C*-algebras, as in these constructions, the inverse in the semigroup does correspond to the * operation in the algebra.}. 
  Semilattices, that is, commutative semigroups where every element is an idempotent, are also inverse semigroups, with every element being its own inverse.
  
  The canonical example of an inverse semigroup is the symmetric inverse semigroup $\mathcal I_X$ on a set $X$: this consists of all bijections between subsets of $X$ (including the empty map), which forms a semigroup under the usual composition of partial maps (i.e., composing on the largest possible domain), and the inverse of a map is just the usual inverse mapping. In particular, if $\varphi \in \mathcal I_X$ then $\varphi^\ast\varphi$ is the identity map on the domain of $\varphi$, whereas $\varphi\varphi^\ast$ is the identity map on its range (we work with left maps in the paper). The common cardinality of the domain and range of $\varphi$ is called its \emph{rank}.
  
  Analogously to groups, each inverse semigroup can be represented in a symmetric inverse semigroup. The \emph{(left) Wagner-Preston representation of $S$} maps $s$ to $\lambda_s \in \mathcal I_S$, where $\lambda_s \colon s^*S \to s S, x \mapsto s x$. This also allows us to represent $S$ in the $\ast$-algebra $\mathcal B(\ell^2(S))$ of bounded linear operators on $\ell^2(S)$:
\begin{equation}
\label{eq:WPonl2}
v_s \colon \ell^2(S) \to \ell^2(S),\ v_s (f)(t)=
  \begin{cases}
  	f(\lambda_{s^*}(t))&\hbox{if }t \in sS,\\
  	0&\hbox{if } t \notin sS.
  \end{cases}
\end{equation}  
    
  An \emph{inverse monoid} is an inverse semigroup that has an identity element. For example, the symmetric inverse semigroup is also an inverse monoid with the identity map. Not all inverse semigroups are monoids; however, any inverse semigroup $S$ can be turned into an inverse monoid $S^1$ by adjoining an external identity, i.e., $S^1=S \sqcup \{1\}$ where $1$ acts as an identity element, and multiplication is otherwise inherited from $S$. This allows us to work with inverse monoids rather than just semigroups when convenient, without a loss of generality.
  
  An \emph{inverse subsemigroup} $T$ of an inverse semigroup $S$ is a subset of $S$ closed under multiplication and taking inverses. If $S$ is an inverse monoid, then $T$ is an \emph{inverse submonoid} if it is an inverse subsemigroup and it contains the identity element of $S$. For example, if $|X|=n$, then the rank $n$ maps form an inverse submonoid of $\mathcal I_X$: the symmetric group on $X$. The notions of inverse subsemigroups and inverse submonoids \emph{generated} by subsets of $S$ are defined as usual. 
  
  An element $e \in S$ is called idempotent if $e^2=e$, or equivalently, if $e^\ast=e$. The set of idempotents plays an important role in the algebraic theory of inverse semigroups, and is usually denoted by $E$. In any inverse semigroup, idempotents commute with each other and thus form an inverse subsemigroup. Since multiplication in $E$ is idempotent and commutative, it forms a semilattice, and considering multiplication as a meet operation we have the corresponding partial order given by $e \leq f$ if $ef=e$. This partial order extends naturally to the whole semigroup $S$ by defining $s \leq t$ if $ss^\ast t=s$, or equivalently, $ts^\ast s=s$, and is called the \emph{natural partial order} on $S$. An equivalent characterization is that $s \leq t$ if there exists an idempotent $e$ with $s=et$, or equivalently, if there exists an idempotent $f$ with $s=tf$.
  In the symmetric inverse semigroup $\mathcal I_X$, the idempotents are the identity maps on subsets of $X$, and the natural partial order just corresponds to restriction of the domain. An important property of the natural partial order is that it is compatible with the operations, i.e., if $s_1 \leq t_1$ and $s_2 \leq t_2$, then $s_1s_2 \leq t_1 t_2$ and $s_1 ^\ast \leq t_1^\ast$.
 
  In order to understand the structure of an inverse semigroup -- or indeed, semigroups in general -- it is key to know which elements can be multiplied into one another. This is described by the so-called \emph{Green's relations}. We say $s$ and $t$ are \emph{$\lrel$-related} if they can be mutually multiplied into each other on the left, that is, if there are elements $u,v \in S$ with $us=t$ and $vt=s$. There are several equivalent characterizations which are useful to know, such as: $s$ and $t$ generate the same \emph{left-ideal}, that is, $Ss=St$; or, equivalently, $s^\ast s=t^\ast t$. In particular, $s \lrel s^\ast s$ for any $s \in S$, and this is the unique idempotent in the $\lrel$-class of $s$, on which it acts as a left identity. It also follows that elements of the $\lrel$-class are pairwise incomparable in the natural partial order.
  
  The \emph{$\rrel$-relation} is analogously defined on the right hand side. The smallest equivalence containing both $\lrel$ and $\rrel$ is called the \emph{$\drel$-relation}. It can be proven that $s \drel t$ if and only if there exists $u \in S$ with $s \lrel u \rrel t$, or equivalently, there exists $v \in S$ with $s \rrel v \lrel t$. In a symmetric inverse semigroup $\mathcal I_X$, two maps $\varphi$ and $\psi$ are $\lrel$-related if they have the same domain, and similarly, they are $\rrel$-related if they have the same image, and $\drel$-related if they have the same rank. 
  
  We denote the $\lrel$-, $\rrel$-, and $\drel$-class of $s \in S$ by $L_s, R_s$ and $D_s$ respectively. We remark that there are two more Green's relations we have not mentioned as we will not be using them in the paper.
  


  Given an inverse semigroup $S$ generated by $X$ -- in notation, $S=\langle X \rangle$, we define the (left) Cayley graph of $S$ as the edge-labeled digraph with vertex set $S$ and edges of the form $s \xrightarrow{x} xs$ where $x \in X \cup X^\ast$, and $X^*=\{y^\ast \colon y \in X\}$.
  Cayley graphs of inverse semigroups are typically not strongly connected, indeed, note that there is a direct path $s \to t$ if and only if there exists $u\in S$ with $t=us$. This shows that the strong components are exactly the $\lrel$-classes of the inverse semigroup. The strong component of $s$ is called the \emph{Sch\"utzenberger graph} of $s$, and it follows from the work of Stephen \cite{S90} that the full Cayley graph of an inverse semigroup can be recovered from just the set of Sch\"utzenberger graphs (as edge-labeled graphs).
  Furthermore, within the  Sch\"utzenberger graphs, edges do come in inverse pairs:  $s \lrel xs$ if and only if $s=x^\ast xs$, that is, $x$ and $x^*$ are opposite edges between $s$ and $xs$.
  As first observed in \cite{GSSz} and \cite{LM21}, finitely generated inverse semigroups are therefore naturally equipped with a \emph{word metric} via the path metric within the Sch\"utzenberger graphs, by defining $d(s,t)$ to be infinite if $s \not\lrel t$, and the distance of $s$ and $t$ in their (common) Sch\"utzenberger graph if $s \lrel t$. 
  
  Unlike in the case of groups, this word metric is a meaningful metric in some non-finitely generated, even non-countable semigroups. Observe that if $e \in X$ is an idempotent and $s \lrel es$, then $s=e^*es=es$, that is, idempotents label loops within the Sch\"utzenberger graphs, and therefore they do not influence the metric. Thus, for our purposes it makes sense to consider the following generalization of generation: 

  \begin{definition} \label{def:cnt-quasi-gen}
    We say an inverse semigroup $S$ is \emph{quasi-generated} by a set $X$ if $S=\langle X \cup E \rangle$, where $E \subset S$ is the set of idempotents of $S$. We call $S$ \textit{quasi-countable} if it is quasi-generated by a countable set.
  \end{definition}
Note that this does not imply $S$ itself is countable -- any semilattice is quasi-countable by definition.

We remark that \cite{GSSz} is only concerned with finitely generated inverse semigroups, while \cite{LM21} considers the word metric in a larger class they call \emph{finitely labelable}, which in turn is a subclass of finitely quasi-generated inverse semigroups. In Section \ref{sec:metrics}, we generalize the word metric to any quasi-countable inverse semigroup.


A nice class of inverse semigroups we will often appeal to for examples are the direct products of semilattices and groups. In general, the direct product of inverse semigroups is again an inverse semigroup, so in particular, given a semilattice $E$ and a group $G$, the set $S=G \times E$ is an inverse semigroup with $(g,e)^\ast=(g^{-1}, e)$ for $g\in G$ and $e\in E$. In particular $(g,e)^\ast (g,e)=(g,e)(g,e)^\ast=(1,e)$, where $1$ is the identity element of the group. Thus here the $\lrel$, $\rrel$ and $\drel$ relations all coincide with having the same second component, and all Sch\"utzenberger graphs are isomorphic to the Cayley graph of $G$. Notice that $S$ is quasi-generated by $G$, or indeed by any generating set of $G$, so $S$ is finitely (countably) quasi-generated whenever $G$ is.

  \section{Proper and right subinvariant metrics on inverse semigroups} \label{sec:metrics}
  This section is dedicated to defining a coarse invariant metric on quasi-countable inverse semigroups. As in the finitely generated case, the metric we consider will always be a uniformly discrete, extended metric whose components are exactly the $\lrel$-classes. By a metric on an inverse semigroup, we will always mean such a metric, even when not explicitly stated. 
  
  In Subsection~\ref{subsec:metrics}, we define the adequate notions of \textit{properness} and \textit{right-invariance}. Subsections~\ref{subsec:exis-metric} and~\ref{subsec:uniq-metric} then show that such metrics always exist and, up to bijective coarse equivalence, depend on the inverse semigroup alone. Subsection~\ref{subsec:ex-metrics} shows that any uniformly discrete metric space of bounded geometry arises as a component of an inverse semigroup. Lastly, Subsection~\ref{groupoid-metric} describes the relationship between length functions on inverse semigroups and their universal groupoids.

  \subsection{Proper and right subinvariant metrics} \label{subsec:metrics}
  
    \begin{definition} \label{def:metric}
    Let $S$ be a quasi-countable inverse semigroup, and let $d \colon S \times S \rightarrow [0, \infty]$ be a metric (which is, as always, uniformly discrete and the components are the  $\lrel$-classes).
    \begin{enumerate}
      \item \label{def:metric:ri} We say $d$ is \textit{right subinvariant} if $d(sx, tx) \leq d(s, t)$ for every $s, t, x \in S$.
      \item \label{def:metric:pr} We say $d$ is \textit{proper} if for every $r \geq 0$ there is some finite set $F \Subset S$ such that $y \in Fx$ for every pair $x, y \in S$ with $x \neq y$ and $d(x, y) \leq r$.
    \end{enumerate}
  \end{definition}
  
\begin{remark}
In condition (\ref{def:metric:ri}) we may, without loss of generality, assume that $d(s,t) < \infty$, that is, $s \lrel t$, as the statement is automatic for pairs with $d(s,t)=\infty$. We will take advantage of this to shorten proofs.
\end{remark}

\begin{remark} \label{rem:left-invariance}
	There is an obvious one-sidedness to our definitions in that we chose \textit{right} subinvariance  as opposed to \emph{left} subinvariance, and $\lrel$-classes rather than $\rrel$-classes. This all stems from the fact that the variant of the Wagner-Preston representation we use to construct the crossed product algebra represents $S$ as \emph{left} maps rather than right maps, as usual in operator theory. However, this is only a convention, and the dual of the paper also holds; in fact the map $s \mapsto s^*$ is an anti-homomorphism on inverse semigroups which swaps the sides.
\end{remark}

\begin{example}[The word metric in finitely quasi-generated inverse semigroups]\label{ex:fg-word-metric}
To reassure the reader that this section is not about the empty set, we begin by observing that for finitely quasi-generated inverse semigroups, the word metric satisfies the above conditions, and thus Definition \ref{def:metric} generalizes the metric defined in \cite{GSSz} and \cite{LM21} as claimed. 
Indeed, consider the strong components of the Cayley graph with respect to the generating set $X \cup E$. These components are the $\lrel$-classes, and if $s \lrel t$, $s \neq t$, then $d(s,t)$ is the length of a minimum length path from $s$ to $t$. Since idempotents label loops, a minimum length path will only have labels coming from $X\cup X^\ast$, so
$$
	d\left(s,t\right) := \min \left\{k\mid  x_1 \ldots x_k t=s,\ x_i \in X \cup X^\ast \right\}. $$
This is right subinvariant, since $x_1 \ldots x_k t=s$ implies $x_1 \ldots x_k tx=sx$ for any $x \in S$.
To see properness, take $r> 0$, and let $F=\{x_1\ldots x_k: x_i \in X \cup X^\ast, 1\leq k \leq r\}$. Then if $d\left(s,t\right) \leq r$, then either $s=t$ or $x_1 \ldots x_k t=s$ for some $1 \leq k \leq r$ and $x_i \in X \cup X^\ast$, that is, $s \in Ft$ indeed.
\end{example}

Notice properness could fail if we did not assume $x \neq y$ in the definition, as in general there may not be closed paths with labels in $X \cup X^\ast$ around each point. Indeed, if we choose $S$ to be the semilattice $(\mathbb N, \min)$, then $S$ is quasi-generated by the empty set, but for any finite subset $F$ if $m > \max F$ then $m \notin F n$ for any $n \in \mathbb N$. While our definition of properness is similar in flavour to the finite labelability condition of \cite{LM21}, $(\mathbb N, \min)$ is not finitely labelable for this very reason.

Since the word metric is our prototype for Definition~\ref{def:metric}, we record a few of its properties here which proper, right subinvariant metrics will generalize. The following claims follow from \cite{S90}.

\begin{proposition}\label{prop:wordmetric-prop}
	Let $S$ be an inverse semigroup.
	\begin{enumerate}
		\item\label{prop:wordmetric-prop:contract} If $t \lrel st$, then the map $L_s \to L_t$, where $x \mapsto xt$, is an edge-labeled graph morphism with $s^\ast s \mapsto t$ and $s \mapsto st$. In particular if $S$ is finitely quasi-generated and $d$ is the word metric, then $d(s^\ast s,s) \geq d(t,st)$.
		\item\label{prop:wordmetric-prop:d} Any two Sch\"utzenberger graphs contained in the same $\drel$-class are isomorphic as edge-labeled digraphs.
	\end{enumerate}
\end{proposition}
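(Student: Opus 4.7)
For both parts of the proposition, the strategy is to define a map between Sch\"utzenberger graphs via right-multiplication by a carefully chosen element, and then to verify that this map preserves the edge-labeled graph structure. The distinguished point of the Sch\"utzenberger graph $L_s$ is its unique idempotent $s^*s$, so the key algebraic question in both parts is where this basepoint gets sent.

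For (1), the plan is to first establish that $t \lrel st$ forces the identity $s^*st = t$. Using the standard characterization $t \lrel st$ iff $t^*t = t^*s^*st$, multiplying on the left by $t$ and on the right by $t^*$ yields $tt^*s^*stt^* = tt^*$; commutativity of idempotents then reduces this to $s^*s \cdot tt^* = tt^*$, which is equivalent to $s^*st = t$ by the standard fact that $et = t$ for an idempotent $e$ iff $tt^* \leq e$. With this in hand, the map $\phi \colon L_s \to L_t$, $x \mapsto xt$, is well-defined because any $x \lrel s$ has $x^*x = s^*s$, so $(xt)^*(xt) = t^*s^*st = t^*t$, that is, $xt \lrel t$. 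Since $\phi$ commutes with left-multiplication (and hence with edges labeled by $y \in X \cup X^*$), it is an edge-labeled graph morphism, and by the above it sends $s^*s \mapsto t$ and $s \mapsto st$. The distance bound $d(s^*s,s) \geq d(t,st)$ is immediate: the $\phi$-image of a shortest labeled path from $s^*s$ to $s$ is a labeled path of the same length from $t$ to $st$.

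For (2), pick $u \in S$ with $s \lrel u \rrel t$. Since $\lrel$-equivalent elements share the same Sch\"utzenberger graph, it suffices to construct an edge-labeled graph isomorphism $L_u \to L_t$. From $u \rrel t$ we have $uu^* = tt^*$, so setting $v := u^*t$ yields $uv = t$, $tv^* = u$, $vv^* = u^*u$, and $v^*v = t^*t$. The proposed isomorphism is $\psi(x) := xv$, with candidate inverse $\psi^{-1}(y) := yv^*$. The inclusion $\psi(L_u) \subseteq L_t$ follows from $(xv)^*(xv) = v^*(u^*u)v = (uv)^*(uv) = t^*t$ for $x \in L_u$, and dually for $\psi^{-1}$; the identity $xvv^* = x(u^*u) = x$ (valid because $x^*x = u^*u$), together with its mirror, shows $\psi$ and $\psi^{-1}$ are mutual inverses. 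Both respect left-multiplication, so they give the desired edge-labeled isomorphism.

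The principal technical obstacle is the idempotent manipulation in the key lemma for part (1), namely showing that $t \lrel st$ forces $tt^* \leq s^*s$, which is precisely what makes the basepoint $s^*s$ of $L_s$ map to the distinguished element $t$ of $L_t$. Once this is in place, and the correct translating element is identified ($t$ for part (1), $v = u^*t$ for part (2)), the rest of both proofs reduces to a mechanical verification using the standard inverse-semigroup identities $x = xx^*x$, $(xy)^* = y^*x^*$, and the commutativity of idempotents.
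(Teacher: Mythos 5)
Your proof is correct and complete. Note that the paper does not actually prove Proposition~\ref{prop:wordmetric-prop}: it simply cites Stephen~\cite{S90}, so your argument supplies details that the paper leaves to a reference. What you do is essentially the standard argument underlying that citation: right translations commute with the left multiplications that define the edges of Sch\"utzenberger graphs, hence are automatically label-preserving graph morphisms, and all the real content is the algebra of choosing the translating element and locating the image of the basepoint $s^*s$. Two remarks on the details. First, your key lemma for (1) --- that $t \lrel st$ forces $s^*st = t$, equivalently $tt^* \leq s^*s$ --- is exactly the fact the paper later invokes without proof in Lemma~\ref{lemma:metric-right-inv}(\ref{lemma:metric-right-inv:contract}); your derivation is correct, and can even be shortened: from $t^*t = t^*s^*st$ one gets $t = tt^*t = (tt^*)(s^*s)t = (s^*s)(tt^*)t = s^*st$ directly. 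Second, your treatment of (2), with the single translating element $v = u^*t$ and the verified identities $uv=t$, $tv^*=u$, $vv^*=u^*u$, $v^*v=t^*t$, is in fact tighter than the paper's own sketch of the metric analogue, Lemma~\ref{lemma:metric-right-inv}(\ref{lemma:metric-right-inv:q}): there, from $t \rrel s_2$ the paper concludes $L_2 = L_{t^*}$, although $t^* \lrel s_2^*$ only gives $L_{t^*} = L_{s_2^*}$, so strictly one needs to compose two right translations (or choose the intermediate element relative to $s_2^*$) --- which is exactly what your single element $v = u^*t$ accomplishes in one step.
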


Note that if $1 \not\lrel s$, then $d(1,s)=\infty$, however  $d(s^\ast s,s)$ is always finite as $s^*s \lrel s$, furthermore $s(s^*s)=s$. In light of item (\ref{prop:wordmetric-prop:contract}),  $d(s^\ast s,s)$ can be considered as the analogue of the length of $s$ in groups, and will later be introduced as such.

In the following, we present a few more examples to show why some natural, weaker alternatives to right subinvariance and properness are too weak to guarantee a coarse unique metric.

 \begin{example}[Invariance under the Wagner-Preston action is too weak] \label{ex:need-subinvariance}
	We remarked in the introduction that we cannot in general replace right subinvariance with right invariance, which most obviously fails in the case when $S$ has a $0$. However, from an inverse semigroup theory point of view, an even more natural question is whether we can replace right subinvariance with being invariant under the natural right Wagner-Preston action of $S$ on itself. 
	
	It turns out this is not strong enough to guarantee uniqueness up to bijective coarse equivalence. For a counterexample, consider the direct product $S$ of any finitely generated group $G$ and the semilattice $(\mathbb Z^-, \min)$. (This is an infinite descending chain of copies of $G$.) Consider the word metric $d_G$ in $G$, and define the following two metrics on $S$:
	
	\begin{align}
		d_1\left(\left(g, -n\right), \left(h, -m\right)\right) & :=
		\left\{
		\begin{array}{rl}
			d_G\left(g, h\right) & \text{if} \;\; n = m, \\
			\infty & \hbox{otherwise},
		\end{array}
		\right. \nonumber \\
		d_2\left(\left(g, -n\right), \left(h, -m\right)\right) & :=
		\left\{
		\begin{array}{rl}
			n \cdot d_G\left(g, h\right) & \text{if} \;\; n = m, \\
			\infty & \hbox{otherwise},
		\end{array}
		\right. \nonumber
	\end{align}

 \begin{figure}[htbp]
	\begin{center}
		\begin{tikzpicture}[scale=2]
			\foreach \i in {1,...,4}{
				\draw[thick] (0,-0.7*\i) node{$\scriptstyle\bullet$} -- node[above]{$1$} (1,-0.7*\i) node{$\scriptstyle\bullet$};
				\draw[thick] (4-\i/2,-0.7*\i) node{$\scriptstyle\bullet$} -- node[above]{$\i$} (4+\i/2,-0.7*\i) node{$\scriptstyle\bullet$};
			}
			\node at (0.5, -3.2) {$\vdots$};
			\node at (4, -3.2) {$\vdots$};
		\end{tikzpicture}
	\end{center}
	\caption{The two distances $d_1$ and $d_2$ when $G=\mathbb Z_2$ in Example \ref{ex:need-subinvariance}}
\end{figure}
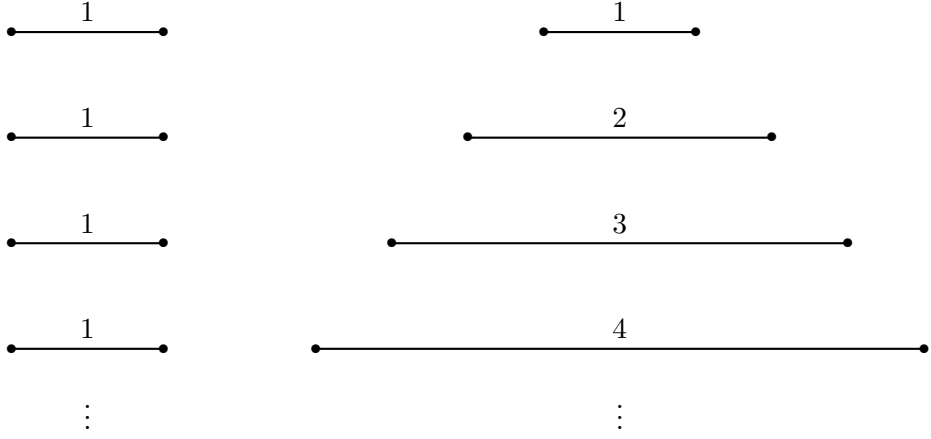

	First, notice that these two metrics are not bijectively coarse equivalent -- there is no way to bound $d_2$ by a function of $d_1$. However, both are invariant under the right Wagner-Preston action. Indeed, the action of $s\in S$ is given by $\rho_s: Ss^\ast \to Ss, x \mapsto xs$. In this case, if $s=(g,-n)$ then $Ss^\ast=\{(h,-m): -m\leq -n\}$, and $(h,-m)\mapsto (hg,\min(-n,-m))=(hg,-m)$, so the Wagner-Preston maps stabilize each group and are exactly the right Cayley maps groupwise. Clearly, both metrics are right invariant under these. (Note however that $d_2$ is not right subinvariant, while $d_1$ is.)
	Furthermore, both metrics are proper: for $r\geq 0$ one can choose $F$ to be the $r$-neighborhood of $(1,-1)$, the identity of the maximal group. Then, if $r \geq d_i\left(\left(g, -n\right), \left(h, -n\right)\right)$ for either metric, then $r\geq d_G(g,h)=d_G(1,hg^{-1})=d_i\left(\left(1, -1\right), \left(hg^{-1}, -1\right)\right)$, so $\left(hg^{-1}, -1\right) \in F$ and $(h,-n)=\left(hg^{-1}, -1\right)(g,-n)$. 
		
\end{example}

The properness condition is the more technical and perhaps more mysterious one of the two in Definition~\ref{def:metric}. As we are about to see, it is strictly stronger than having bounded geometry, and this stronger condition is indeed necessary if we want to end up with a coarse unique metric.

  \begin{lemma} \label{lemma:bddgeom}
    Let $S$ be an inverse semigroup, and let $d$ be a right subinvariant metric. If $d$ is proper then the extended metric space $(S, d)$ is of bounded geometry.
  \end{lemma}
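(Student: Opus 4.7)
My plan is to derive bounded geometry directly from the properness condition, observing that uniform discreteness is already built into our standing assumptions on the metric. The key idea is that the finite set $F$ provided by properness gives a uniform upper bound on ball cardinalities, because every point within distance $r$ of $x$ (other than $x$ itself) must be a left translate of $x$ by an element of $F$.

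Concretely, fix $r \geq 0$ and apply Definition~\ref{def:metric}(\ref{def:metric:pr}) to obtain a finite set $F \Subset S$ such that whenever $x \neq y$ and $d(x,y) \leq r$ we have $y \in Fx$. Then for any $x \in S$, the ball $B_r(x) = \{y \in S : d(x,y) \leq r\}$ satisfies
$$ B_r(x) \subseteq \{x\} \cup Fx, $$
so in particular $|B_r(x)| \leq |F| + 1$. This bound is uniform in $x$, which is precisely the cardinality condition in Definition~\ref{def:bddgeo}(\ref{def:item:bddgeom}). Since we already take all metrics in this paper to be uniformly discrete (as recorded at the beginning of Subsection~\ref{subsec:metrics}), this gives bounded geometry.

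There is essentially no obstacle here — the statement is a one-line consequence of how properness was formulated. I would briefly note in the proof that right subinvariance is not actually used in the argument; it appears in the hypothesis only because that is the standing context of the section. The substantive content lies in the definition itself: properness was designed precisely so that finitely many left multipliers suffice to reach every nearby point, which is strictly stronger than bounded geometry (as Example~\ref{ex:need-subinvariance} and the surrounding discussion suggest will be relevant for uniqueness up to bijective coarse equivalence later on).
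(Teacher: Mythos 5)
Your proof is correct and is essentially identical to the paper's: both observe that $B_r(x) \setminus \{x\} \subseteq Fx$ for the finite set $F$ witnessing properness, giving the uniform bound $|B_r(x)| \leq |F| + 1$, with uniform discreteness coming from the standing assumptions on the metric. Your side remark that right subinvariance is never used is also accurate, as the paper's own argument does not invoke it either.
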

  \begin{proof}
    Observe that if $d$ is proper then $B_r(x) \setminus \{x\} \subset F x$, where $F$ witnesses the properness of $d$ for $r$ (see Definition~\ref{def:metric}~(\ref{def:metric:pr})). Therefore $\sup_{x \in S} |B_r(x)| \leq \sup_{x \in S} |Fx| + 1 \leq |F| + 1 < \infty$, which proves that $(S, d)$ is of bounded geometry.
  \end{proof}
  
  The converse of Lemma~\ref{lemma:bddgeom} does not hold in general.

\begin{example}[Bounded geometry is too weak] \label{ex:nonfl}
 Let $G$ be any non-trivial finitely generated group, and let $S$ be the direct product of $G$ and the semilattice $(\mathbb N, \min)$. Note that $S$ is not finitely quasi-generated, as it contains an unbounded chain, but it is quasi-countable, as it itself is countable. Let $d_G$ be the word metric on the group $G$, and consider the metrics
    \begin{align}
      d_1\left(\left(g, n\right), \left(h, m\right)\right) & :=
        \left\{
          \begin{array}{rl}
            d_G\left(g, h\right) & \text{if} \;\; n = m, \\
            \infty & \textit{otherwise},
          \end{array}
        \right. \nonumber \\
      d_2\left(\left(g, n\right), \left(h, m\right)\right) & :=
      \left\{
        \begin{array}{rl}
          n \cdot d_G\left(g, h\right) & \text{if} \;\; n = m, \\
          \infty & \textit{otherwise},
        \end{array}
      \right. \nonumber
    \end{align}

 \begin{figure}[htbp]
	\begin{center}
		\begin{tikzpicture}[scale=2]
			\foreach \i in {1,...,4}{
			\draw[thick] (0,0.7*\i) node{$\scriptstyle\bullet$} -- node[above]{$1$} (1,0.7*\i) node{$\scriptstyle\bullet$};
			\draw[thick] (4-\i/2,0.7*\i) node{$\scriptstyle\bullet$} -- node[above]{$\i$} (4+\i/2,0.7*\i) node{$\scriptstyle\bullet$};
		}
			\node at (0.5, 3.5) {$\vdots$};
			\node at (4, 3.5) {$\vdots$};
		\end{tikzpicture}
	\end{center}
	\caption{The two distances $d_1$ and $d_2$ when $G=\mathbb Z_2$ in Example \ref{ex:nonfl}}
\end{figure}
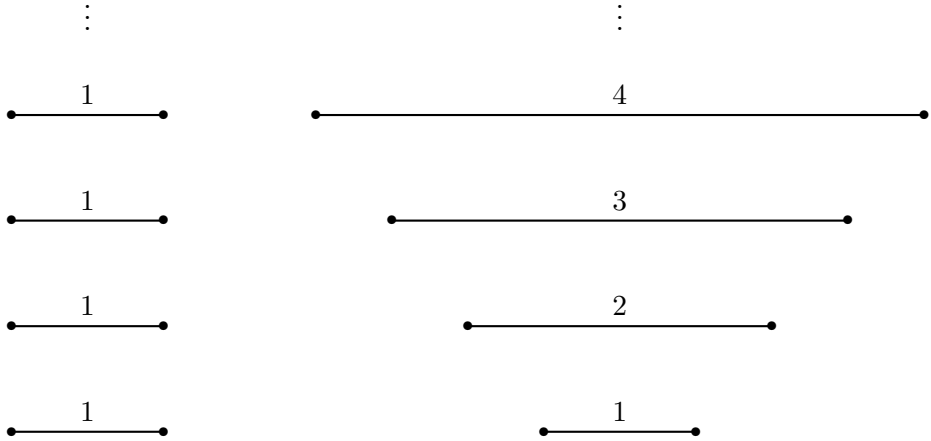

    Both metrics are right subinvariant: 
    
        \begin{eqnarray*}
    d_1\left(\left(g, n\right)(k,m), \left(h, n\right)(k,m)\right)&=&d_1\left(\left(gk, \min(m,n)\right), \left(hk, \min(m,n)\right)\right)\\
    &=&d_G(gk,hk)=d_G(g,h)= d_1\left(\left(g, n\right), \left(h, n\right)\right)
    \end{eqnarray*}
    
    \begin{eqnarray*}
    d_2\left(\left(g, n\right)(k,m), \left(h, n\right)(k,m)\right)&=&d_2\left(\left(gk, \min(m,n)\right), \left(hk, \min(m,n)\right)\right)\\
    &=&\min(m,n) \cdot d_G(gk,hk)=\min(m,n) \cdot d_G(g,h)\\
    &\leq & n \cdot d_G(g,h)= d_2\left(\left(g, n\right), \left(h, n\right)\right)
    \end{eqnarray*}

    Also, both metrics are of bounded geometry, however similarly to Example~\ref{ex:need-subinvariance} they are not coarsely equivalent as $d_2$ cannot be bounded by a function of $d_1$.
    However, $d_1$ is not proper, while $d_2$ is. Indeed, observe any finite set $F \Subset S$ is contained in $G \times \{1, \ldots, n\}$ for some $n$, and the set $G \times \{1, \ldots, n\}$ is an ideal, so we have $Fs \subseteq G \times \{1, \ldots, n\}$ for any $s \in S$. This shows that $d_1$ cannot be proper. However in the case of $d_2$, the ball $B_r(g,n)$ only contains elements other than $(g,n)$ if $n \leq r$, so choosing $F=B_r(1,r)$ will suffice by the same argument as seen in Example \ref{ex:need-subinvariance}. 
  \end{example}  
  
  For groups, properness in the sense of Definition~\ref{def:metric} is equivalent to uniformly bounded geometry, and thus properness in the group sense: indeed, given $r \geq 0$, taking $F=B_r(1)$ where $1$ is the identity element of the group will suffice. Right subinvariance is also equivalent to right invariance given $d(g,h)\geq d(gx,hx) \geq d(gxx^{-1}, hxx^{-1})$ implies $d(g,h)= d(gx,hx)$. This shows that Definition~\ref{def:metric} does indeed extend the definition used for groups.

  The following lemma contains a few simple observations about the components of the metric.
  
  \begin{lemma} \label{lemma:resp-lclasses}
    Let $d$ be a metric on $S$ with $\lrel$-classes as components. Then the following hold:
    \begin{enumerate}
      \item \label{lemma:resp-lclasses:group} $S$ is a group if and only if $(S, d)$ is a non-extended metric space. This, in turn, is equivalent to $S$ having precisely one idempotent.
      \item \label{lemma:resp-lclasses:lattice} $S$ is a semilattice if and only if $d(x, y) = \infty$ whenever $x, y \in S$ and $x \neq y$.
      \item \label{lemma:resp-lclasses:ce-latt} $(S, d)$ is coarsely equivalent to $(E, d)$ if and only if each component of $S$ has diameter at most $c$ for some $c\geq 0$.
    \end{enumerate}
  \end{lemma}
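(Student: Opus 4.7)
The plan is to prove all three items by leveraging a single structural observation: since the components of $(S,d)$ are the $\lrel$-classes and each $\lrel$-class $L_s$ contains a unique idempotent (namely $s^\ast s$), the set $\idempotents$ meets each component in exactly one point. In particular, viewed as a subspace of $(S,d)$, the metric space $(\idempotents, d)$ has all components being singletons, i.e., $d(e,f) = \infty$ whenever $e \neq f$ are distinct idempotents.

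For item (\ref{lemma:resp-lclasses:group}), observe that $(S,d)$ being non-extended is equivalent to having a single component, which by the observation above is equivalent to $|\idempotents| = 1$. If $S$ has a unique idempotent $e$, then $s^\ast s = ss^\ast = e$ for every $s \in S$, so $e$ is a two-sided identity and $s^\ast$ is a two-sided inverse of $s$, making $S$ a group. The converse is immediate since the identity of a group is its only idempotent.

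For item (\ref{lemma:resp-lclasses:lattice}), note that $S$ is a semilattice exactly when every element is idempotent, i.e., $S = \idempotents$. This is equivalent to every $\lrel$-class being a singleton: indeed, if each $L_s = \{s\}$ then $s = s^\ast s \in \idempotents$ for all $s$, whereas the converse follows from the fact that distinct idempotents lie in distinct $\lrel$-classes. Translating this through the component structure yields the condition $d(x,y) = \infty$ for $x \neq y$.

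For item (\ref{lemma:resp-lclasses:ce-latt}), the direction ($\Leftarrow$) follows by observing that the inclusion $\iota \colon \idempotents \hookrightarrow S$ is trivially an isometric embedding, and if every component of $S$ has diameter at most $c$, then for any $s \in S$ the element $s^\ast s \in \idempotents$ satisfies $d(s, s^\ast s) \leq c$, so $\idempotents$ is $c$-dense, making $\iota$ a coarse equivalence. For ($\Rightarrow$), pick any coarse equivalence $f \colon S \to \idempotents$ with control functions $\rho_-, \rho_+$. If $s \lrel t$, then $d(f(s), f(t)) \leq \rho_+(d(s,t)) < \infty$, and since components of $\idempotents$ are singletons this forces $f(s) = f(t)$. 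The lower bound then gives $\rho_-(d(s,t)) \leq 0$, and as $\rho_-(r) \to \infty$ there is a constant $c$ with $d(s,t) \leq c$, bounding the diameter of each $\lrel$-class uniformly. The only moderately delicate step is this last deduction, where one must exploit the singleton-component structure of $(\idempotents,d)$ together with the lower bound of the coarse embedding.
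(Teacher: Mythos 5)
Your proof is correct, and items (1) and (2) are in substance the same as the paper's (the paper phrases (1) as ``$S$ is a group iff $\lrel$ is the universal relation''; you route it through counting idempotents, which also cleanly handles the ``precisely one idempotent'' clause that the paper's proof leaves implicit). The genuine difference is in the forward direction of (3). The paper works with a coarse equivalence $g \colon (E,d) \to (S,d)$ and first \emph{normalizes} it: since coarse embeddings preserve components and every permutation of $E$ is an isometry of $(E,d)$ (all distances in $E$ being $0$ or $\infty$), the existence of some coarse equivalence forces the inclusion $\iota \colon E \to S$ itself to be one; coarse surjectivity of $\iota$ then says every $s$ lies within some uniform $k$ of $s^\ast s$, giving the diameter bound $c = 2k$. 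You instead take a coarse equivalence $f \colon S \to E$ (implicitly invoking the symmetry of coarse equivalence, which the paper has noted is not obvious but true), observe that component preservation plus the singleton-component structure of $(E,d)$ forces $f$ to be constant on each $\lrel$-class, and then extract the uniform diameter bound from the lower control function: $\rho_-(d(s,t)) \leq d(f(s),f(t)) = 0$ for $\lrel$-related $s,t$, so $d(s,t) \leq c$ once $c$ is chosen with $\rho_-(r) > 0$ for all $r > c$. Your route avoids the paper's permutation/normalization step and reads the bound directly off the control functions; the paper's route never needs to reverse the direction of the given map (no coarse inverse required) and yields the explicit constant $2k$ in terms of the density constant. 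Both arguments ultimately hinge on the same structural fact, which you isolate well at the outset: $E$ meets each component of $(S,d)$ in exactly one point, namely $s \mapsto s^\ast s$.
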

  \begin{proof}

    The first statement~(\ref{lemma:resp-lclasses:group}) follows from the fact that $S$ is a group if and only if $\lrel$ is the universal relation.

    Condition~(\ref{lemma:resp-lclasses:lattice}) follows from the observation that $S$ is a semilattice if and only if each element of $S$ is idempotent, that is, each $\lrel$-class is a singleton.

    Lastly, for~(\ref{lemma:resp-lclasses:ce-latt}), notice that since coarse equivalences preserve components, any coarse embedding from $(E,d)$ must map different elements to different $\lrel$-classes. Furthermore, since any permutation of $E$ is an isometry of $(E,d)$, a coarse equivalence exists from $(E,d)$ to $(S,d)$ if and only if the inclusion map $\iota  \colon E \rightarrow S$ is a coarse equivalence. This, in turn, is equivalent to $\iota$ being coarse-surjective, i.e., there is some constant $k \geq 0$ such that for every $s \in S$ there is some $e \in E$ with $d(e, s) \leq k$. Since $d(e, s) \leq k$ in particular implies $e \lrel s$, we must have $e=s^\ast s$ here, the unique idempotent in the $\lrel$-class. Thus taking $c=2k$ proves the statement.
  \end{proof}
  The following lemma will allow us to assume $S$ is a monoid when it is convenient to do so.
  \begin{lemma} \label{lemma:unital-technicality}
    Let $S$ be an inverse semigroup, and $d$ be a metric. Moreover, let $S^1 := S \sqcup \{1\}$ be $S$ with an added identity. Then there is exactly one metric $d^1$ on $S^1$ that extends $d$.
  \end{lemma}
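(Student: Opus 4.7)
\emph{Proof plan.} The heart of the argument is a structural observation about $\mathcal L$-classes in $S^1$: the singleton $\{1\}$ is its own $\mathcal L$-class. Indeed, if $1 \lrel s$ in $S^1$, then there exists $u \in S^1$ with $us = 1$. If $u = 1$ then $s = 1$; if $u \in S$ then $us \in S$ (since $S$ is closed under the inherited multiplication), contradicting $us = 1 \notin S$. Moreover, for $s, t \in S$ the relations $s \lrel t$ in $S$ and in $S^1$ coincide, since the inverses of elements of $S$ are the same in $S$ and in $S^1$, so $s^\ast s = t^\ast t$ is the same condition in either semigroup.

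With this observation, uniqueness is immediate. Any metric $d^1$ on $S^1$ extending $d$ must, by our standing convention, have components exactly the $\lrel$-classes of $S^1$. Since $\{1\}$ is its own $\lrel$-class, this forces $d^1(1,1) = 0$ and $d^1(1, s) = d^1(s, 1) = \infty$ for every $s \in S$. Together with the requirement $d^1(s, t) = d(s, t)$ for $s, t \in S$, this determines $d^1$ completely.

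For existence, one simply defines $d^1$ by the formulas above and verifies the required properties. Reflexivity and symmetry are immediate. The triangle inequality reduces to the triangle inequality for $d$ on triples in $S$, while any triple involving $1$ either contains two copies of $1$ (trivial) or has $\infty$ on the right-hand side. Uniform discreteness is inherited from $d$ with the same constant. Finally, by the observation above, the components of $d^1$ are precisely the $\lrel$-classes of $S$ together with the singleton $\{1\}$, which are exactly the $\lrel$-classes of $S^1$. There is no real obstacle here; the content of the lemma lies entirely in the rigidity imposed by the component condition, which forces the extension to be by ``$\infty$''.
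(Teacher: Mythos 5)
Your proposal is correct and follows essentially the same route as the paper: the paper's one-line proof observes that $1$ forms a trivial $\drel$-class (hence a trivial $\lrel$-class, since it never arises as a product with a factor in $S$), which forces the extension $d^1(1,s)=\infty$ for all $s\in S$, exactly the rigidity argument you spell out. Your version merely makes explicit the verification that the forced formula is indeed a metric in the paper's sense, which the paper leaves implicit.
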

  \begin{proof}
    Since by construction, $1$ does not arise as a product with a factor in $S$, it follows that $1$ forms a trivial $\drel$-class, and hence
    $$ d^1(s, t) =
        \left\{
          \begin{array}{rl}
            d(s, t) & \text{if} \; s, t \in S, \\
            0 & \text{if} \; s = t = 1 \in S^1, \\
            \infty & \text{otherwise}.
          \end{array}
        \right. $$ 
  \end{proof}

The next lemma shows that the analogue of Proposition~\ref{prop:wordmetric-prop} holds for any metric satisfying the conditions of Definition \ref{def:metric}.

  \begin{lemma} \label{lemma:metric-right-inv}
    Let $S$ be a quasi-countable inverse semigroup, and let $d$ be a right subinvariant metric. Then the following assertions hold:
      \begin{enumerate}
    	\item\label{lemma:metric-right-inv:contract} If $t \lrel st$, then $d(t,st) \leq d(s^\ast s,s)$.
    	\item \label{lemma:metric-right-inv:po} If $y \leq x$, then $d(y^\ast y, y) \leq d(x^\ast x, x)$.
       \item \label{lemma:metric-right-inv:iso} The map $L_{s} \rightarrow L_{s^*}$, where $t \mapsto ts^*$, is $d$-isometric, where $L_{s}$ and $L_{s^*}$ are the $\lrel$-classes of $s$ and $s^*$ respectively.
      \item \label{lemma:metric-right-inv:eq} $d(s, t) = d(ss^*, ts^*)$ for every pair $s, t \in S$ such that $s \lrel t$.
      \item \label{lemma:metric-right-inv:q} Given any two $\lrel$-classes in the same $\drel$-class, there is an isometry between them.
      
    \end{enumerate} 
  \end{lemma}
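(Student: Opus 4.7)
The plan is to prove the five parts in order, leaning throughout on right subinvariance together with two standard facts from inverse semigroup theory: the characterization of $\lrel$ via idempotents (namely, $t \lrel st$ iff $tt^* \leq s^*s$, obtained by rewriting $t^*t = (st)^*(st) = t^*s^*st$ and using that idempotents commute), and the compatibility of the natural partial order with products and inverses. These algebraic identities will turn the desired distance bounds directly into applications of right subinvariance.

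For part (1), my first step would be to deduce that $t \lrel st$ forces $s^*s \cdot t = t$: since $tt^* \leq s^*s$ and idempotents commute, $s^*s \cdot tt^* = tt^*$, and hence $s^*s \cdot t = s^*s \cdot tt^* \cdot t = tt^* \cdot t = t$. Applying right subinvariance to the pair $(s^*s, s)$ with multiplier $t$ then yields $d(t, st) = d(s^*s \cdot t, s \cdot t) \leq d(s^*s, s)$. Part (2) is analogous: from $y \leq x$ one obtains both $y = x \cdot y^*y$ and $y^*y \leq x^*x$, so applying right subinvariance to $(x^*x, x)$ with multiplier $y^*y$ gives $d(y^*y, y) = d(x^*x \cdot y^*y, x \cdot y^*y) \leq d(x^*x, x)$.

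For part (3), I would first verify that $t \mapsto ts^*$ sends $L_s$ bijectively onto $L_{s^*}$ with inverse $u \mapsto us$, the composition being the identity because $t \cdot s^*s = t \cdot t^*t = t$ whenever $t \in L_s$ (and symmetrically on $L_{s^*}$). Right subinvariance then yields $d(t_1 s^*, t_2 s^*) \leq d(t_1, t_2)$ and, applied with multiplier $s$ to the pair $(t_1 s^*, t_2 s^*)$, the reverse inequality; hence the map is an isometry. Part (4) is simply the specialization of (3) to the two elements $s, t \in L_s$. For part (5), given $L_s$ and $L_t$ in a common $\drel$-class I would pick $u$ with $s \rrel u \lrel t$, note that $ss^* = uu^*$ implies $L_{s^*} = L_{u^*}$ because these $\lrel$-classes share the common idempotent $ss^* = uu^*$, and then compose the two isometries $L_s \to L_{s^*} = L_{u^*} \to L_u = L_t$ from (3) to obtain the required isometry $x \mapsto xs^*u$.

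The single delicate point I anticipate is establishing the identity $s^*s \cdot t = t$ in part (1), since it rests on the translation between Green's relation $\lrel$ and the idempotent order; once this is in hand, every remaining step reduces to right subinvariance combined with routine idempotent arithmetic.
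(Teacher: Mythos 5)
Your proposal is correct, and for parts (1)--(4) it follows essentially the same route as the paper: extract the algebraic identities $t = s^*st$ (resp.\ $y = xy^*y$ with $y^*y \leq x^*x$, and $ts^*s = t$ for $t \in L_s$) and then apply right subinvariance once to get each inequality, or twice in opposite directions to get the isometry in (3); your derivation of $t = s^*st$ via $tt^* \leq s^*s$ and commuting idempotents is a fine substitute for the paper simply recalling this fact. The genuine difference is in part (5). The paper picks $t$ with $s_1 \lrel t \rrel s_2$ and applies (3) a single time, asserting $L_2 = L_{t^*}$; but $t \rrel s_2$ only gives $tt^* = s_2s_2^*$, so $L_{t^*} = L_{s_2^*}$, the $\lrel$-class of $s_2^*$ rather than of $s_2$, and as written the paper's one-step argument lands in the wrong class -- it needs a second application of (3) (namely $L_{s_2^*} \cong L_{s_2}$) to finish. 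Your version -- choose $u$ with $s \rrel u \lrel t$, note $L_{s^*} = L_{u^*}$ since both classes contain the idempotent $ss^* = uu^*$, and compose the two isometries $L_s \to L_{s^*} = L_{u^*} \to L_u = L_t$, $x \mapsto xs^*u$ -- is exactly this two-step correction, so it is the more careful argument and what each reader should actually follow. One small point you should still make explicit in (3): well-definedness, i.e.\ that $t \in L_s$ implies $ts^* \in L_{s^*}$ via $(ts^*)^*(ts^*) = st^*ts^* = ss^*$; your mutually-inverse composition identities do not by themselves show the map lands in the stated codomain, and this one-line computation is precisely what the paper records.
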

  \begin{proof}
  	For~(\ref{lemma:metric-right-inv:contract}), recall that $t \lrel st$ implies $t=s^*st$, so $d(s^\ast s,s) \geq d(s^\ast st,st)=d(t,st)$ by right subinvariance.
  	Item (\ref{lemma:metric-right-inv:po}) is an immediate consequence: if $x \geq y$ then $xy^*y=y$, so applying~(\ref{lemma:metric-right-inv:contract}) with $s=x$ and $t=y^*y$ we get $d(x^*x,x)\geq d(y^*y, xy^*y) =d(y^*y, y)$ indeed.  	
  	
For~(\ref{lemma:metric-right-inv:iso}) take $t_1, t_2 \in L_s$, that is, $t_1^*t_1=t_2^*t_2=s^*s$. First note that 
$$(t_is^*)^*t_is^*=st_i^*t_is^*=ss^*=(s^*)^*s^*,$$ so $t_is^* \in L_{s^\ast}$
indeed. By right subinvariance 
$$d(t_1,t_2)=d(t_1s^*s, t_2s^*s)\leq d(t_1s^*, t_2s^*) \leq d(t_1,t_2)$$ which proves the claim. Notice that~(\ref{lemma:metric-right-inv:eq}) immediately follows from~(\ref{lemma:metric-right-inv:iso}). For~(\ref{lemma:metric-right-inv:q}), consider two $\drel$-related $\lrel$-classes $L_1$ and $L_2$, and let $s_i \in L_i$.
As $s_1 \drel s_2$, there exists $t \in S$ such that $s_1 \lrel t \rrel s_2$, that is, $L_t=L_1$ and $t^* \lrel s_2^\ast$, so $L_{2}=L_{t^*}$. Thus $L_1$ and $L_2$ are isometric by~(\ref{lemma:metric-right-inv:iso}). 
  \end{proof}
  
  \subsection{Existence of metrics} \label{subsec:exis-metric}
  In this subsection we show that every quasi-countable inverse semigroup admits an essentially unique, proper, and right subinvariant metric. The first part, Proposition~\ref{prop:metric-existence}, proves the existence of such metrics, while the second one, Proposition~\ref{prop:metric-unique}, proves these metrics are unique up to coarse equivalence. The proofs of both of these facts are inspired by those for countable groups (see, e.g.,~\cite{NY12}).

As in the case of groups, metrics correspond naturally to \textit{length functions}, and it is often more convenient to define these rather than metrics. Proposition \ref{prop:length-functions-and-metrics} below establishes the correspondence between the two.

  \begin{definition} \label{def:chains}
    Let $(P, \leq)$ be a partially ordered set and $A \subset P$. We say $A$ is \textit{finitely upper bounded} (\textit{f.u.b.} for short) if there is some finite $F \Subset P$ such that for every $a \in A$ there is some $b \in F$ with $a \leq b$.
  \end{definition}

\begin{definition} \label{def:lengthfunction}
Let $S$ be a quasi-countable inverse semigroup, and let $l\colon S \to [0, \infty)$. We say that $l$ is a \textit{length function} if $\inf_{s \in S \smallsetminus E} l(s) > 0$ and for any $s,t \in S$ the following hold:
\begin{enumerate}
\item \label{item:length-zero} $l(s)=0$ if and only if $s \in E$;
\item \label{item:length-star} $l(s)=l(s^*)$;
\item \label{item:length-product} $l(st) \leq l(s)+l(t)$. 
\end{enumerate}
Moreover, we say that $l$ is \textit{proper} if for any $r \in \mathbb R^+$, the set $C_r=\{s \in S: 0< l(s) \leq r\}$ is finitely upper bounded.
\end{definition}

\begin{remark}
  The condition that $\inf_{s \in S \smallsetminus E} l(s) > 0$ guarantees that the metric associated to $l$ in Proposition~\ref{prop:length-functions-and-metrics} is uniformly discrete, albeit this condition is not actually needed from a purely coarse geometric point of view.
\end{remark}

For the purposes of this paper, the set $C_r$ plays the role of the $r$-ball of a discrete countable group. For this reason, we shall call $C_r$ the \textit{$r$-cylinder} of $S$. 

Note that any length function $l$ on $S$ respects the partial order of $S$, in the sense that $l(t)\leq l(s)$ whenever $t \leq s$, as in this case $l(t)=l(st^*t)\leq l(s)+l(t^*t)=l(s)$.

\begin{proposition} \label{prop:length-functions-and-metrics}
If $d$ is a right subinvariant metric on $S$, then 
$$l(s)=d(s^\ast s,s)$$
defines a length function on $S$. Conversely, if $l \colon S \to [0, \infty)$ is a length function on $S$, then 
$$d(s,t) =
\left\{\begin{array}{rl}
l(ts^\ast)&\hbox{if } s \lrel t,\\
\infty&\hbox{otherwise}
\end{array}\right.$$
is a right subinvariant metric on $S$.

These define mutually inverse mappings between the length functions and right subinvariant metrics, and proper length functions correspond exactly to proper metrics.
\end{proposition}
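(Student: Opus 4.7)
The overall plan is to verify the axioms in both directions, then check the two constructions are mutually inverse, and finally handle properness.

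\emph{From metric to length function.} Given a right subinvariant $d$, set $l(s) = d(s^\ast s, s)$. Axiom (\ref{item:length-zero}) follows since $s^\ast s = s$ exactly when $s$ is idempotent, and uniform discreteness forces $d(s^\ast s, s) = 0 \Leftrightarrow s^\ast s = s$. Axiom (\ref{item:length-star}) follows from Lemma~\ref{lemma:metric-right-inv}~(\ref{lemma:metric-right-inv:iso}) applied to the isometry $L_s \to L_{s^\ast}$, $t \mapsto t s^\ast$, which sends $s \mapsto ss^\ast$ and $s^\ast s \mapsto s^\ast$. For the subadditivity (\ref{item:length-product}), the key observation is that $st$ and $s^\ast s t$ lie in the same $\lrel$-class (both having $t^\ast s^\ast s t$ as right identity), and $s^\ast s t \leq t$ in the natural partial order. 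Using right subinvariance and Lemma~\ref{lemma:metric-right-inv}~(\ref{lemma:metric-right-inv:po}) I get
\[
  l(st) \leq d\bigl(t^\ast s^\ast s t,\, s^\ast s t\bigr) + d\bigl(s^\ast s t,\, st\bigr) \leq l(s^\ast s t) + l(s) \leq l(t) + l(s).
\]

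\emph{From length function to metric.} Given $l$, define $d$ as stated. Symmetry uses $l(ts^\ast) = l((ts^\ast)^\ast) = l(st^\ast)$. Reflexivity: when $s = t$, $l(ss^\ast) = 0$ since $ss^\ast \in E$. The implication $d(s,t) = 0 \Rightarrow s = t$ requires a short argument: if $ts^\ast \in E$, then $t \leq s$ follows by multiplying on the right by $s$ using $s^\ast s = t^\ast t$; by symmetry also $s \leq t$, hence $s = t$. The triangle inequality uses the factorization $us^\ast = (ut^\ast)(ts^\ast)$, valid whenever $s \lrel t \lrel u$ (so the three $\lrel$-related elements share a common right identity). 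Right subinvariance is the computation $(tx)(sx)^\ast = t(s^\ast s) x x^\ast s^\ast = (ts^\ast)(sxx^\ast s^\ast)$, where the second factor is the idempotent $(sx)(sx)^\ast$, so by (\ref{item:length-zero}) and (\ref{item:length-product}) its length is dominated by $l(ts^\ast)$.

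\emph{Mutual inversion.} Starting from $d$ and producing $l(s) = d(s^\ast s, s)$, the recovered metric $d'(s,t) = l(ts^\ast) = d(st^\ast t s^\ast, ts^\ast) = d(ss^\ast, ts^\ast)$ equals $d(s,t)$ by Lemma~\ref{lemma:metric-right-inv}~(\ref{lemma:metric-right-inv:eq}). Starting from $l$ and producing $d$, the recovered $l'(s) = d(s^\ast s, s) = l(s(s^\ast s)) = l(s)$.

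\emph{Properness.} The direction $d$ proper $\Rightarrow l$ proper is direct: if $l(s) \leq r$ and $s \notin E$, then $d(s^\ast s, s) \leq r$ with $s^\ast s \neq s$, and the finite set $F$ witnessing properness of $d$ contains some $x$ with $s = x \cdot s^\ast s$, that is, $s \leq x$. For the converse, I expect the main (but still mild) obstacle: given $r$ and a finite upper bound set $F$ for $C_r$, one must promote an inequality $y x^\ast \leq t \in F$ to an equation $y = t x$. This uses the identity $y \lrel x$ (so $y^\ast y = x^\ast x$) and the characterization of $\leq$: from $y x^\ast \leq t$ one deduces $y = y x^\ast x = (\text{idempotent})\cdot t x \leq t x$, and then $y = (t x) y^\ast y = (t x) x^\ast x = t x$. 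Hence $y \in F x$, establishing properness of $d$ with the same $F$.
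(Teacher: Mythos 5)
Your proposal is correct and takes essentially the same route as the paper's own proof: the same verification of the length-function axioms (your midpoint $s^\ast st$ in the subadditivity estimate is literally the paper's midpoint $tt^\ast s^\ast st$, since $tt^\ast s^\ast st = s^\ast st$), the same factorizations $us^\ast=(ut^\ast)(ts^\ast)$ and $(tx)(sx)^\ast=(ts^\ast)\cdot(sx)(sx)^\ast$ for the triangle inequality and right subinvariance, the same appeal to Lemma~\ref{lemma:metric-right-inv} for symmetry and mutual inversion, and the same promotion of $ts^\ast \leq m \in F$ to $t = ms$ for properness. The only cosmetic difference is that you route one subadditivity bound through Lemma~\ref{lemma:metric-right-inv}~(\ref{lemma:metric-right-inv:po}) where the paper invokes right subinvariance directly; these are interchangeable.
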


\begin{proof}
We begin by the first statement, that is, we need to show that $l(s)$ satisfies Definition \ref{def:lengthfunction}. Indeed, for (\ref{item:length-zero}), notice that $d(s^*s,s)=0$ if and only if $s=s^*s$, which holds if and only if $s \in E$. For (\ref{item:length-star}), it suffices to show that $d(s^*s,s)=d(s^*, ss^*)$, which follows from Lemma \ref{lemma:metric-right-inv} (\ref{lemma:metric-right-inv:iso}). For property (\ref{item:length-product}), 
$$l(st)=d(t^*s^*st, st) \leq d(t^*s^*st, tt^*s^*st)+d(tt^*s^*st, st)$$ by the triangle inequality, and 
$$d(t^*s^*st, tt^*s^*st)=d(t^*t(t^*s^*st), t(t^*s^*st)) \leq d(t^*t,t)=l(t)$$ by right subinvariance. Furthermore, as $tt^*s^*st=s^*stt^*t=s^*st$, we have 
$$d(tt^*s^*st, st)=d(s^*st, st)\leq d(s^*s,s)=l(s)$$ by right subinvariance again, proving the claim.

Lastly, we show that when $d$ is proper, so is $l$. Indeed, there exists a finite set $F$ such that for any $s \in C_r$, we have $s \in Fs^*s \subseteq FE$, so $C_r$ is upper bounded by $F$.

For the converse statement, we first prove that $d$ is a metric. For any $s,t \in S$, by definition we have $d(s,t)=0$ if and only if $s^*s=t^*t$ and $ts^* \in E$. If $s=t$, then this clearly holds. Conversely, if  $ts^* \in E$, then $(ts^*)^*=st^* \in E$ as well, so 
$$s=ss^*s =st^*t \leq t = tt^*t=ts^*s \leq s,$$
hence $s=t$.
Symmetry of $d$ immediately follows from Definition \ref{def:lengthfunction} (\ref{item:length-star}). For the triangle inequality, let, $s,t,u \in S$. Notice that if $s \not\lrel u$ or $t\not\lrel u$, then $d(s,t) \leq d(s,u)+d(u,t)$ is immediate, and whence we may assume $s^*s=u^*u=t^*t$. Then
$$d(s,t)=l(ts^*)=l(tu^*us^*)\leq l(tu^*)+l(us^*)=d(u,t)+d(s,u)$$
by Definition \ref{def:lengthfunction} (\ref{item:length-product}). Right subinvariance of $d$ follows automatically, for if $s,t,x \in S$ then 
$$d(sx,tx)=l(txx^*s^*)\leq l(ts^*)=d(s,t).$$
Finally, we show that if $l$ is proper, so is $d$. Assume $C_r$ is finitely upper bounded by $F$, and let $0<d(s,t)=l(ts^\ast) \leq r$. Then $ts^\ast \leq m$ for some $m \in F$, and as $s^*s=t^*t$ that means $ts^*=m(st^*)(ts^*)=mss^*$, so $ms=mss^*s=ts^*s=tt^*t=t$, that is $t \in Fs$.

Lastly, to prove that the above maps are mutually inverse, assume that $s \lrel t$ and note that
$$l(s)\stackrel{def}{=}d(s^*s,s)\stackrel{def}{=}l(ss^*s)=l(s)$$ and
$$d(s,t)\stackrel{def}{=}l(t^*s)\stackrel{def}{=}d(st^*ts^*, ts^*)=d(ss^*,ts^*)=d(s,t).$$
\end{proof}

The following lemma is a useful way to rephrase quasi-countability.

  \begin{lemma} \label{lemma:cnt-quasi-gen:subsem}
    Let $S$ be a quasi-countable inverse semigroup. Then there is a countable inverse subsemigroup $T \subset S$ such that for every $s \in S$ either $s \in E$ or there is some $t \in T$ and $e \in E$ with $s = te$.
  \end{lemma}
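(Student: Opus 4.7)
My plan is to take $T := \langle F \rangle$, the inverse subsemigroup of $S$ generated by the countable quasi-generating set $F$ provided by Definition~\ref{def:cnt-quasi-gen}. Since $T$ consists precisely of finite products of elements from $F \cup F^*$, it is countable. What remains is to show that every $s \in S \setminus E$ admits a factorization $s = te$ with $t \in T$ and $e \in E$.

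The key technical tool will be the identity $ex = x(x^*ex)$, valid for any $x \in S$ and $e \in E$, together with the fact that $x^*ex$ is itself an idempotent. Both are routine inverse-semigroup calculations: the first uses that $xx^*$ and $e$ commute (as both lie in the semilattice $E$) together with $xx^*x = x$, while $(x^*ex)^2 = x^*e(xx^*)ex = x^*(xx^*)ex = x^*ex$ gives the second, using the same commutativity and $x^*xx^* = x^*$. In other words, we can always slide an idempotent past a factor to the right, at the cost of replacing it by another idempotent.

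With this in hand, I will induct on the length $n$ of a representation $s = x_1 \cdots x_n$ with each $x_i \in F \cup F^* \cup E$, which exists because $S = \langle F \cup E \rangle$. The invariant to maintain is that each partial product $x_1 \cdots x_k$ is either already in $E$ or of the form $y_1 \cdots y_j e'$ with $y_i \in F \cup F^*$ and $e' \in E$. Appending the next factor $x_{k+1}$ is then handled in two cases: if $x_{k+1} \in E$ we absorb it into the idempotent tail, while if $x_{k+1} \in F \cup F^*$ we apply the push identity to move $e'$ past $x_{k+1}$, appending $x_{k+1}$ to the list of $y_i$'s and replacing $e'$ by $x_{k+1}^* e' x_{k+1} \in E$. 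If every $x_i$ lies in $E$ then $s \in E$; otherwise the final expression yields $s = te$ with $t = y_1 \cdots y_j \in T$ and $e \in E$, as required.

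I do not anticipate a substantive obstacle: the whole argument rests on the single commuting-idempotent identity above, and everything else is routine bookkeeping driven by the induction.
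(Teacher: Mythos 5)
Your proposal is correct and is essentially the paper's own argument: the paper likewise sets $T := \langle F\rangle$, writes $s = t_1e_1\cdots t_ne_n$ with $t_i \in T$, $e_i \in E$, and concludes $s = (t_1\cdots t_n)\,s^*s$. The only difference is one of packaging: where you re-derive the idempotent-sliding mechanism by hand via the identity $ex = x(x^*ex)$ and an induction on word length, the paper compresses exactly this into a one-line appeal to the compatibility of the natural partial order (each $t_ie_i \leq t_i$, hence $s \leq t_1\cdots t_n$, hence $s = t_1\cdots t_n\,s^*s$), which also hands it the canonical idempotent $e = s^*s$.
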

  \begin{proof}
    Let $F \subset S$ be a countable set such that $S = \langle F \cup E\rangle$, and let $T := \langle F\rangle$. Given any $s \in S \setminus E$ there are $t_1, \dots, t_n \in T$ and $e_1, \dots, e_n \in E$ such that $s = t_1e_1 \dots t_ne_n$, hence  either $s \in E$ or $s \leq t_1 \ldots t_n$, and so $s=t_1 \ldots t_n s^\ast s$.
  \end{proof}

  \begin{proposition} \label{prop:metric-existence}
    Every quasi-countable inverse semigroup $S$ admits a proper, right subinvariant metric.
  \end{proposition}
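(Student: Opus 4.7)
The plan is to invoke Proposition~\ref{prop:length-functions-and-metrics} and construct a proper length function $l \colon S \to \mathbb{Z}_{\geq 0}$ directly. By Lemma~\ref{lemma:cnt-quasi-gen:subsem} together with quasi-countability, I first fix a countable subset $F \subseteq S \setminus E$, closed under $s \mapsto s^*$, with $S = \langle F \cup E \rangle$. Partitioning $F$ into the (at most two-element) involution-orbits $\{s, s^*\}$, enumerated as $O_1, O_2, \ldots$, I assign weights $l_0(e) := 0$ for $e \in E$ and $l_0(s) := k$ for $s \in O_k$. This ensures both $l_0(s) = l_0(s^*)$ and that each sublevel set $l_0^{-1}(\{1, \ldots, r\})$ is finite.

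Next I would define
$$l(s) := \min\left\{\, l_0(x_1) + \cdots + l_0(x_n) \,:\, s = x_1 \cdots x_n,\ x_i \in F \cup E \,\right\},$$
which is a well-defined non-negative integer. The three axioms of Definition~\ref{def:lengthfunction} follow routinely: $l(s) = 0$ forces all factors to lie in $E$ and hence (since idempotents commute) $s \in E$; $l(s^*) = l(s)$ follows from reversing a minimizing expression and applying $*$, using $l_0(x) = l_0(x^*)$; and subadditivity is immediate from concatenation.

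The main content lies in establishing properness. Given $r \geq 0$ and $s \in S$ with $0 < l(s) \leq r$, I would take a minimizing expression and group consecutive idempotents, writing
$$s = e_0 \, y_{i_1} \, e_1 \, y_{i_2} \cdots y_{i_n} \, e_n,$$
where each $y_{i_j} \in O_{i_j}$ is non-idempotent and each $e_j \in E$ is (possibly trivial). Then $\sum_{j=1}^n i_j \leq r$ forces $n \leq r$ and $i_j \leq r$ for every $j$. Using the identity $ey = y(y^* e y)$ for $e \in E$ and $y \in S$, where $y^* e y$ is again an idempotent (as different idempotents commute with $y y^*$), I would iteratively migrate every idempotent factor past the $y_{i_j}$'s to the right, obtaining $s = y_{i_1} \cdots y_{i_n} \cdot e'$ for some $e' \in E$, whence $s \leq y_{i_1} \cdots y_{i_n}$. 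The finite set
$$F_r := \left\{\, y_{i_1} \cdots y_{i_n} \,:\, n \leq r,\ \sum_{j=1}^n i_j \leq r,\ y_{i_j} \in O_{i_j} \,\right\}$$
then witnesses the finite upper boundedness of the $r$-cylinder $C_r$.

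The principal obstacle is the unbounded proliferation of zero-weight idempotent factors in minimizing expressions: a priori, there is no control on the $e_j$'s, so naive word-counting fails. The identity $ey = y(y^*ey)$ resolves this by absorbing all idempotent factors into a single idempotent on the right, so that $s$ is dominated in the natural partial order by one element of a fixed finite set of \emph{non-idempotent} products, which is precisely the condition required by finite upper boundedness.
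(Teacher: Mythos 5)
Your proof is correct, but it takes a different route from the paper's own proof of Proposition~\ref{prop:metric-existence} --- in fact it essentially rediscovers the paper's alternative ``hands-on'' construction, the weighted word metric of Definition~\ref{def:weigthed-metric} and Proposition~\ref{prop:weigthed-metric}, recast in the language of length functions. Both arguments funnel through Proposition~\ref{prop:length-functions-and-metrics}, but the paper's proof of Proposition~\ref{prop:metric-existence} builds its length function from an exhaustion: it takes the countable subsemigroup $T$ with $S = TE \cup E$ from Lemma~\ref{lemma:cnt-quasi-gen:subsem}, exhausts it by finite symmetric sets $T_1 \subset T_2 \subset \cdots$ arranged so that $T_n T_m \subset T_{n+m}$, and sets $l(s) = n$ when $s$ first lies in $\overline{C}_n = E \cup T_n E$. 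There, properness is immediate by construction (the cylinder $C_r$ is upper bounded by $T_r$), and the work goes into securing subadditivity via the arrangement $T_n T_m \subset T_{n+m}$. In your construction the trade-off is reversed: subadditivity of the weighted word length is free (concatenate factorizations), and the work is in properness, where your migration identity $ey = y(y^*ey)$ correctly absorbs all zero-weight idempotent factors into a single idempotent on the right, giving $s \leq y_{i_1} \cdots y_{i_n}$ with the product drawn from a finite set --- exactly the finite upper boundedness required by Definition~\ref{def:lengthfunction}. This is the right fix for the genuine issue you identified (uncontrolled proliferation of weight-zero factors), which is also why the paper's Definition~\ref{def:weigthed-metric} works only with paths labeled by non-idempotent generators. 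Two cosmetic points: Lemma~\ref{lemma:cnt-quasi-gen:subsem} produces a countable inverse subsemigroup with $S = TE \cup E$, not a symmetric quasi-generating set disjoint from $E$; the set $F$ you want follows directly from quasi-countability by symmetrizing a countable quasi-generating set and discarding its idempotents, so that citation is unnecessary (though harmless). Also, $y^*ey$ is idempotent because $e$ commutes with $yy^*$, so that $(y^*ey)(y^*ey) = y^*e(yy^*)ey = y^*(yy^*)eey = y^*ey$, rather than for the reason given in your parenthesis. Neither affects correctness.
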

  \begin{proof}
    We define the metric via a proper length function. Since $S$ is quasi-countable let $T \subset S$ be as in Lemma~\ref{lemma:cnt-quasi-gen:subsem}, that is, $T$ is a countable subsemigroup such that $S = TE \cup E$. Moreover, choose an ascending sequence of finite symmetric subsets $T_1 \subset T_2 \subset \dots \subset T$ such that $\cup_{n \in \mathbb{N}} T_n = T$. Furthermore, and without loss of generality, we may assume that $T_n T_m \subset T_{n + m}$ just by adding to $T_n$ all the products of the form $st$ where $s \in T_i$ and $t \in T_{n-i}$ for $i = 1, \dots, n-1$. Consider then the subsets
    $$\ol C_0 := E, \;\; \text{and} \;\; \ol C_n := E \cup T_n E = E \cup \left\{s \in S \mid s \leq t \; \text{for some} \; t \in T_n \right\}. $$
Then we can define a length function $l\colon S \to [0, \infty)$ by putting $l(s)=n$ if $s \in \ol C_n \setminus \ol C_{n-1}$. 

We need to show that this function satisfies all the conditions in Definition \ref{def:lengthfunction}. Property~(\ref{item:length-zero}) is immediate from the definition, while (\ref{item:length-star}) follows from the symmetry of the subsets $T_i$, and (\ref{item:length-product}) holds too as $T_n T_m \subset T_{n + m}$ implies $\ol C_n \ol C_m \subseteq \ol C_{n+m}$. Lastly, $l$ is proper since for any $r \in \mathbb Z^+$, $T_r$ is a finite upper bound for $C_r=\ol C_r \setminus \ol C_{r-1}$.
  \end{proof}

A more hands-on way to construct the metric is by generalizing the word metric in finitely quasi-generated inverse semigroups. The classical word metric with an infinite quasi-generating set generally yields a non-proper (in fact, non-uniformly bounded) metric, as the vertices in the Sch\"utzenberger graphs can have infinitely many neighbours. However, we can rectify this by making some edges longer in the metric than length $1$. 

\begin{definition} \label{def:weigthed-metric}
	Let $S = \langle X \cup E \rangle$ for a countable, symmetric set $X$, and let $w \colon X \to \mathbb N$ be a function such that $w(x)=w(x^*)$ and $w^{-1}(n)$ is finite for any $n \in \mathbb N$. The \emph{weighted word metric} on $S$ is defined by taking the path metric on the Sch\"utzenberger graphs where an edge labeled by $x$ has length $w(x)$. Formally, if $s \lrel t$ with $s \neq t$, then 
	$$ d\left(s,t\right) := \min \left\{w(x_1)+ \ldots +w(x_k)\mid x_1 \ldots x_k t=s, x_i \in X \cup X^\ast  \right\}. $$
\end{definition}

\begin{proposition} \label{prop:weigthed-metric}
	The weighted word metric is a proper, right subinvariant metric on quasi-countable inverse semigroups.
\end{proposition}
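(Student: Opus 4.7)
The plan is to identify the weighted word metric with the metric associated to a proper length function via Proposition~\ref{prop:length-functions-and-metrics}, which automatically delivers right subinvariance. Concretely, define $l\colon S \to [0,\infty)$ by setting $l(e):=0$ for every $e \in \idempotents$ and, for $s \in S\setminus \idempotents$,
$$ l(s) := \min\left\{ \sum_{i=1}^{k} w(x_i) \;\Big|\; k \geq 1,\ x_1,\dots,x_k \in X\cup X^*,\ s \leq x_1\cdots x_k \right\}. $$
Since $S=\langle X \cup \idempotents\rangle$, every non-idempotent element of $S$ lies below some product of elements of $X\cup X^*$ (by Lemma~\ref{lemma:cnt-quasi-gen:subsem} or a direct argument), and because $w$ takes positive integer values only finitely many words can contribute below any fixed bound, so $l$ is well-defined.

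First I would verify the three axioms of Definition~\ref{def:lengthfunction}. Axiom (1) is immediate from the positivity of $w$: any non-empty word contributes a strictly positive sum, forcing $l(s)=0$ to imply $s\in\idempotents$. Axiom (2) uses $w(x)=w(x^*)$ together with the fact that $s\leq x_1\cdots x_k$ is equivalent to $s^* \leq x_k^* \cdots x_1^*$, a transformation which preserves weighted length. Axiom (3) follows from compatibility of the natural partial order with multiplication, supplemented by the standard observations $st \leq t$ when $s \in \idempotents$ and $st \leq s$ when $t \in \idempotents$ to handle edge cases. For properness, the set $T_r := \{x_1\cdots x_k : \sum w(x_i) \leq r\}$ is finite since $w^{-1}(n)$ is finite for every $n$, and it is by construction a finite upper bound for the $r$-cylinder $C_r$. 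Proposition~\ref{prop:length-functions-and-metrics} then yields a proper, right subinvariant metric $d'$ on $S$.

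It remains to check that $d'$ coincides with the weighted word metric $d$ of Definition~\ref{def:weigthed-metric}. Both agree on the diagonal and both are infinite on pairs outside a common $\lrel$-class, so fix $s \lrel t$ with $s \neq t$. Using $s^*s = t^*t$ one computes $(ts^*)^*(ts^*) = ss^*$, whence $ts^* \leq x_1\cdots x_k$ is equivalent to $ts^* = x_1\cdots x_k\cdot ss^*$, and right-multiplying by $s$ this becomes $t = x_1\cdots x_k s$. Therefore $l(ts^*) = \min\{\sum w(x_i) : x_1\cdots x_k s = t\}$, which matches the value $d(t,s)$ prescribed by Definition~\ref{def:weigthed-metric}, and so $l(ts^*) = d(s,t)$ modulo the symmetry of $d$. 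The main subtlety lies in that symmetry check: given a word with $x_1\cdots x_k t = s$ of weighted length $r$, one must produce a word realizing $y_1\cdots y_m s = t$ of weighted length at most $r$. The natural candidate $y_i := x_{k+1-i}^*$ has the correct weighted length by $w(x)=w(x^*)$, and it works provided the idempotent $e := x_k^*\cdots x_1^* x_1\cdots x_k$ satisfies $et = t$; this in turn follows from $s \lrel t$, since then $t^*et = s^*s = t^*t$ forces both $et \leq t$ and $(et)^*(et) = t^*t$, which together yield $et = t(et)^*(et) = tt^*t = t$. With $d = d'$ established, the conclusion is immediate from Proposition~\ref{prop:length-functions-and-metrics}.
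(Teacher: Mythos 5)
Your proof is correct, but it takes a genuinely different route from the paper. The paper verifies the required properties of the weighted word metric directly, by analogy with the finitely quasi-generated case of Example~\ref{ex:fg-word-metric}: right subinvariance follows because $x_1\cdots x_k t = s$ implies $x_1\cdots x_k tx = sx$, and properness is witnessed by the finite set $F=\{x_1\cdots x_k \mid \sum w(x_i)\leq r\}$; the metric axioms themselves (in particular symmetry) are inherited from the path-metric picture on Sch\"utzenberger graphs, where edges come in inverse pairs. You instead construct the proper length function $l(s)=\min\{\sum w(x_i) \mid s \leq x_1\cdots x_k\}$, invoke the correspondence of Proposition~\ref{prop:length-functions-and-metrics} to obtain a proper, right subinvariant metric for free, and then identify that metric with the weighted word metric via the order-theoretic equivalence $ts^* \leq x_1\cdots x_k \Leftrightarrow t = x_1\cdots x_k s$ (valid when $s \lrel t$). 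What your approach buys is that the triangle inequality and right subinvariance never need to be checked by hand, and -- more substantively -- that the symmetry of Definition~\ref{def:weigthed-metric}, which the paper passes over as ``completely analogous,'' is made fully explicit: your argument that $e := x_k^*\cdots x_1^*x_1\cdots x_k$ satisfies $et=t$ whenever $x_1\cdots x_k t = s \lrel t$ is exactly the point that needs care, since the defining minimum ranges over all words, not only over paths staying inside the Sch\"utzenberger graph. (One cosmetic quibble: $et \leq t$ holds simply because $e \in \idempotents$, not because of the computation $t^*et=t^*t$; only the second half of that sentence is needed.) The cost of your route is the extra identification step; the payoff is a cleaner conceptual packaging and an explicit symmetry proof. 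Both properness arguments ultimately use the same finite set of short words, translated through Proposition~\ref{prop:length-functions-and-metrics} in your case.
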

\begin{proof}
	The proof is completely analogous to the finitely quasi-generated case in Example \ref{ex:fg-word-metric}. The components of the metric are, by definition, the $\lrel$-classes, and right subinvariance can be seen in exactly the same way as before.
	For properness, given any $r \geq 0$ let $$F=\{x_1\ldots x_k\mid x_i \in X \cup X^\ast, w(x_1)+\ldots+w(x_k)\leq r\}.$$ Notice that the condition $w^{-1}(n)$ is finite ensures that $F$ is finite. Suppose $d(s,t) \leq r$, that is, either $s=t$ or $x_k \ldots x_1 t=s$ with $w(x_1)+ \ldots +w(x_k) \leq r, x_i \in X \cup X^\ast$, in which case $s \in Ft$ indeed.	
\end{proof}

Observe that in Example \ref{ex:nonfl}, the metric $d_2$ was a weighted word metric. Indeed, if the group $G$ is generated by the finite set $X$, then $S$ is generated by $X \times \mathbb N$, a countable set. The function $w\colon X \times \mathbb N \to \mathbb N$, $(x,n) \mapsto n$ yields exactly the metric $d_2$.

\subsection{Uniqueness of metrics.} \label{subsec:uniq-metric}
We proceed by showing that right subinvariant, proper metrics on inverse semigroups are unique up to bijective coarse equivalence.

  \begin{lemma} \label{lemma:unbounded-values-grow}
    Let $S$ be an inverse monoid equipped with a proper length function $l$, and let $A \subseteq S$. Then $A$ is finitely upper bounded if and only if $\sup_{s \in A} l(s) < \infty$.
  \end{lemma}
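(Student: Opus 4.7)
The plan is to reduce everything to the definition of properness of $l$ (the cylinders $C_r$ are f.u.b.) and to the fact, already recorded in the text, that any length function respects the natural partial order: $l(t)\leq l(s)$ whenever $t\leq s$. The role of $S$ being a \emph{monoid} will enter only to handle the idempotent part of $A$.

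For the ``only if'' direction, suppose $A$ is f.u.b.\ by some finite $F\Subset S$. For each $a\in A$ pick $f_a\in F$ with $a\leq f_a$; since $l$ is order-preserving we have $l(a)\leq l(f_a)\leq \max_{f\in F}l(f)$, so $\sup_{s\in A}l(s)\leq \max_{f\in F}l(f)<\infty$. This direction doesn't use properness of $l$, only that $l$ respects the partial order.

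For the ``if'' direction, set $r:=\sup_{s\in A}l(s)<\infty$ and split $A$ according to whether an element is idempotent:
\[
A=(A\cap E)\,\cup\,(A\setminus E).
\]
By definition of the cylinder, $A\setminus E\subseteq C_r$, and by properness there is some finite $F_r\Subset S$ with $C_r$ f.u.b.\ by $F_r$. For the idempotent part, this is where I use that $S$ is a monoid: the identity $1\in S$ is the largest element in the natural partial order, so $A\cap E\subseteq E$ is f.u.b.\ by the singleton $\{1\}$. Hence $A$ is f.u.b.\ by the finite set $F_r\cup\{1\}$.

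There is no real obstacle here; the one subtlety worth flagging is precisely the reason the statement assumes $S$ is a monoid. Without a global top element, the set $A\cap E$ (which can have $l$-length zero but be arbitrarily large in the partial order, e.g., in a semilattice like $(\mathbb{N},\min)$) need not be f.u.b., and indeed the conclusion would fail. Passing to $S^1$ via Lemma~\ref{lemma:unital-technicality} is the standard workaround if one later wishes to apply the lemma to a non-monoid.
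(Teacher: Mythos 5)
Your proof is correct and follows essentially the same route as the paper: the easy direction via the fact that $l$ respects the natural partial order (the paper phrases this contrapositively), and the substantive direction by splitting $A$ into $A\setminus E\subseteq C_r$, handled by properness, and $A\cap E$, handled by the identity. One small correction of wording: $1$ is \emph{not} the largest element of $S$ in the natural partial order (indeed $s\leq 1$ if and only if $s\in E$), but since you only invoke it to bound $A\cap E$, your argument is unaffected.
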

  \begin{proof}
    First suppose that for any $s \in A$, $l(s) \leq r$ for some $r$. Then, because the cylinder $C_r$ is finitely upper bounded, there is a finite upper bound $F$ for $A \setminus E$. Since $S$ has an identity, adding that identity to $F$ ensures it is a finite upper bound for $A$.
    
    For the converse, assume that $\sup_{s \in A} l(s) = \infty$, and $A$ is upper bounded by some set $F$.  Take a sequence $\{s_n\}_{n \in \mathbb{N}} \subset A$ such that $l(s_n) \geq n$ for every $n$. If $s_n \leq t$ for some $t \in F$, then $l(s_n) \leq l(t)$, so
    $$ \infty = \sup_{n \in \mathbb{N}} l(s_n) \leq \sup_{t \in F} l(t), $$
    hence $F$ cannot be finite.
  \end{proof}

This gives us the following equivalent characterizations of \emph{coarsely trivial} inverse semigroups:

\begin{corollary} \label{cor:coarse-finite}
For any inverse monoid $S$ equipped with a proper right subinvariant metric $d$, the following are equivalent:
\begin{enumerate}
	\item \label{cor:coarse-finite:E} $(S,d)$ is coarsely equivalent to $(E,d)$.
	\item \label{cor:coarse-finite:findiam} $\sup_{x \lrel y} d(x,y)< \infty$,
	\item \label{cor:coarse-finite:finlength} $\sup_{s \in S} d(s^\ast s,s)< \infty$,
	\item \label{cor:coarse-finite:finbound} $S$ is finitely upper bounded.
\end{enumerate}
\end{corollary}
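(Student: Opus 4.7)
The plan is to prove the four-way equivalence by stringing together the previously established correspondence between metrics and length functions with Lemma~\ref{lemma:resp-lclasses} and Lemma~\ref{lemma:unbounded-values-grow}. Essentially, every piece is already in place; the corollary is just the packaging.

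First, I would argue that (\ref{cor:coarse-finite:E}) $\Leftrightarrow$ (\ref{cor:coarse-finite:findiam}) is immediate from Lemma~\ref{lemma:resp-lclasses}~(\ref{lemma:resp-lclasses:ce-latt}): by assumption the components of $(S,d)$ are exactly the $\lrel$-classes, so the existence of a uniform bound $c$ on the diameters of the components is literally the statement $\sup_{x \lrel y} d(x,y) \leq c < \infty$.

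Next, I would prove (\ref{cor:coarse-finite:findiam}) $\Leftrightarrow$ (\ref{cor:coarse-finite:finlength}) via the length function $l(s) := d(s^\ast s, s)$ from Proposition~\ref{prop:length-functions-and-metrics}. On one hand, since $s^\ast s \lrel s$ for every $s \in S$, each value $l(s) = d(s^\ast s, s)$ appears among the distances $d(x,y)$ with $x \lrel y$, giving $\sup_{s \in S} l(s) \leq \sup_{x \lrel y} d(x,y)$. Conversely, Proposition~\ref{prop:length-functions-and-metrics} states that whenever $x \lrel y$ one has $d(x,y) = l(yx^\ast)$, which yields the reverse inequality. Hence the two suprema coincide, so one is finite exactly when the other is.

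Finally, (\ref{cor:coarse-finite:finlength}) $\Leftrightarrow$ (\ref{cor:coarse-finite:finbound}) follows from applying Lemma~\ref{lemma:unbounded-values-grow} to the set $A := S$, using the same length function $l$: the lemma states that $A$ is finitely upper bounded if and only if $\sup_{s \in A} l(s) < \infty$, which in our case reads $\sup_{s \in S} d(s^\ast s, s) < \infty$. I do not foresee any real obstacle; all the substantive work (the coarse equivalence characterization, the length--metric correspondence, and the control on suprema via finite upper bounds) has already been carried out in the earlier lemmas, and the only care required is to check that the supremum equality in the (\ref{cor:coarse-finite:findiam}) $\Leftrightarrow$ (\ref{cor:coarse-finite:finlength}) step loses no information.
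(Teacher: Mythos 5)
Your proposal is correct and follows essentially the same route as the paper: both rest on Lemma~\ref{lemma:resp-lclasses}~(\ref{lemma:resp-lclasses:ce-latt}), the metric--length correspondence of Proposition~\ref{prop:length-functions-and-metrics} (in particular $d(x,y)=l(yx^\ast)$ for $x \lrel y$), and Lemma~\ref{lemma:unbounded-values-grow} applied with $A=S$. The only difference is cosmetic -- you link (\ref{cor:coarse-finite:E}) to (\ref{cor:coarse-finite:findiam}) where the paper links (\ref{cor:coarse-finite:E}) to (\ref{cor:coarse-finite:finlength}) -- and your supremum-equality argument for (\ref{cor:coarse-finite:findiam})~$\Leftrightarrow$~(\ref{cor:coarse-finite:finlength}) matches the paper's.
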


\begin{proof}
	The equivalence of (\ref{cor:coarse-finite:E}) and (\ref{cor:coarse-finite:finlength}) follows directly from Lemma~\ref{lemma:resp-lclasses}~(\ref{lemma:resp-lclasses:ce-latt}).
	The equivalence of (\ref{cor:coarse-finite:finlength}) and (\ref{cor:coarse-finite:finbound}) is immediate from Lemma~\ref{lemma:unbounded-values-grow} and Proposition~\ref{prop:length-functions-and-metrics}, and clearly (\ref{cor:coarse-finite:findiam}) implies (\ref{cor:coarse-finite:finlength}). For the converse, recall that by Proposition~\ref{prop:length-functions-and-metrics}, if $x \lrel y$ then for $s=yx^*$, we have $d(x,y)=l(s)=d(s^*s,s)$.
\end{proof}

We are now ready to prove coarse uniqueness of the metric.

  \begin{proposition} \label{prop:metric-unique}
    Let $S$ be a quasi-countable inverse semigroup, and let $d, d'$ be two proper and right subinvariant  metrics on $S$. The identity map $\text{id} \colon (S, d) \rightarrow (S, d')$ is then a bijective coarse equivalence.
  \end{proposition}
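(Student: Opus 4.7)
The plan is to go through the correspondence between proper right subinvariant metrics and proper length functions established in Proposition~\ref{prop:length-functions-and-metrics}, and reduce the question to the following statement about length functions: if $l$ and $l'$ are any two proper length functions on $S$, then each is bounded on the sublevel sets of the other. Since $\text{id}$ automatically preserves components (the components for both $d$ and $d'$ are exactly the $\lrel$-classes, by our standing assumption on metrics), this will suffice to make $\text{id}$ a bijective coarse equivalence.

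First I would pass, via Lemma~\ref{lemma:unital-technicality}, to the inverse monoid $S^1 = S \sqcup \{1\}$, extending $d$ and $d'$ uniquely; this is purely cosmetic and lets me treat $E$ as finitely upper bounded (by $\{1\}$). Let $l$ and $l'$ be the proper length functions corresponding to $d$ and $d'$ under Proposition~\ref{prop:length-functions-and-metrics}. Fix $r \geq 0$ and consider
\[
A_r \;=\; \bigl\{s \in S^1 : l(s) \leq r\bigr\} \;=\; C_r \cup E \cup \{1\}.
\]
Properness of $l$ gives a finite upper bound for $C_r$; together with $1$ as an upper bound for $E \cup \{1\}$, we conclude that $A_r$ is finitely upper bounded in $S^1$.

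Next I would apply Lemma~\ref{lemma:unbounded-values-grow} to the \emph{other} length function $l'$: since $A_r$ is finitely upper bounded, it has bounded $l'$-length, say $l'(s) \leq R$ for every $s \in A_r$. Concretely, if $F$ is a finite upper bound for $A_r$, then for any $s \in A_r$ we have $s \leq t$ for some $t \in F$, and the fact that length functions respect the natural partial order (recorded immediately after Definition~\ref{def:lengthfunction}) yields $l'(s) \leq l'(t) \leq \max_{t \in F} l'(t) =: R$.

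Finally I would translate this back into the metric formulation. Suppose $d(x,y) \leq r$; if $x = y$ there is nothing to prove, otherwise $x \lrel y$ and $d(x,y) = l(yx^\ast) \leq r$, so $yx^\ast \in A_r$, hence $d'(x,y) = l'(yx^\ast) \leq R$. The same argument with the roles of $d$ and $d'$ reversed yields the other direction. Thus $\text{id}$ is a coarse equivalence, and being a bijection, it is a bijective coarse equivalence. I do not anticipate a genuine obstacle here: the main point is that finite upper boundedness in the partial order is an intrinsic feature of $S$, not of the chosen metric, so the two proper length functions are forced to tame each other's sublevel sets.
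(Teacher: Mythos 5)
Your proposal is correct and follows essentially the same route as the paper's proof: reduce to the monoid $S^1$, pass to length functions via Proposition~\ref{prop:length-functions-and-metrics}, and use properness of one metric together with Lemma~\ref{lemma:unbounded-values-grow} (equivalently, order-monotonicity of the other length function) to show each metric is bounded on the sublevel sets of the other. The only differences are presentational: the paper constructs $\rho_-$ and $\rho_+$ explicitly and splits into cases according to whether $S$ is finitely upper bounded, whereas your symmetric sublevel-set criterion covers both cases at once and leaves the (routine) assembly of $\rho_\pm$ implicit.
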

  \begin{proof}
    By Lemma~\ref{lemma:unital-technicality} we may suppose, without loss of generality, that $S$ is a monoid.

	\textbf{Case 1.} $S$ is finitely upper bounded, that is, $\sup_{x \lrel y} d(x,y), \sup_{x \lrel y} d'(x, y) < \infty$ by Corollary~\ref{cor:coarse-finite}. In this case, both metrics are coarsely trivial, hence the identity map is a coarse equivalence.
	
	\textbf{Case 2.} $S$ is not finitely upper bounded, that is, $\sup_{x \lrel y} d(x,y)= \sup_{x \lrel y} d'(x, y) = \infty$ by Corollary~\ref{cor:coarse-finite}.
	
    Clearly, it is enough to prove that $\text{id} \colon (S, d) \rightarrow (S, d')$ is a coarse embedding, as it is already a bijection. That is, it suffices to construct $\rho_-, \rho_+ \colon [0, \infty] \rightarrow [0, \infty]$ non-decreasing functions such that $\rho_-(r) \rightarrow \infty$ when $r \rightarrow \infty$, $\rho_+^{-1}(\infty) = \infty$ and
    \begin{equation} \label{eq:ce-one}
      \rho_-\left(d\left(x, y\right)\right) \leq d'\left(x, y\right) \leq \rho_+\left(d\left(x, y\right)\right).
    \end{equation}
	
	  Notice that Eq.~(\ref{eq:ce-one}) is automatic when $x \not\lrel y$, and if $x \lrel y$ then
letting $s:=yx^*$, by Proposition~\ref{prop:length-functions-and-metrics} we have $d(x,y)=d(s^*s,s)$ and $d'(x,y)=d'(s^*s,s)$, so it follows it is enough to show that 
	\begin{equation} \label{eq:ce-two}
		\rho_-\left(d\left(s^*s, s\right)\right) \leq d'\left(s^*s, s\right) \leq \rho_+\left(d\left(s^*s, s\right)\right)
	\end{equation}
	for every $s \in S$. It is clear that the functions
	$$ \rho_-\left(r\right) := \min \left\{d'\left(s^*s, s\right) \mid d\left(s^*s, s\right) \geq r\right\} \;\; \text{and} \;\; \rho_+\left(r\right) := \sup \left\{d'\left(s^*s, s\right) \mid d\left(s^*s, s\right) \leq r\right\} $$
	satisfy Eq.~(\ref{eq:ce-two}) and are non-decreasing (note that the minimum in the definition of $\rho_-$ exists as $d'$ is uniformly discrete). Therefore all that is left to prove is that $\rho_-(r) \rightarrow \infty$ when $r \rightarrow \infty$ and that $\rho_+(r) < \infty$ for every $r \in [0, \infty)$. 
	
	It is immediate from the definition that $\rho_+(0)=0$. Since $d$ is proper and $S$ has an identity, 
	$\{s \in S: d(s^*s,s) \leq r\}$ is finitely upper bounded, so $\sup \left\{d'\left(s^*s, s\right) \mid d\left(s^*s, s\right) \leq r\right\}  < \infty$ indeed by Lemma~\ref{lemma:unbounded-values-grow}.
	
    To show that $\rho_-(r) \rightarrow \infty$ when $r$ grows, assume for contradiction that $\rho_-(r) \leq m$ for all $r \in [0,\infty)$.    
    Then, by the definition of $\rho_-$, for each $n \in \mathbb{N}$ there is $s_n \in S$ such that $d'(s_n^*s_n, s_n) \leq m$ and $d(s_n^*s_n, s_n) \geq n$. However, by the properness of $d'$ and the fact that $S$ is a monoid,
$\{s_n: n \in \mathbb{N}\}$ is finitely upper bounded, so by Lemma~\ref{lemma:unbounded-values-grow}, $\{d(s^*_ns_n, s_n)\}_{n \in \mathbb{N}}$ is a bounded sequence, which contradicts the assumption.
  \end{proof}

  The preceding results immediately give the following theorem.
  \begin{theorem} \label{thm:metric}
    Let $S$ be an inverse semigroup. Then the following statements are equivalent:
    \begin{enumerate}
      \item \label{thm:metric:genera} $S$ is quasi-countable;
      \item \label{thm:metric:metric} $S$ admits a proper and right subinvariant uniformly discrete metric whose components are the $\lrel$-classes. 
    \end{enumerate}
    Moreover, such a metric is unique up to bijective coarse equivalence.
  \end{theorem}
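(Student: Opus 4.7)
The theorem packages together the two propositions already proved in this subsection together with the converse direction (\ref{thm:metric:metric})$\Rightarrow$(\ref{thm:metric:genera}). My plan is to simply invoke the existing results and supply the missing implication, which is the only genuinely new ingredient.

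For (\ref{thm:metric:genera})$\Rightarrow$(\ref{thm:metric:metric}) I would cite Proposition~\ref{prop:metric-existence}, and for the uniqueness clause I would cite Proposition~\ref{prop:metric-unique}. The substantive step is the converse: given a proper, right subinvariant, uniformly discrete metric $d$ on $S$ whose components are the $\lrel$-classes, I need to exhibit a countable subset quasi-generating $S$. The natural candidate is to extract it from the witnesses to properness. Concretely, for each $n \in \mathbb{N}$, properness (Definition~\ref{def:metric}(\ref{def:metric:pr})) supplies a finite set $F_n \Subset S$ such that $y \in F_n x$ whenever $x \neq y$ and $d(x,y) \leq n$. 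Set $F := \bigcup_{n \in \mathbb{N}} F_n$, a countable subset of $S$.

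I claim $S = \langle F \cup E\rangle$. Pick $s \in S$; if $s \in E$ we are done. Otherwise $s \neq s^*s$, and since $s \lrel s^*s$ the distance $d(s^*s, s)$ is finite, so there is $n \in \mathbb{N}$ with $d(s^*s, s) \leq n$. Applying properness to the distinct pair $(s^*s, s)$ yields some $f \in F_n$ with $s = f\cdot (s^*s)$, expressing $s$ as a product of an element of $F$ and an idempotent, as required. This proves $S$ is quasi-countable.

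I do not anticipate a genuine obstacle here, as the hardest content (existence via Proposition~\ref{prop:metric-existence} and coarse uniqueness via Proposition~\ref{prop:metric-unique}) has already been established. The only mildly delicate point is recognising that the properness condition, stated in the form of Definition~\ref{def:metric}(\ref{def:metric:pr}), is precisely tailored so that the witnesses $F_n$ accumulate into a countable quasi-generating set; had we only assumed bounded geometry, this extraction would fail, as Example~\ref{ex:nonfl} shows.
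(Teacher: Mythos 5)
Your proposal is correct and follows exactly the paper's own proof: existence via Proposition~\ref{prop:metric-existence}, uniqueness via Proposition~\ref{prop:metric-unique}, and the converse by collecting the finite properness witnesses $F_n$ into a countable set $F$ and applying properness to the pair $(s^*s,s)$ to write $s \in F_n s^*s$ for $s \notin E$. The only difference is that you spell out the details of the converse slightly more explicitly than the paper does.
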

  \begin{proof}
    If $S$ is quasi-countable, then the existence of such a metric is provided by Proposition~\ref{prop:metric-existence}, while its uniqueness is given by Proposition~\ref{prop:metric-unique}. For the converse, given a proper metric $d$ observe we can construct finite subsets $F_n \Subset S$ such that if $s \in S\setminus E$ and $d(s,s^*s)\leq n$, then $s \in F_n s^*s$. Then the set $F := \cup_{n \in \mathbb{N}} F_n$ is countable and $F \cup E$ generates $S$.
  \end{proof}
 Henceforth, we shall implicitly assume that any quasi-countable inverse semigroup $S$ is canonically equipped with a proper and right subinvariant metric, usually denoted by $d$.

  \subsection{Metrics arising from inverse semigroups} \label{subsec:ex-metrics}
 
By extending the study of countable groups as geometric objects to inverse semigroups, we gain algebraic tools to study a much bigger array of metric spaces than before. Indeed, in this subsection we show that any uniformly discrete metric space $X$ of bounded geometry arises as the $\mathcal L$-class of some countable inverse semigroup $S$, up to bijective coarse equivalence.

The idea behind the proof is very straightforward. Let $X$ be a uniformly discrete metric space of bounded geometry  (note this implies $X$ is countable). For $x,y \in X$, let $\gamma_{y,x}$ be the map with domain $\{x\}$ and range $\{y\}$, mapping $x \mapsto y$. Notice that $\gamma_{y,x}$ sits inside the symmetric inverse monoid $\sym X$ as it is a partial injective map.

We will define $S$ as a countably generated inverse subsemigroup of $\sym X$ containing all the rank $1$ maps $\gamma_{y,x}$ for any $x,y \in X$. Recall from Subsection \ref{subsec:pre-invsem} that $\gamma_{y,x} \lrel \gamma_{v,u}$ if and only if $x=u$, so the rank $1$ $\lrel$-classes are in bijection with $X$.
Therefore, fixing any $x \in X$, the metric space $X$ is in natural bijection with the $\lrel$-class $L_x$ corresponding to the domain $\{x\}$ via the map $\gamma_{y,x} \mapsto y$. The idea is to define a proper, right subinvariant metric $d$ on $S$ such that $d(\gamma_{y,x}, \gamma_{z,x})$ is uniformly close to $d_X(y,z)$, yielding that $L_x$ is in bijective coarse equivalence with $X$.

One's first idea might be to just take the inverse subsemigroup of $\sym X$ consisting of all rank $1$ maps and the empty map, with the distance inherited from $X$. The issue is that this metric may not be proper. Properness would mean that if $0 < d(\gamma_{x,y}, \gamma_{x,z}) =d(y,z) \leq n$, then $\gamma_{y,x} \in F\gamma_{z,x}$ for some finite set $F$ -- observe that $\gamma_{y,x} \in F\gamma_{z,x}$  holds if and only if there is a map $\varphi \in F$ with $z \mapsto y$. If there are infinitely many pairs of points with distance at most $n$, there is no such a finite subset in $\{\gamma_{y,x} \colon x,y \in X\}$. We rectify this by adding in higher rank maps from $\sym X$ -- selecting these is where the proof becomes technical. We use some ideas from combinatorics.

Let $\Gamma$ be any (undirected, simple, possibly infinite) graph. Recall that a \emph{matching} in $\Gamma$ is a set of edges with no common endpoints.  Any matching $M$ determines a permutation $\varphi_M$ on the vertices which swaps the endpoints of each edge in the matching and fixes any vertex not covered by the matching.
Vizing's theorem states if $\Gamma$ is an undirected simple (possibly infinite) graph where the degree of any vertex is at most $\Delta$, then its edges can be partitioned into at most $\Delta+1$ matchings.\footnote{Vizing's theorem states this in the language of edge-colorings -- the parts in the partition correspond to edges sharing the same color, which are always matchings.}

For any $n \in \mathbb N$, consider the undirected graph $\Gamma_n$ with vertex set $X$ and an edge between $y$ and $z$ whenever $n-1<d_X(y,z)\leq n$. Note that since $X$ has bounded geometry, $\Gamma_n$ has uniformly bounded degree. Therefore, by Vizing's theorem, there is a finite family $\mathcal M_n=\{M_{n,j}: j \in J_n\}$ of matchings containing all its edges. Consider the bijective maps $\varphi_{n,j}=\varphi_{M_{n,j}}$ defined by the matchings, and put $F_n=\{\varphi_{n,j}: j \in J_n\} \subseteq \sym X$. Notice that by construction we have 
\begin{equation}
	\label{eqn:Fnbound}
n-1 < d_X(x,\varphi(x)) \leq n \hbox{ for any } x \in X, \varphi \in F_n, \varphi(x)\neq x.
\end{equation}

Let $S$ be the inverse submonoid of $\sym X$ generated by the sets $F_n$, where $n \in \mathbb N$ and the identity maps on singletons $\id_{y}, y \in X$. Notice that for any pair of points $x,y \in X$ such that $n-1<d(x,y)\leq n$, there is a map $\varphi$ in $F_n$ with $\varphi(x)=y$, and for this map, $\varphi \id_x=\gamma_{y,x}$. This shows that all maps of rank $1$ are in $S$.

The inverse semigroup will in fact consist of $3$ $\drel$-classes: a $0$ element at the bottom (the empty map), the $\drel$-classes of the rank $1$ maps in the middle, and the group generated by the bijections $\{\varphi_{n,j}\}$ on top.

Equip $S$ with the right subinvariant, proper metric induced by assigning weights to the (non-idempotent) generators:
$w(\varphi)=n \hbox{ for } \varphi \in F_n.$
We then, by Eq.~(\ref{eqn:Fnbound}), have
\begin{equation}
	\label{eqn:Fnweightbound}
	w(\varphi)-1 < d_X(x,\varphi(x)) \leq w(\varphi) \hbox{ for any } x \in X, \varphi \in F=\bigcup_{n \in \mathbb{N}} F_n.
\end{equation}

\begin{theorem}
	\label{thm:wobbling-inv-sem}
	The inverse semigroup $S$ contains an $\lrel$-class which is bijectively coarse equivalent to the metric space $X$.
\end{theorem}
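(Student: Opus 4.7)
Fix $x \in X$ and target the $\mathcal L$-class $L_x$ of $\id_x$ inside $S$. The plan is to identify $L_x$ with $X$ via the bijection $\iota \colon X \to L_x$, $y \mapsto \gamma_{y,x}$, and then verify that this is a bijective coarse equivalence by establishing the essentially tight bounds
\[d_X(y,z) \leq d_S(\gamma_{y,x}, \gamma_{z,x}) \leq d_X(y,z) + 1\]
for $y \neq z$. By the uniqueness statement in Theorem~\ref{thm:metric}, I may (and will) work throughout with the concrete weighted word metric described just before the theorem statement, where each $\varphi \in F_n$ carries weight $n$.

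The first step is to pin down the $\mathcal L$-class. An element $s \in S \subseteq \sym X$ lies in $L_x$ iff $s^*s = \id_x$, i.e. $\mathrm{dom}(s) = \{x\}$, which forces $s = \gamma_{y,x}$ for some $y \in X$. Conversely, I need to check that every such $\gamma_{y,x}$ actually lies in $S$: for $y \neq x$, set $n = \lceil d_X(x,y) \rceil$, observe that $\{x,y\}$ is an edge of $\Gamma_n$ and hence belongs to some matching $M_{n,j}$ of the Vizing partition, and conclude $\gamma_{y,x} = \varphi_{n,j} \cdot \id_x \in S$.

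For the bounds, the upper one is again a single-generator move: with $n = \lceil d_X(y,z) \rceil$ and some $\varphi \in F_n$ swapping $y$ and $z$, one has $\varphi\,\gamma_{z,x} = \gamma_{y,x}$ and so $d_S(\gamma_{y,x}, \gamma_{z,x}) \leq w(\varphi) = n \leq d_X(y,z) + 1$. The lower bound is the more substantive step. I would take any factorization $\varphi_1 \cdots \varphi_k \gamma_{z,x} = \gamma_{y,x}$ realizing the weighted word distance, note that $F = F^*$ (the generators are self-inverse involutions) and that idempotent generators label loops and contribute weight $0$, and then read off the associated walk $z = z_0, z_1, \ldots, z_k = y$ in $X$ given by $z_i = \varphi_{k-i+1}(z_{i-1})$. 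Applying Eq.~(\ref{eqn:Fnweightbound}) termwise gives $d_X(z_{i-1}, z_i) \leq w(\varphi_{k-i+1})$, and the triangle inequality in $X$ then yields $d_X(y,z) \leq \sum_i w(\varphi_i)$; taking the infimum over factorizations finishes the bound.

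The main technical step, and essentially the only nontrivial one, is this translation of a word in the generators into a walk in $X$, which is precisely what the weight calibration~(\ref{eqn:Fnweightbound}) was engineered to enable. I expect the only bookkeeping subtlety to be keeping the composition order of the left-map product straight when matching $z$ to $y$; after that, everything else is unwinding definitions.
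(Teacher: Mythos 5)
Your proposal is correct and follows essentially the same route as the paper's own proof: the same bijection $y \mapsto \gamma_{y,x}$ onto the $\lrel$-class $L_x$, the same single-generator argument for the upper bound $d_S(\gamma_{y,x},\gamma_{z,x}) \leq d_X(y,z)+1$, and the same lower bound obtained by translating a factorization in the generators into a walk in $X$ and applying Eq.~(\ref{eqn:Fnweightbound}) together with the triangle inequality. The only cosmetic differences are that you explicitly verify $L_x = \{\gamma_{y,x} \colon y \in X\}$ and invoke the uniqueness statement of Theorem~\ref{thm:metric}, both of which the paper handles in the discussion preceding the theorem.
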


\begin{proof}
We show that the $\lrel$-class $L_x$ of all maps with domain $\{x\}$ is such an $\lrel$-class for any $x \in X$. As observed, there is a natural bijection
$\Phi: X \to L_x$, $y \mapsto \gamma_{y,x}$ between $L_x$ and $X$. We now prove that this is a coarse equivalence. 

Let $y, z \in X$ be arbitrary and put $\gamma_{y,x}=\varphi_y \id_x$, $\gamma_{z,x}=\varphi_z \id_x$ for some $\varphi_y, \varphi_z \in F$. Recall that 
$$d(\gamma_{y,x}, \gamma_{z,x})=\min\left\{\sum_{i=1}^n w(\varphi_i): \varphi_i \in F, \varphi_n \cdots \varphi_1\gamma_{y,x}=\gamma_{z,x}\right\}$$
by definition, and $\varphi_n \cdots \varphi_1\gamma_{y,x}=\gamma_{z,x}$ is equivalent to $\varphi_n \cdots \varphi_1(y)=z$. As there is a map $\varphi \in F$ with $\varphi(y)=z$, and by Eq.~(\ref{eqn:Fnweightbound}) we have $w(\varphi)\leq d_X(y,z)+1$, we obtain $$d(\gamma_{y,x}, \gamma_{z,x}) \leq w(\varphi)\leq d_X(y,z)+1.$$
On the other hand, for any $\varphi_1, \ldots, \varphi_n \in F$ with $\varphi_n \cdots \varphi_1(y)=z$, by Eq. (\ref{eqn:Fnweightbound}) and the triangle inequality, we have 
$$\sum_{i=1}^n w(\varphi_i) \geq \sum_{i=1}^n d_X(\varphi_{i-1}\cdots \varphi_1(y), \varphi_i(\varphi_{i-1}\cdots \varphi_1(y))) \geq d_X(y,z),$$
thus 
$d(\gamma_{y,x}, \gamma_{y,x}) \geq d_X(y,z)$. We thus have obtained that
$$ d_X(y,z) \leq d(\Phi(y), \Phi(z)) \leq d_X(y,z)+1,$$
thus the map $\Phi$ is a bijective coarse equivalence with $\rho_-(r)=r$ and $\rho_+(r)=r+1$.
\end{proof}

We end the Subsection with one potential application of the latter construction. 
\begin{remark}\label{remark:BirgetRhodes}
Suppose we have a coarse invariant geometric property (GP), and an algebraic technique to prove that an inverse semigroup has property (GP) then its uniform Roe algebra has property (RP).  We would like to deduce that (GP) implies (RP) for arbitrary metric spaces.

If, for a metric space $X$, we can construct an inverse semigroup $S$ that contains $X$ as a component \textit{and} retains the property (GP), then $C^*_u(S)$ has (RP), and if furthermore (RP) is closed under taking either C*-subalgebras or quotients, then $C^*_u(X)$ (which is both a C*-subalgebra and a quotient) has property (RP) as well. 

Depending on the property (GP), the construction above may not result in an inverse semigroup which retains it. For example, the reader can verify that if $X$ has asymptotic dimension $0$ (see Definition \ref{def:asdim0}), then so does $S$, but if $X$ is sparse (see Definition \ref{def:sparse-extended}), $S$ may not be (in fact it is sparse if and only if $X$ has asymptotic dimension $0$). However, the above construction is just an example of the more general idea of realizing $X$ in an oversemigroup of the rank $1$ maps of $\mathcal I_X$. The interested reader can verify that if we replace the group of bijections in the construction above by its Birget-Rhodes expansion (see \cite{Szen} for the definition) which we equip with the metric inherited naturally from the group, then the resulting metric will inherit both being proper and sparse. We do not give a detailed proof for these arguments as for these particular properties, the respective C*-characterizations are already known for (non-extended) metric spaces by other methods.
\end{remark}

\subsection{Metrics in groupoids} \label{groupoid-metric}
This subsection discusses the relationship between length functions in inverse semigroups, and those in \'etale groupoids, as introduced in~\cite{MW21}. The main result is Theorem~\ref{thm:metric-groupoids-relations} below, which provides a partial dictionary between these two settings.
As this subsection is not referenced in the rest of the paper, and we are expecting that readers interested in these results are already familiar with \'etale groupoids, we have not included a comprehensive introduction, but the reader can find one in~\cite{exel} and references therein. 

We briefly recall the \textit{universal groupoid} associated to an inverse semigroup $S$. This groupoid was introduced in~\cite{P99}, and has been studied extensively since (see, e.g.,~\cite{buss-exel-2009,exel,steinberg-2010} and references therein). Given a (discrete) inverse semigroup $S$, whose semilattice of idempotents we denote by $E$, let $\hat{E}_0$ be the \textit{spectrum} of $E$, that is, the set of non-trivial filters on $E$, equipped with a topology with basis of clopen sets of the form $D_e \cap D_{e_1}^c \cap \cdots \cap D_{e_n}^c$, $e_1, \ldots, e_n < e$, where
$$D_e=\{\xi \in \hat E_0 \colon e \in \xi\}.$$
We see every $e \in E$ as a principal filter $e^\uparrow := \{f \in E \mid f \geq e\} \in \hat{E}_0$. We then have a canonical action $\theta \colon S \curvearrowright \hat{E}_0$ given by 
 partial homeomorphisms $\theta_s \colon D_{s^*s} \rightarrow D_{ss^*}$ where
$$ \theta_s \left(\xi\right) = \left\{e \in E \; \colon \; e \geq sfs^* \; \text{for some} \; f \in \xi\right\} $$
whenever $\xi \in D_{s^*s}$. The \textit{universal groupoid of $S$} is then the groupoid of germs of the action $\theta \colon S \curvearrowright \hat{E}_0$, that is, the set $G$ of equivalence classes $[s, \xi]$ where  $s \in S$ and $\xi \in D_{s^*s} \subset \hat{E}_0$, and $[s, \xi] = [t, \zeta]$ whenever $\xi = \zeta$ and there is some idempotent $e \in E$ such that $\xi \in D_{e}$ and $se = te$. The unit space $G^{(0)}$ of $G$ is then homeomorphic to $\hat{E}_0$, and $G$ is equipped with the topology coming from the unit space such that the resulting $G$ is \'{e}tale. It is then routine to show that $G$ is a locally compact \'{e}tale (in fact, ample) groupoid with Hausdorff unit space. Note, however, that $G$ itself may not be Hausdorff. An efficient characterization of Hausdorffness of $G$ in terms of properties of $S$ is given in~\cite[Theorem~5.17]{steinberg-2010}.

We now introduce the relevant notions about \textit{length functions} in groupoids, as defined in~\cite{MW21}.
\begin{definition} \label{length-func-groupoid}
Let $G$ be a locally compact, Hausdorff \'{e}tale groupoid. We say a function $l \colon G \rightarrow [0, \infty)$ is a \textit{length function} for $G$ if for all $g, h \in G$ the following hold:
\begin{enumerate}
  \item $l(g) = 0$ if and only if $g \in G^{(0)}$;
  \item $l(g) = l(g^{-1})$;
  \item $l(gh) \leq l(g) + l(h)$ whenever $s(g) = r(h)$, i.e., $g$ and $h$ are composable. 
\end{enumerate}
Moreover, we say $l$ is
\begin{enumerate}
\item \textit{proper} if for every $K \subset G \setminus G^{(0)}$, if $\sup_{g \in K} l(g) < \infty$ then $K$ is precompact (i.e. $\ol K$ is compact);
\item \textit{controlled} if for every $K \subset G$, if $K$ is precompact then $\sup_{g \in K} l(g) < \infty$.
\end{enumerate}
\end{definition}

\begin{remark}
In the definition of proper length function above we avoid the unit space in $K$ since we do not assume $G^{(0)}$ to be compact, but only locally compact. Indeed, observe that if $K = G^{(0)}$ is not compact then $K$ cannot be precompact, whereas $\sup_{g \in K} l(g) = 0$. This is the same technicality we have seen in the definition of proper metrics for inverse semigroups (see Definition~\ref{def:metric}), where we only require the finite set $F\Subset S$ to implement paths of non-zero length, or in the case of length functions, where idempotents are excluded from cylinders (see Definition~\ref{def:lengthfunction}).
\end{remark}

\begin{remark}
Ma and Wu~\cite{MW21} study length functions that are in addition continuous, which requires the groupoid to be Hausdorff. Indeed, recall that an \'{e}tale groupoid is Hausdorff if and only if $G^{(0)}$ is closed in $G$. Therefore, if $G$ is not Hausdorff then the preimage $l^{-1}(0)=G^{(0)}$ is not closed. 

Most of what we prove in this section does not use the Hausdorff condition, but following the definition of~\cite{MW21} we will state everything for the Hausdorff case. 
\end{remark}

We record the following result from~\cite[Theorem~4.10]{MW21} for convenience of the reader.
\begin{theorem} \label{thm:mawu}
Up to coarse equivalence, any $\sigma$-compact locally compact Hausdorff \'{e}tale groupoid has a unique proper and controlled continuous length function.
\end{theorem}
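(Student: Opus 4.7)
The plan is to adapt the strategy of Subsections~\ref{subsec:exis-metric} and~\ref{subsec:uniq-metric}, with $\sigma$-compactness of $G$ playing the role of quasi-countability of $S$, and open precompact bisections taking over from finite subsets of the semigroup. Since $G$ is locally compact, $\sigma$-compact, and \'{e}tale, I can choose a countable family $\{V_j\}_{j \in \mathbb{N}}$ of open precompact bisections that together with $G^{(0)}$ covers $G$, closed under inversion. Mirroring Definition~\ref{def:weigthed-metric}, assign each $V_j$ a positive weight $w(V_j) \in \mathbb{N}$ with $w(V_j) = w(V_j^{-1})$ and $\{j : w(V_j) \leq n\}$ finite for every $n$. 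For $g \notin G^{(0)}$ set
\begin{equation*}
l(g) := \inf\left\{ w(V_{j_1}) + \cdots + w(V_{j_k}) : g = h_k \cdots h_1,\ h_i \in V_{j_i} \right\},
\end{equation*}
and $l(g) := 0$ for $g \in G^{(0)}$. Axioms~(1)--(3) of Definition~\ref{length-func-groupoid} are then immediate; continuity follows because each product $V_{j_1} \cdots V_{j_k}$ is open, so each sublevel set is open; properness is inherited from the fact that there are only finitely many products $V_{j_1} \cdots V_{j_k}$ of bounded total weight and that each such product is precompact; and the controlled condition holds since any precompact $K \subseteq G$ is covered by finitely many $V_j$, so $l$ is bounded on it.

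For uniqueness, let $l_1, l_2$ be two proper, controlled, continuous length functions, and following Proposition~\ref{prop:metric-unique} define
\begin{equation*}
\rho_+(r) := \sup\{ l_2(g) : l_1(g) \leq r \}, \qquad \rho_-(r) := \inf\{ l_2(g) : l_1(g) \geq r,\ g \notin G^{(0)} \}.
\end{equation*}
Properness of $l_1$ implies that $\{g \notin G^{(0)} : l_1(g) \leq r\}$ is precompact, whence the controlled property of $l_2$ yields $\rho_+(r) < \infty$. If $\rho_-$ were uniformly bounded by some constant $M$, then for every $n$ one could select $g_n$ with $l_1(g_n) \geq n$ and $l_2(g_n) \leq M$; by properness of $l_2$ the sequence $\{g_n\}$ is precompact, and then controlledness of $l_1$ forces $\{l_1(g_n)\}$ to be bounded, contradicting $l_1(g_n) \to \infty$. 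Hence $\rho_-(r) \to \infty$ as $r \to \infty$, so the identity map implements a coarse equivalence between the coarse structures induced by $l_1$ and $l_2$.

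The principal obstacle is ensuring the length function is genuinely continuous. A naive integer-valued construction $l(g) = \min\{n : g \in K_n\}$ from an ascending exhaustion $\{K_n\}$ of $G$ by precompact open sets typically fails to be continuous, because the level sets $K_n \setminus K_{n-1}$ need not be open. The weighted-bisection formulation above sidesteps this by expressing each sublevel set as a union of open products of bisections. A secondary technical point is the precompactness of such iterated products $V_{j_1} \cdots V_{j_k}$: this uses that groupoid multiplication is continuous and that, thanks to the Hausdorff unit space, the space $G^{(2)}$ of composable pairs is closed in $G \times G$, so the image of a compact subset of $G^{(2)}$ under iterated multiplication remains compact.
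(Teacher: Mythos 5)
First, a remark on the comparison you asked for: the paper does not prove this statement at all. Theorem~\ref{thm:mawu} is recorded verbatim from \cite[Theorem~4.10]{MW21} ``for convenience of the reader,'' so there is no internal proof to measure your argument against; your proposal has to stand on its own as a proof of the Ma--Wu theorem. The uniqueness half of your argument is essentially correct and runs parallel to Proposition~\ref{prop:metric-unique}: finiteness of $\rho_+$ and divergence of $\rho_-$ follow by playing properness of one length function against controlledness of the other, exactly as you say.

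The genuine gap is in the existence half, at the continuity claim. Openness of the products $V_{j_1}\cdots V_{j_k}$ makes the sublevel sets $\{l\leq n\}$ open, but that is only \emph{upper} semicontinuity of $l$. Since your $l$ is integer-valued, actual continuity would force $l$ to be locally constant, i.e.\ the sublevel sets to be \emph{clopen}, and nothing in the construction ensures this: at a point $g\in\overline{V_{j_1}\cdots V_{j_k}}\setminus\left(V_{j_1}\cdots V_{j_k}\right)$ the value of $l$ can jump upward, so $l$ is discontinuous there. Moreover, this cannot be repaired by a cleverer choice of bisections or weights. Consider $G=\mathbb{Z}\times(0,1)$, the transformation groupoid of the trivial action of $\mathbb{Z}$ on the open interval; it is $\sigma$-compact, locally compact, Hausdorff and \'{e}tale. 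Properness in the sense of Definition~\ref{length-func-groupoid} forces $l(1,x)\to\infty$ as $x\to 0$ (a sequence $(1,x_k)$ with $x_k\to 0$ has no compact closure in $G$), whereas a continuous integer-valued function on the connected set $\{1\}\times(0,1)$ must be constant. Hence \emph{no} integer-valued proper length function on this $G$ is continuous, and in particular yours is not: any correct construction must produce real values that blow up near such ``boundaries.'' This is precisely the hard part of the theorem --- it is the groupoid analogue of Struble's theorem, and \cite{MW21} obtain continuity by a more delicate interpolation argument rather than a weighted word length. Your construction does, however, prove the theorem for \emph{ample} Hausdorff groupoids: there one may take the $V_j$ to be compact open bisections, the sublevel sets become clopen (compact subsets of Hausdorff $G$ are closed, and $G^{(0)}$ is clopen), so $l$ is locally constant and hence continuous --- consistent with Remark~\ref{length-loc-const}, and enough for the universal groupoids actually used in this paper, but not for the general \'{e}tale groupoids of the statement.
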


\begin{remark} \label{length-loc-const}
By~\cite[Remark~4.11]{MW21}, the length functions on Hausdorff universal groupoids are locally constant. In general, length functions on ample Hausdorff groupoids are locally constant.
\end{remark}

The following definition will turn out to correspond to continuity in the groupoid language.
\begin{definition}
A length function $l \colon S \rightarrow [0, \infty)$ on an inverse semigroup $S$ is \textit{tightly proper} if for every $r \geq 0$ there is some finite $F \Subset S$ witnessing $r$-properness and such that $\max_{s \in F} l(s) \leq r$.
\end{definition}

\begin{proposition} \label{prop:tight-proper}
Any quasi-countable inverse semigroup $S$ has a tightly proper length function.
\end{proposition}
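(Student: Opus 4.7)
The plan is to show that the length function already built in the proof of Proposition~\ref{prop:metric-existence} is, without modification, tightly proper; no new construction is needed. Recall that proof picked a countable inverse subsemigroup $T \subset S$ with $S = TE \cup E$ via Lemma~\ref{lemma:cnt-quasi-gen:subsem}, fixed an ascending exhaustion $T_1 \subset T_2 \subset \cdots$ of $T$ by finite symmetric subsets satisfying $T_n T_m \subset T_{n+m}$, and defined $l(s) = n$ precisely when $s \in \ol C_n \setminus \ol C_{n-1}$, where $\ol C_n = E \cup T_n E$.

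The first step will be the key observation that each $T_r$ is already contained in $\ol C_r$: for any $t \in T_r$ one has $t = t \cdot (t^* t) \in T_r E \subset \ol C_r$, so $l(t) \leq r$. This immediately gives the tight bound $\max_{t \in T_r} l(t) \leq r$.

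The second step will be to verify that $F := T_r$ witnesses $r$-properness. Given $s \in S$ with $0 < l(s) \leq r$, membership in $\ol C_r \setminus E = T_r E \setminus E$ produces a factorization $s = te$ with $t \in T_r$ and $e \in E$, so $s \leq t \in F$. Combined with the first step, this is exactly the definition of tight properness at radius $r$.

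Since the heavy lifting is already packaged into Proposition~\ref{prop:metric-existence}, there is no real obstacle here. The only new thing to check is the inclusion $T_r \subset \ol C_r$, which is immediate from the factorization $t = t(t^* t)$ available in any inverse semigroup. In essence, the construction in Proposition~\ref{prop:metric-existence} was never using any finite sets beyond the $T_n$'s themselves, and those sets are precisely the ones needed to witness the tight variant.
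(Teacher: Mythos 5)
Your proof is correct and takes essentially the same approach as the paper: the paper's proof is a one-line assertion that the construction of Proposition~\ref{prop:metric-existence} is already tightly proper, and you have supplied precisely the details that make this ``straightforward'' claim true. In particular, your two observations --- that $T_r \subset \ol C_r$ (via $t = t(t^*t)$) gives $\max_{t \in T_r} l(t) \leq r$, and that any $s$ with $0 < l(s) \leq r$ factors as $s = te \leq t$ with $t \in T_r$, so $T_r$ upper bounds the cylinder $C_r$ --- are exactly what is needed.
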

\begin{proof}
It is straightforward to see that the construction of Proposition~\ref{prop:metric-existence} yields a tightly proper length function.
\end{proof}

The following technical results will be useful in the proof of Theorem~\ref{thm:metric-groupoids-relations}.
\begin{lemma} \label{lemma:compact-bisections}
Let $S$ be an inverse semigroup, and let $G$ be its universal groupoid. Then, for any $K \subset G$, the following statements are equivalent:
\begin{enumerate}
\item \label{lemma:compact-bisections:precompact} $K$ is precompact;
\item \label{lemma:compact-bisections:finite} there is a finite $F \Subset S$ and a compact $C \subset G^{(0)}$ such that $K \subset \{[s, x] \mid s \in F, x \in C \cap D_{s^*s}\}$.
\end{enumerate}
\end{lemma}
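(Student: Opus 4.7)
The plan is to exploit the ample structure of the universal groupoid $G$: the basic compact open bisections of $G$ take the form $\{[s, x] : x \in U\}$ where $s \in S$ and $U$ is a compact open subset of $D_{s^*s}$, and each $D_{e} \subset \spectrum$ is itself compact open (a standard fact about universal groupoids; see, e.g.,~\cite{exel}). Since the unit space $\spectrum$ is always Hausdorff, this means $D_e$ is in fact clopen in $\spectrum$, even though $G$ itself may fail to be Hausdorff.

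For (\ref{lemma:compact-bisections:finite}) $\Rightarrow$ (\ref{lemma:compact-bisections:precompact}), I will show the containing set $\{[s,x] : s \in F,\ x \in C \cap D_{s^*s}\}$ is itself compact, which will place $K$ inside a compact set. Since $D_{s^*s}$ is closed in $\spectrum$, the set $C \cap D_{s^*s}$ is the intersection of a compact set with a closed one, and hence compact. The map $x \mapsto [s,x]$ is continuous (it is a homeomorphism onto an open bisection of $G$), so each $\{[s,x] : x \in C \cap D_{s^*s}\}$ is compact; a finite union of these wraps up the direction.

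For the converse, suppose $\overline{K}$ is compact, and cover it by basic compact open bisections of the above form, extracting a finite subcover $\{[s_i, x] : x \in U_i\}$ for $i=1, \ldots, n$. Setting $F := \{s_1, \ldots, s_n\}$ and $C := \bigcup_i U_i$, which is compact as a finite union of compacts in $\spectrum$, the inclusion $U_i \subset D_{s_i^*s_i}$ gives $U_i \subset C \cap D_{s_i^*s_i}$, hence
$$K \subset \overline{K} \subset \bigcup_i \{[s_i, x] : x \in U_i\} \subset \{[s,x] : s \in F,\ x \in C \cap D_{s^*s}\}.$$
The only slightly delicate step is the compactness of each $D_e$; once that is in hand, both directions are routine finite-cover manipulations and no substantial obstacle is expected.
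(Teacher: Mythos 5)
Your proof is correct and takes essentially the same route as the paper's: for (1)~$\Rightarrow$~(2) you cover $\overline{K}$ by bisections and extract a finite subcover (you use basic compact open bisections and set $C = \bigcup_i U_i$, whereas the paper covers by the full bisections $(s, D_{s^*s})$ and takes $C$ to be the closure of the image of $K$ under the source map --- an immaterial difference), and for (2)~$\Rightarrow$~(1) both arguments observe that the containing set is a finite union of continuous images of compact sets, hence compact. The only subtlety, shared equally by the paper's own proof, is the closing inference that a subset of a compact set is precompact: this needs the compact superset to be closed in $G$, which is automatic when $G$ is Hausdorff (the only setting in which the lemma is later applied, in Theorem~\ref{thm:metric-groupoids-relations}) but is not guaranteed in a non-Hausdorff universal groupoid.
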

\begin{proof}
In order to prove that~(\ref{lemma:compact-bisections:precompact}) implies~(\ref{lemma:compact-bisections:finite}), observe that the closure $\overline{K}$ is compact, and covered by the open bisections coming from the inverse semigroup, that is, $\overline{K} \subset \cup_{s \in S} \{[s, x] \mid x \in D_{s^*s}\}$. By compactness there is a finite set $F \subset S$ such that $\overline{K} \subset \cup_{s \in F} \{[s, x] \mid x \in D_{s^*s}\}$. Putting $C := \overline{s^{-1}(K)}$, it follows that $C$ is compact as the sets $D_{s^*s} \subset G^{(0)}$ are compact open, and $C \subseteq \bigcup_{s \in F} D_{s^*s}$ hence $C$ is a closed subset of a compact set. Then the pair $F, C$ satisfies~(\ref{lemma:compact-bisections:finite}).

Suppose now that $F, C$ satisfies~(\ref{lemma:compact-bisections:finite}). Then $\{[s, x] \mid s \in F, x \in C \cap D_{s^*s}\}$ is compact, as it is a finite union of the compact open sets $(s, C \cap D_{s^*s})$, and hence $\overline{K}$ is compact as well, being a closed subset of a compact set.
\end{proof}

\begin{remark}
We observe in passing that Lemma~\ref{lemma:compact-bisections} holds in much greater generality. Indeed, it holds even when $S$ is only a \textit{wide} inverse semigroup of compact open bisections of $G$ (see~\cite[Definition~2.14]{buss-exel-2009}).
\end{remark}

\begin{lemma} \label{lemma:groupoid-convergence}
Let $S$ be an inverse semigroup, and let $G$ be its universal groupoid. Suppose that $\{g_a\}_{a \in A} \subset G$ is a net with $g_a \rightarrow [s, x]$. Then $s(g_a) \rightarrow x \in G^{(0)}$, and $g_a = [s, s(g_a)]$ for all sufficiently large $a$. 
\end{lemma}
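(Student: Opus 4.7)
The plan is to deduce both conclusions directly from the definition of the topology on the universal groupoid $G$ together with the fact that $G$ is \'etale. Recall that a basis for the topology of $G$ is given by the open bisections
$$ (t, U) := \left\{ [t, y] \mid y \in U \right\}, \qquad t \in S, \; U \subset D_{t^*t} \; \text{open}, $$
and on each such $(t, U)$ the source map restricts to the homeomorphism $[t, y] \mapsto y$ onto $U$.

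First, since $G$ is \'etale, the source map is continuous. Applying continuity to $g_a \to [s, x]$ yields $s(g_a) \to s([s, x]) = x$, which is the first claim. For the second claim, observe that the basic open neighborhoods of $[s, x]$ with respect to the fixed representative $s \in S$ are precisely the sets $(s, U)$ for $U$ an open neighborhood of $x$ in $D_{s^*s}$. In particular, $(s, D_{s^*s})$ is itself an open neighborhood of $[s, x]$, so since $g_a \to [s, x]$, eventually $g_a \in (s, D_{s^*s})$. This means that, for all sufficiently large $a$, we can write $g_a = [s, y_a]$ for some $y_a \in D_{s^*s}$. Applying the source map gives $y_a = s([s, y_a]) = s(g_a)$, whence $g_a = [s, s(g_a)]$, as required.

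No real obstacle is anticipated: the only minor subtlety is that the germ $[s, x]$ typically has several representatives $(t, x)$ with $t \neq s$, so one might worry about which $t$ to use when writing $g_a = [t, s(g_a)]$. However, once the representative $s$ is fixed as in the hypothesis, the basic neighborhoods $(s, U)$ of $[s, x]$ are available, and the argument goes through verbatim for this specific $s$.
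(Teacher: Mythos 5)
Your proof is correct and takes essentially the same route as the paper's: both hinge on the observation that $(s, D_{s^*s})$ is an open neighborhood of $[s,x]$, so that eventually $g_a$ lies in it and hence $g_a = [s, s(g_a)]$. The only cosmetic difference is that you cite continuity of the source map (valid, since the universal groupoid is \'etale) for the first claim, whereas the paper re-derives this fact directly by a contradiction argument with the neighborhoods $(s,U)$.
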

\begin{proof}
Put $g_a = [t_a, x_a]$ for some $t_a \in S$ and $x_a \in D_{t_a^*t_a}$. Note that $x_a$ must converge to $x$, for otherwise we would find an open neighborhood $x \in U \subset G^{(0)}$ such that $x_a \not\in U$ for large $a$. In this case, the set $(s, U) := \{[s, y] \mid y \in U\}$ would be an open neighborhood of $[s, x]$, and would contain no point $g_a$, contradicting the assumption that $g_a \rightarrow [s, x]$.

In order to finish the proof, note that $x \in D_{s^*s}$ is an open set 
hence $(s, D_{s^*s})$ is an open neighborhood of $[s,x]$, so $[t_a,x_a] \in (s, D_{s^*s})$ for sufficiently large $a$, that is $g_a=[t_a,x_a] = [s, x_a]$ for sufficiently large $a$ as desired.
\end{proof}

The following theorem states the relationship between length functions (and hence metrics as well) as studied in this paper and those in groupoids as introduced before (coming from~\cite{MW21}). 
\begin{theorem} \label{thm:metric-groupoids-relations}
Let $S$ be a quasi-countable inverse semigroup, and let $G$ be its universal groupoid. Suppose that $G$ is Hausdorff. Then the map
$$ \Phi\left(l\right)\left(\left[s, x\right]\right) = \inf \left\{l(t): t \in S, \; x \in D_{t^*t} \; \text{and} \; \left[t, x\right] = \left[s, x\right]\right\}$$
is an injective map between length functions of $S$ and controlled length functions of $G$,
and 
$$\Psi(l)(s)=\sup_{s \in D_{s^*s}} l([s,x])$$
is its left inverse, i.e. $\Psi \circ \Phi =\id$.

 Moreover, the following assertions hold:
\begin{enumerate}
  \item \label{thm:length-grpd:proper} $l$ is proper if and only if $\Phi(l)$ is proper;
  \item \label{thm:length-grpd:tight} $l$ is tightly proper if and only if $\Phi(l)$ is proper and continuous.
\end{enumerate}
\end{theorem}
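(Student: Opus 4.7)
The plan is to establish the theorem in four logical steps, verifying $\Phi$ is well-defined, computing $\Psi \circ \Phi = \id$, proving the proper/proper equivalence, and finally the tight-proper/continuous equivalence.

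First I would verify that $\Phi(l)$ is a controlled length function on $G$ whenever $l$ is a length function on $S$. The length-function axioms follow by direct checks: symmetry uses $l(t^*) = l(t)$ together with the identity $[t,\theta_t(x)]^{-1} = [t^*, x]$; subadditivity for composable germs $[s, \theta_t(y)] \cdot [t, y] = [st, y]$ holds because if $[s', \theta_t(y)] = [s, \theta_t(y)]$ and $[t', y] = [t, y]$, then $[s't', y] = [st, y]$, so we may take infima over $s'$ and $t'$ independently; and the zero condition uses that a germ representable by an idempotent is a unit. That $\Phi(l)$ is controlled is immediate from Lemma~\ref{lemma:compact-bisections}: any precompact $K$ sits inside $\{[s, y] : s \in F, y \in C \cap D_{s^*s}\}$ for a finite $F$, so $\sup_K \Phi(l) \leq \max_{s \in F} l(s) < \infty$.

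Next I would establish $\Psi \circ \Phi = \id$, which yields injectivity of $\Phi$. The inequality $\Psi(\Phi(l))(s) \leq l(s)$ is automatic from $\Phi(l)([s,x]) \leq l(s)$. The reverse inequality is the main creative computation: evaluating at the principal filter $x_0 := (s^*s)^{\uparrow}$, I would show that any $t \in S$ with $[t, x_0] = [s, x_0]$ must satisfy $t \geq s$ in the natural partial order. Indeed, such a germ equality supplies an idempotent $e \in x_0$ with $te = se$, and $e \geq s^*s$ forces $se = s \cdot s^*s \cdot e = s \cdot s^*s = s$; thus $s = te \leq t$, so $l(t) \geq l(s)$, giving $\Phi(l)([s, x_0]) = l(s)$.

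For part~(\ref{thm:length-grpd:proper}), both directions exploit Lemma~\ref{lemma:compact-bisections}. For the forward direction, given $K \subset G \setminus G^{(0)}$ with $\sup_K \Phi(l) \leq r$, for each $g \in K$ I would pick $t_g$ with $[t_g, s(g)] = g$ and $l(t_g) \leq r + 1$; necessarily $t_g \notin E$, so $t_g \in C_{r+1}$, and properness of $l$ yields $t_g \leq f_g$ for some $f_g$ in a finite set $F$. Since $g = [f_g \cdot t_g^*t_g, s(g)] = [f_g, s(g)]$, the image $s(K) \subseteq \bigcup_{f \in F} D_{f^*f}$ lies in a compact set, so by Lemma~\ref{lemma:compact-bisections}, $K$ is precompact. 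For the converse, the set $K_r := \{[s, (s^*s)^{\uparrow}] : s \in C_r\}$ lies in $G \setminus G^{(0)}$ with $\Phi(l)|_{K_r} = l \leq r$ by the key computation; properness of $\Phi(l)$ makes $K_r$ precompact, and Lemma~\ref{lemma:compact-bisections} then writes each $[s, (s^*s)^{\uparrow}]$ as $[f_s, (s^*s)^{\uparrow}]$ with $f_s$ in a finite set $F$, forcing $s \leq f_s$ so that $F$ finitely upper bounds $C_r$.

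For part~(\ref{thm:length-grpd:tight}), by part~(\ref{thm:length-grpd:proper}) only the equivalence between tight properness and continuity remains. Since $G$ is Hausdorff ample, Remark~\ref{length-loc-const} reduces continuity to local constancy. The workhorse is the alternative formula $\Phi(l)([s, y]) = \inf\{l(u) : u \leq s, \ y \in D_{u^*u}\}$, which follows from the observation that $[t,y] = [s,y]$ is witnessed by some common lower bound $u \leq s, u \leq t$. For the forward direction, tight properness at $r = \Phi(l)([s,x])$ allows one to produce a witness $u \leq s$ at which the infimum is attained, and then $(s, D_{u^*u})$ is a clopen neighborhood of $[s,x]$ on which $\Phi(l)$ is constantly $l(u)$; the upper bound $\leq l(u)$ is automatic, and the lower bound uses that the tight witness $F$ also controls any potential smaller $u' \leq s$ with $y \in D_{{u'}^*u'}$, forcing $x \in D_{{u'}^*u'}$ and contradicting minimality. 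For the reverse, local constancy of $\Phi(l)$ at each $[s, (s^*s)^{\uparrow}]$ with $l(s) \leq r$ yields an open bisection $[s, D_{e_s}]$ on which $\Phi(l) = l(s)$; interpreting this back in $S$, the element $s e_s$ (or a suitable representative) can be upgraded to a member of a finite set $F$ with $l(F) \leq r$ that still dominates $s$, giving tight properness. The main obstacle I foresee is this last part: making the "lifting" of local constancy to a tight witness finite precisely, since it requires a simultaneous selection over all $s \in C_r$ using precompactness of the associated subset of $G$, and carefully handling the interplay between the natural partial order on $S$ and the topology of $G^{(0)} = \hat{E}_0$.
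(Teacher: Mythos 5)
Your first three steps (well-definedness of $\Phi$, the computation $\Psi\circ\Phi=\id$ via the principal filter $(s^*s)^\uparrow$, and both directions of assertion~(\ref{thm:length-grpd:proper})) follow the paper's argument essentially verbatim and are correct; the only omission there is the check that $\Psi$ sends arbitrary controlled length functions on $G$ to length functions on $S$ (the paper verifies the triangle inequality for $\Psi(l)$), which your direct computation of the composite sidesteps but which is part of what makes the phrase ``left inverse'' meaningful.

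The genuine gap is in assertion~(\ref{thm:length-grpd:tight}). For the forward direction you claim that if $u\leq s$ attains the minimum defining $\Phi(l)([s,x])$, then $\Phi(l)$ is constantly $l(u)$ on the neighbourhood $(s,D_{u^*u})$, the lower bound coming from a ``forcing $x\in D_{u'^*u'}$'' step. There is no such forcing: $u'^*u'\in y$ says nothing about membership of $u'^*u'$ in $x$, and the constancy claim is simply false. Concretely, take $S=B_2=\{a,a^*,aa^*,a^*a,0\}$, the five-element Brandt semigroup (finite, hence tightly proper, with discrete --- in particular Hausdorff --- universal groupoid), $s=u=a$, $x=(a^*a)^\uparrow$, $y=0^\uparrow$. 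Then $y\in D_{a^*a}$, but $[a,y]=[0,y]$ is a unit, so $\Phi(l)([a,y])=0<l(a)=\Phi(l)([a,x])$: $\Phi(l)$ is not constant on $(s,D_{u^*u})$ even in this tightly proper, Hausdorff example. The underlying issue is that neighbourhoods of $x$ in $\hat{E}_0$ have the form $D_e\cap D_{e_1}^c\cap\cdots\cap D_{e_n}^c$; one can only hope for constancy after removing finitely many sets $D_{e_i}$ along which the infimum drops, and proving that finitely many suffice is precisely where the content lies. The paper avoids constructing neighbourhoods altogether: given a net $g_a\to[s,x]$ with $l(s)=\Phi(l)([s,x])$, it reduces via Lemma~\ref{lemma:groupoid-convergence} to $g_a=[s,x_a]$, notes the inequality $\Phi(l)([s,x_a])\leq l(s)$ is automatic, and, if $\Phi(l)([s,x_a])\leq r<l(s)$ on a subnet, uses tight properness to replace the minimizing representatives by elements of the finite set $F_r$, passes to a subnet with constant representative $m\in F_r$, and invokes Hausdorffness (uniqueness of limits) to conclude $[m,x]=[s,x]$, whence $l(s)\leq l(m)\leq r$, a contradiction. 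Your backward direction rests on the same misreading of the topology (local constancy on a set of the form $(s,D_{e_s})$) and is in any case left as an acknowledged obstacle; the paper instead proves the contrapositive, exhibiting for $l$ proper but not tightly proper an explicit convergent sequence $[s_i,(s_i^*s_i)^\uparrow]\to[m,\bigcap_i(s_i^*s_i)^\uparrow]$ along which $\Phi(l)$ fails to converge. So your plan establishes the first part and assertion~(\ref{thm:length-grpd:proper}), but assertion~(\ref{thm:length-grpd:tight}) is not proved in either direction.
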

\begin{proof}
First, observe that the infimum in the definition of $\Phi(l)$ is actually a minimum, as the image of $l$ is a discrete set, furthermore the minimum is attained on some $l(t)$ with $t \leq s$. The fact that $\Phi(l)$ is a length function follows from the respective properties of $l$.
If $\Phi(l)([s,x]) = 0$ then $[s,x] = [e,x]$ for some $e \in E$, and hence $[s,x] \in G^{(0)}$. Likewise, $\Phi(l)([s,x]) = \Phi(l)([s^*, sx])$ follows from the fact that $l(s) = l(s^*)$ for every $s \in S$. For the triangle inequality, consider a germ $[st,x]=[s,\theta_t(x)][t,x]$, and assume $\Phi(l)([s,\theta_t(x)])=l(u)$ where $[s,\theta_t(x)]=[u,\theta_t(x)]$, and $\Phi(l)([t,x])=l(v)$ where $[t,x]=[v,x]$. Then $[st,x]=[uv,x]$ so 
$$\Phi(l)([st,x]) \leq l(uv) \leq l(u)+l(v)=\Phi(l)([s,\theta_t(x)])+\Phi(l)([t,x])$$
indeed.

Furthermore, observe that $\Phi(l)$ is always controlled (in the sense of Definition~\ref{length-func-groupoid}). In fact, if $K \subset G$ is precompact then by Lemma~\ref{lemma:compact-bisections} we may find a finite $F \Subset S$ and a compact $C \subset G^{(0)}$ such that $\overline{K} \subset \{[s,x] \; \colon \; s \in F \; \text{and} \; x \in C \cap D_{s^*s}\}$. Therefore
$$ \sup_{g \in K} \Phi\left(l\right)\left(g\right) \leq \max_{s \in F} \sup_{x \in C \cap D_{s^*s}} \Phi\left(l\right)\left(\left[s, x\right]\right) \leq \max_{s \in F} l\left(s\right) < \infty, $$
as desired.

Considering the map $\Psi$, note, again, the supremum in $\Psi$ is always a maximum, i.e., it is finite, since $l$ is assumed controlled and the sets $D_{s^*s}$ are compact (and open). 
We show that $\Psi$ indeed maps controlled length functions on $G$ to length functions on $S$ -- the only non-trivial property of $\Psi(l)$ is the triangle inequality. 
As we have $\Psi(l)(st)=l([st,x])$ for some $x \in D_{t^*s^*st}$, then 
$$\Psi(l)(st)\leq l([s,\theta_t(x)])+l([t, x]) \leq \Psi(l)(s)+\Psi(l)(t).$$


We next prove that $\Psi(\Phi(l)) = l$ for every length function $l$ on $S$, which also implies the injectivity of $\Phi$. Indeed, observe we have to prove that
$$ \sup_{x \in D_{s^*s}} \inf_{\substack{t \in S \\ \left[t, x\right] = \left[s, x\right]}} l\left(t\right) = l\left(s\right) $$
for all $s \in S$. First, note that the inequality $\leq$ is immediate, as $\inf_{[t, x] = [s, x]} l(t) \leq l(s)$ for all $x \in D_{s^*s}$. The $\geq$ inequality is more subtle, and follows from the observation that
$$ \sup_{x \in D_{s^*s}} \inf_{\substack{t \in S \\ \left[t, x\right] = \left[s, x\right]}} l\left(t\right) \geq \inf_{\substack{t \in S \\ \left[t, \left(s^*s\right)^\uparrow\right] = \left[s, \left(s^*s\right)^\uparrow\right]}} l\left(t\right) = l\left(s\right), $$
since if $[t, (s^*s)^\uparrow] = [s, (s^*s)^\uparrow]$ then $se = te$ for some $e \in E$ such that $e \geq s^*s$, which implies that $s = ss^*s = se = te$, i.e., $t \geq s$, and so $l(t) \geq l(s)$, finishing the proof.

We now prove assertion~(\ref{thm:length-grpd:proper}). First, suppose that $l$ is a proper length function on $S$, and let $K \subset G \setminus G^{(0)}$ be such that $r := \sup_{g \in K} \Phi(l)(g) < \infty$. That is, for every $g \in K$ there is a $t \in S$ such that $0<l(t) \leq r$, $s(g) \in D_{t^*t}$ and $[t, s(g)] = g$. Now, by the properness of $l$ we may find a finite $F \Subset S$ that upper bounds the cylinder $C_r$, i.e., if $0 < l(t) \leq r$ then $t \leq m$ for some $m \in F$. 
In particular $D_{t^*t} \subseteq D_{m^*m}$ so $s(g)\in D_{m^*m}$ and $[t,s(g)]=[m,s(g)]$.
It then follows that
$$ K \subset \left\{\left[m,s\left(g\right)\right] \; \colon \; s\left(g\right) \in D_{m^*m} \; \text{and} \; m \in F \; \text{and} \; g \in K \right\}, $$
which, by Lemma~\ref{lemma:compact-bisections}, implies that $K$ is precompact, as desired.

For the converse, let $r \geq 0$ be given, and consider the set:
$$ K_r := \left\{\left[s, \left(s^*s\right)^\uparrow\right] \; \colon \; 0 < l\left(s\right) \leq r\right\}. $$
Then $K_r$ contains no units of $G$, and $\sup_{g \in K_r} \Phi(l)(g) \leq r < \infty$. Thus, since $\Phi(l)$ is assumed to be proper, we have that $K_r$ is precompact which, by Lemma~\ref{lemma:compact-bisections}, implies we may find a finite set $F_r \Subset S$ and a compact set $C_r \subset G^{(0)}$ such that $K_r \subset \{[s, x] \mid s \in F_r, x \in C_r \cap D_{s^*s}\}$. We claim that $F_r$ then witnesses $r$-properness of $l$. For this, let $s \in S$ be such that $0 < l(s) \leq r$. Then $[s, (s^*s)^\uparrow] \in K_r$, and hence $[s, (s^*s)^\uparrow] = [m, (s^*s)^\uparrow]$ for some $m \in F_r$. By the germs relations this means that $se = me$ for some $e \in E$ such that $(s^*s)^\uparrow \in D_e$. This implies that $e \in (s^*s)^\uparrow$, meaning that $e \geq s^*s$. In turn, this implies that $s = ss^*s = se = me$, i.e., $F_r \ni m \geq s$, as desired.

We now turn our attention to assertion~(\ref{thm:length-grpd:tight}). Let $l$ be tightly proper and observe that, by~(\ref{thm:length-grpd:proper}), $\Phi(l)$ is a proper length function, so it is enough to prove that it is also continuous. For this, let $\{g_a\}_{a \in A} \subset G$ be a convergent net, say to $[s, x] \in G$. Using the fact that the infimum in the definition of $\Phi(l)$ is in fact a minimum, by choosing the representative of $[s,x]$ on which the minimum is attained, we have $l(s) = \Phi(l)([s, x])$. 
By Lemma~\ref{lemma:groupoid-convergence}, we may, without loss of generality, assume that $g_a = [s, x_a]$, where $x_a = s(g_a)$, and $x_a \rightarrow x$. Then
$$ \Phi\left(l\right)\left(\left[s, x_n\right]\right) = \inf_{\substack{t \in S \\ \left[t, x_n\right] = \left[s, x_n\right]}} l\left(t\right) \leq l\left(s\right) = \Phi\left(l\right)\left(\left[s, x\right]\right). $$
The other inequality follows from the tight properness assumption of $l$. Indeed, suppose that $\Phi(l)([s,x_a]) \not\rightarrow l(s)$, and note that, since the values that $\Phi(l)$ takes are discrete (see~\cite[Lemma~4.14]{MW21}), it must be the case that $\Phi(l)([s,x_a]) \leq r < l(s)$ for some positive numbers $r \geq 0$ in some subnet $B$. Then let $F_r \Subset S$ be a finite set that witnesses the $r$-tight properness of $l$, and observe that $[s, x_a] = [m_a, x_a]$ for some $m_a \in F_r$. Since $F_r$ is finite this implies that, possibly passing to a subnet, we may assume that $[s, x_a] = [m, x_a]$ for all $a$, and hence $[m, x_a] \rightarrow [m, x] = [s, x]$ since $G$ is assumed Hausdorff and limit points are unique in Hausdorff spaces. It would then follow that
$$ l\left(s\right) = \Phi\left(l\right)\left(\left[s, x\right]\right) = \Phi\left(l\right)\left(\left[m, x\right]\right) \leq l\left(m\right) \leq \max_{n \in F_r} l\left(n\right) \leq r < l\left(s\right)$$
by the choice of $F_r$. This is a contradiction, and hence $\Phi(l)([s, x_a]) \rightarrow \Phi(l)[s, x]$, as desired.

For the converse, assume that $l$ is proper, but not tightly proper for some $r$. Let $F$ be the finite set witnessing the properness of $r$. Consider the set
$\ol C_r=\{s \in S: l(s)\leq r\}$. Then there is no finite subset of $\ol C_r$ which is an upper bound of $\ol C_r$, and hence there is a set of elements $s_1, s_2, \ldots$
with $l(s_i) \leq r$ and no upper bound in $\ol C_r$. By properness, there is $m \in F$ such that $s_i < m$ for infinitely many indices $i$ -- let us pass to this subsequence. Then, in particular, $s_i^*s_i \leq m^*m$, and thus $[m, \bigcap_i (s_i^*s_i)^\uparrow] \in G$. 

Notice that $[s_i, (s_i^*s_i)^\uparrow] \to [m, \bigcap_i (s_i^*s_i)^\uparrow]$, and $\Phi([s_i, (s_i^*s_i)^\uparrow])\leq l(s_i) \leq r$. Whereas
$$\Phi([m, \cap_i (s_i^*s_i)^\uparrow])=\inf\{l(t): t^*t \geq s_i^*s_i, m \geq t\}=\inf\{l(t): t \geq s_i\}> r$$
as the chain $s_i$ has no upper bound in $\ol C_r$. Thus $\Phi(l)$ is not continuous.
\end{proof}

\begin{remark}
From Theorem~\ref{thm:metric-groupoids-relations} it is apparent that continuity of a length function on a groupoid is not a coarse invariant, as being tightly proper is not a coarse invariant. However, by Theorem~\ref{thm:mawu}, we can always find a continuous length function on $G$, and by Proposition~\ref{prop:tight-proper} we can always find a tight proper length function on $S$. Furthermore, note both length functions are unique up to coarse equivalence.
\end{remark}

  \section{Uniqueness of the Uniform Roe Algebra of an inverse semigroup} \label{sec:roealg}
     This section is dedicated to the uniform Roe algebra of an inverse semigroup.
     The goal is to show that there are several ways to construct a (usually non-separable) C*-algebra $C_u^*(S)$ from $S$ that inherits much of the geometric aspects of the semigroup. This C*-algebra will then be studied in the upcoming Section~\ref{sec:asymdim-lf}. The first way to construct $C_u^*(S)$ is via the following \textit{canonical action} of $S$ on $\ell^\infty(S)$.
  \begin{definition} \label{def:action}
    Let $S$ be an inverse semigroup and let $s \in S$.
    \begin{enumerate}
      \item Given the idempotent $e \in E$ let $I_{e}$ be the two-sided closed ideal
        $$ I_{e} := \left\{f \in \ell^\infty\left(S\right) \mid \text{supp}\left(f\right) \subset e S\right\}. $$
      \item Given $f \in I_{s^*s}$ let $sf \in I_{ss^*}$ be defined by $(sf)(y) := f(s^*y)$ for any $y \in ss^*S=sS$.
    \end{enumerate}
  \end{definition}
  Note that $s$ defines a $*$-isomorphism $I_{s^*s} \rightarrow I_{ss^*}$, where $f \mapsto sf$, and the corresponding action of $S$ on $\ell^\infty(S)$ is in fact the dual of the Wagner-Preston representation. Moreover, recall that given any action of a discrete inverse semigroup $S$ on a C*-algebra $A$ we may form the \textit{reduced crossed product of $A$ by $S$}, that is, a C*-algebra $A \rtimes_r S$ that encapsulates both $A$ and the action of $S$ on $A$~\cite{buss-exel-2009,buss-martinez-2023}. However, since this construction is quite technical and will not be needed here in full generality, we will only prove that the C*-algebras we shall henceforth consider are indeed reduced crossed products as in~\cite{buss-exel-2009,buss-martinez-2023} in Theorem~\ref{thm:roealg:unique} below. For now, consider the representations given by:
  $$ \pi \colon \ell^\infty(S) \rightarrow \mathcal{B}\left(\ell^2\left(S\right) \otimes \ell^2\left(S\right)\right), \quad \; \pi\left(f\right)\left(\delta_x \otimes \delta_y\right) :=
  \left\{
  \begin{array}{rl}
  f\left(yx\right) \delta_x \otimes \delta_y & \text{if} \;\; xx^* = y^*y, \nonumber \\
  0 & \text{otherwise,}
  \end{array}
  \right.$$
  and 
  $$ 1 \otimes v \colon S \rightarrow \mathcal{B}\left(\ell^2\left(S\right) \otimes \ell^2\left(S\right)\right), \quad \; \left(1 \otimes v\right)\left(s\right)\left(\delta_x \otimes \delta_y\right) :=
  \left\{
  \begin{array}{rl}
  \delta_x \otimes \delta_{sy} & \text{if} \;\; y \in s^*S, \nonumber \\
  0 & \text{otherwise.}
  \end{array}
  \right. $$
Notice that $(1 \otimes v)(s)=1 \otimes v_s$, where $v_s$ is given in Subsection \ref{subsec:pre-invsem}. Furthermore, observe that the representations are covariantly intertwined, that is, they satisfy that:
\[
  \left(1 \otimes v_s\right) {\pi}\left(f\right) \left(1 \otimes v_{s^*}\right) = {\pi} \left(s f\right)
\]
for all $f \in I_{s^*s}$ and $s \in S$. In this setting, let $\mathcal{R}_S$ be the C*-algebra generated by $\{\pi(f) (1 \otimes v_s) \mid s \in S, f \in I_{s^*s}\}$, which is canonically embedded in $\mathcal{B}(\ell^2(S) \otimes \ell^2(S))$.
The importance of the action in Definition~\ref{def:action} comes from the following observation, whose proof is given in~\cite{LM21} (observe that the countability assumption in~\cite{LM21} is never used). Nevertheless, we include a sketch of the proof for convenience.
  \begin{proposition} \label{prop:unifroealg:action}
    Let $S$ be an inverse semigroup, and let $S$ act on $\ell^\infty(S)$ as above. Then
    $$ C^*\left(\ell^\infty\left(S\right) \cdot \left\{v_s \mid s \in S\right\}\right) \cong \mathcal{R}_S, $$
    where the left hand side is sitting in $\mathcal{B}(\ell^2(S))$ and the right hand side in $\mathcal{B}(\ell^2(S) \otimes \ell^2(S))$.
  \end{proposition}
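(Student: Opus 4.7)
My plan is to spatially implement the isomorphism by means of an isometry $U \colon \ell^2(S) \to \ell^2(S) \otimes \ell^2(S)$ defined by $U\delta_y := \delta_{y^*y} \otimes \delta_y$. The vectors $\{\delta_{y^*y} \otimes \delta_y\}_{y \in S}$ are orthonormal (their second tensor factors are distinct), so $U$ is an isometry. I would then verify, on basis vectors, the two intertwining identities
\begin{equation*}
\tilde\pi(f)\, U = U M_f
\qquad \text{and} \qquad
(1 \otimes v_s)\, U = U v_s
\end{equation*}
for every $f \in \ell^\infty(S)$ and $s \in S$, where $M_f$ denotes multiplication by $f$ on $\ell^2(S)$. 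The first identity follows by unpacking the definition of $\tilde\pi$ and using $y \cdot y^*y = y$, which gives $\tilde\pi(f) U\delta_y = f(y)\, U\delta_y$. The second uses that $(sy)^*(sy) = y^*y$ whenever $y \in s^*S$, while both sides vanish otherwise.

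These intertwinings show that $K := U(\ell^2(S))$ is a closed subspace invariant under $\tilde\pi(\ell^\infty(S))$ and $(1 \otimes v)(S)$, hence under the whole reduced crossed product. The compression $\Phi(a) := U^* a U$ is then a $*$-homomorphism $\ell^\infty(S) \rtimes_r S \to \mathcal B(\ell^2(S))$ sending $\tilde\pi(f) \mapsto M_f$ and $(1 \otimes v_s) \mapsto v_s$. Its image is therefore precisely $C^*(\ell^\infty(S) \cup \{v_s \mid s \in S\})$, which gives surjectivity.

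The main obstacle is proving that $\Phi$ is injective. My plan is to use the faithful conditional expectation $E_r \colon \ell^\infty(S) \rtimes_r S \to \tilde\pi(\ell^\infty(S))$ that comes with the construction of the reduced crossed product, together with the diagonal conditional expectation $E_d \colon C^*(\ell^\infty(S) \cup \{v_s\}) \to \ell^\infty(S)$, which is faithful because $\ell^\infty(S)$ is a maximal abelian subalgebra of $\mathcal B(\ell^2(S))$. Checking on generators of the form $\tilde\pi(f)(1 \otimes v_s)$ shows that $E_d \circ \Phi = \Phi \circ E_r$; since $\Phi$ restricts to an isomorphism on $\tilde\pi(\ell^\infty(S))$ and $E_r$ is faithful, any $a \in \ker \Phi$ satisfies $\Phi(E_r(a^*a)) = E_d(\Phi(a^*a)) = 0$, whence $E_r(a^*a) = 0$ and hence $a = 0$. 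The technical heart of the argument is verifying the intertwining of the two conditional expectations and handling the mild subtlety that $\tilde\pi$ is not unital (its image of the unit $1_{\ell^\infty(S)}$ is the projection onto $\overline{\operatorname{span}}\{\delta_x \otimes \delta_s : x \in s^*S\}$, which carries the crossed product). Once these are sorted out, the rest is routine bookkeeping.
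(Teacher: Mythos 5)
The first half of your argument is correct, and it is essentially a variant of the device the paper itself uses. Your $U$ is a well-defined isometry, both intertwining identities hold (the computations $y\cdot y^*y=y$ and $(sy)^*(sy)=y^*y$ for $y\in s^*S$ are exactly right), the range of $U$ is invariant under the self-adjoint set $\tilde\pi(\ell^\infty(S))\cup(1\otimes v)(S)$ and hence reducing, so compression gives a $*$-homomorphism $\Phi$ with $\tilde\pi(f)\mapsto M_f$ and $1\otimes v_s\mapsto v_s$, surjective onto $C^*(\ell^\infty(S)\cup\{v_s\})$. (The paper instead conjugates by the partial isometry $u(\delta_x\otimes\delta_y)=\delta_x\otimes\delta_{yx}$, defined when $xx^*=y^*y$; the range of your $U$ is spanned precisely by the basis vectors fixed by $u$, so the two devices are close relatives. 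The paper defers all remaining details, including injectivity, to \cite{LM21}.)

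The injectivity step, however, has a genuine gap, and it sits exactly where inverse semigroups stop behaving like groups. The faithful conditional expectation $E_r\colon \ell^\infty(S)\rtimes_r S\to\tilde\pi(\ell^\infty(S))$ you invoke does not ``come with the construction'': it cannot exist unless $S$ is a group. The culprit is your parenthetical claim that $p:=\tilde\pi(1)$ ``carries the crossed product''. If $S$ is not a group, there is an idempotent $e\in E$ with $eS\neq S$; choose $x\notin eS$. Then $\delta_x\otimes\delta_e$ is fixed by $1\otimes v_e$ but killed by $p$, so the element $a:=(1\otimes v_e)-p(1\otimes v_e)-(1\otimes v_e)p+p(1\otimes v_e)p$ of the crossed product is positive (it equals $\left[(1\otimes v_e)(1-p)\right]^*\left[(1\otimes v_e)(1-p)\right]$), nonzero, and satisfies $pap=0$. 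Any conditional expectation $E$ onto a subalgebra whose unit is $p$ obeys $E(x)=pE(x)p=E(pxp)$, hence kills $a$; so no conditional expectation onto $\tilde\pi(\ell^\infty(S))$ is faithful. Note also that the intertwining $E_d\circ\Phi=\Phi\circ E_r$ would force $E_r(\tilde\pi(f)(1\otimes v_s))=\tilde\pi(f\cdot 1_{F_s})$, where $F_s=\{x\in S: sx=x=s^*sx\}$ is the fixed-point set of $\lambda_s$; unlike in the group case this is nonzero for many non-idempotent $s$ (for instance every $v_s$ fixes $\delta_0$ when $S$ has a zero), so $E_r$ is not any standard slice map that group intuition supplies.

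In fact no injectivity argument can succeed in the spatial picture you work in, because there the statement itself fails: the paper's sketch of the construction omits the identifications of the genuine Buss--Exel reduced crossed product, and your proof never engages with them. Concretely, for the two-element semilattice $S=\{1,0\}$, with respect to the ordered basis $(\delta_1\otimes\delta_1,\ \delta_0\otimes\delta_1,\ \delta_1\otimes\delta_0,\ \delta_0\otimes\delta_0)$ one gets $\tilde\pi(f)=\mathrm{diag}(f(1),f(0),0,f(0))$ and $1\otimes v_0=\mathrm{diag}(0,0,1,1)$, which together generate the full diagonal $\mathbb{C}^4$, whereas $C^*(\ell^\infty(S)\cup\{v_s\})=\ell^\infty(S)\cong\mathbb{C}^2$; your $\Phi$ is the compression to coordinates $1$ and $4$ and has a two-dimensional kernel. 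What makes the proposition true is that in the construction of \cite{buss-exel-2009} representations must respect the inclusions $I_{ss^*}\subseteq I_{tt^*}$ for $s\leq t$, which forces $1\otimes v_e$ to be identified with the image of $1_{eS}$ for every idempotent $e$ --- precisely the relation the naive algebra above (and your argument) lacks. Once one works with that object, its canonical diagonal really is the image of $\ell^\infty(S)$ and an expectation argument along your lines, or the paper's conjugation by $u$, can be completed; this is the content the paper outsources to \cite{LM21}, and it is the actual heart of the proof rather than routine bookkeeping.
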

  \begin{proof}
    Consider the operator $u \colon \ell^2(S) \otimes \ell^2(S) \rightarrow \ell^2(S) \otimes \ell^2(S)$ given by:
    $$ u \left(\delta_x \otimes \delta_y\right) = \left\{
                \begin{array}{rl}
                  \delta_x \otimes \delta_{yx} & \text{if} \;\; xx^* = y^*y, \\
                  0 & \text{otherwise}.
                \end{array} \right. $$
    Then a computation shows that $u$ commutes with $1 \otimes v_s$ and $u \pi(f) u^* = (1 \otimes f) uu^*$. The claim of the proposition then follows.
  \end{proof}

  For the following, recall from~\cite{buss-martinez-2023} (and references therein) the construction of the \emph{reduced crossed product} $\ell^\infty(S) \rtimes_r S$, where the action is the one given in Definition~\ref{def:action}. In~\cite[Section~2]{buss-martinez-2023}, however, this construction is carried out for a \emph{Fell bundle} $\mathcal{A} = (A_s)_{s \in S}$ over the inverse semigroup $S$. These Fell bundles form a vastly more general body of objects than the particular action of Definition~\ref{def:action}.
  Since we do not need such generality in this paper, we simply recall that an action of an inverse semigroup $S$ on a C*-algebra $A$ (e.g.\ Definition~\ref{def:action}) yields a Fell bundle $\mathcal{A} = (A_s)_{s \in S}$ as in \cite{buss-exel-2009,buss-martinez-2023} via declaring
  $A_s \delta_s := I_{ss^*} \delta_s$, where $I_{ss^*}$ is as in Definition~\ref{def:action}. The construction detailed in~\cite{buss-exel-2009,buss-martinez-2023} (and references therein) defines a crossed product, which we denote by $\ell^\infty(S) \rtimes_r S$. For the convenience of the reader, we detail this construction below.

  For any $s \in S$ we let $I_{s,1}$ be the (C*-)ideal of $\ell^\infty(S)$ generated by every $I_e$ where $e \leq s$ and $e \in E$ (see \cite[Definition~2.12]{buss-martinez-2023}). Completing in the weak topology yields a (von Neumann) ideal $I_{s,1}^{**} \subset \ell^\infty(S)^{**}$.\footnote{In~\cite{buss-martinez-2023} the double dual $A^{**}$ is denoted as $A''$. However, the notation $A^{**}$ is more common in functional analysis, and hence we stick to it.} We denote the canonical inclusion map of $\ell^\infty(S)$ into $\ell^\infty(S)^{**}$ by $\iota$. Since von Neumann ideals are always unital, we let $1_{s,1}$ be the unit of $I_{s,1}^{**}$. The \emph{conditional expectation} of the action is given in \cite[Definition~2.25]{buss-martinez-2023} as
  \begin{equation} \label{eq:cond-exp}
    P \colon \oplus_{s \in S}^{alg} \ell^\infty\left(ss^*S\right) v_s \to \left(\ell^\infty\left(S\right)\right)^{**}, \;\; (f_s v_s)_{s \in S} \mapsto \sum_{s\in S}\iota (f_s) 1_{s,1},
  \end{equation}
	where $f_s \in \ell^\infty\left(ss^*S\right)$ is viewed naturally as an element of $\ell^\infty(S)$ by extending by $0$ out of $ss^*S$. We must highlight here that the domain of $P$ above is \emph{not} made of $c_0$ sums, but of finite ones. 

  Quotienting the domain of $P$ by its \emph{nucleus}, namely $\mathcal{N}_P = \{x \mid P(x^*x) = 0\}$, yields a *-algebra, which we denote by $\ell^\infty(S) \rtimes_{alg} S$ (see \cite[Proposition~2.27]{buss-martinez-2023}). Then, the C*-algebra $\ell^\infty(S) \rtimes_r S$ is defined to be the unique C*-algebra that contains $\ell^\infty(S) \rtimes_{alg} S$ densely and such that $P$ induces a faithful conditional expectation on $\ell^\infty(S) \rtimes_r S$. This is the definition that we shall use in the following.
  \begin{theorem} \label{thm:roealg:unique}
    Let $S$ be a quasi-countable inverse semigroup, and let $d$ be a proper and right subinvariant metric. Then $\ell^\infty(S) \rtimes_r S$ and $\mathcal{R}_S$ are *-isomorphic. Moreover, if $S$ is a monoid, then they are isomorphic to $C_u^*(S, d)$ as well.
  \end{theorem}
  \begin{proof}
    Note that, by Proposition~\ref{prop:unifroealg:action}, we may see $\mathcal{R}_S$ embedded in $\mathcal{B}(\ell^2(S) \otimes \ell^2(S))$ or $\mathcal{B}(\ell^2(S))$ as needed. 

    We first prove that $\mathcal{R}_S$ is contained in $C_u^*(S, d)$ as subsets of $\mathcal{B}(\ell^2(S))$. 
    In fact, we prove that $\operatorname{span}\left(\ell^\infty\left(S\right) \cdot \left\{v_s \mid s \in S\right\}\right)$ consists exactly of finite propagation operators in $\mathcal{B}(\ell^2(S))$, which by taking the C*-closures of both sides, implies the statement.
    It is clear that every generator $f v_s$ of $\mathcal{R}_S$ is of finite propagation. Indeed, note that if $\langle f v_s \delta_x, \delta_y \rangle \neq 0$ then $sx = y$ and $s^*sx = x$, which implies that $x \lrel y$ and $d(x, y) \leq d(s^*s, s)$ by right subinvariance of $d$. It follows that every operator in $\operatorname{span}\left(\ell^\infty\left(S\right) \cdot \left\{v_s \mid s \in S\right\}\right)$ is of finite propagation.

    We now prove the reverse inclusion when $S$ is a monoid, which is more subtle. Let $T \in C_u^*(S, d)$ be of propagation $r > 0$. As $d$ is proper and $S$ is a monoid there is some finite $F \Subset S$ such that for any $x,y \in S$ with  $d(x, y) \leq r$, we have $y = tx$ for some $t \in F$. Let $\{t_{x,y}\}_{x, y \in S} \subset F$ be such a choice, i.e., $t_{x,y} x = y$. Then for each $s \in F$,
    we may define an operator $f_s \in \ell^\infty (S)$ by
     $$f_s\left(y\right) :=
          \begin{cases}
            \langle T \delta_{s^*y}, \delta_y \rangle & \text{if} \;\; y \lrel s^*y \;\; \text{and} \;\; s = t_{s^*y,y}, \\
            0 & \text{otherwise,}
          \end{cases}
        , $$
     and it is straightforward to check that $T = \sum_{s \in F} f_sv_s$ (a proof is also given in~\cite[Proof of Theorem 3.25]{LM21}). 

   We now turn to the proof that $\mathcal{R}_S \cong \ell^\infty(S) \rtimes_r S$, regardless of whether $S$ is a monoid. In order to prove this, we shall show that $\mathcal{R}_S$ satisfies the defining feature of $\ell^\infty(S) \rtimes_r S$, namely it is the (necessarily unique) C*-algebra densely containing $\ell^\infty(S) \rtimes_{alg} S$ and such that the conditional expectation $P$ given in Eq.~\ref{eq:cond-exp} induces a faithful conditional expectation on $\mathcal{R}_S$.
   
   We first begin by showing that the image of $P$ is contained in $\ell^\infty(S)$. Indeed, given $s \in S$ let $p_s$ be the orthogonal projection onto $\ell^2(\cup_{e \in s^{\downarrow} \cap E} eS)$, which is a closed subspace of $\ell^2(S)$, and $p_s \in \ell^\infty(S)$. It hence suffices to prove that $1_{s,1} = \iota(p_s)$, where $\iota \colon \ell^\infty(S) \hookrightarrow \ell^\infty(S)^{**}$ is the canonical inclusion. But this is readily done. Indeed, on one hand, $1_{s,1} \leq \iota(p_s)$, since $p_s x = x$ and $x p_s = x$ for all $x \in I_e$ and idempotent $e \leq s$. Likewise, note that $1_{s,1} \geq \iota(p_s)$. For this, fix some idempotent $e \leq s$ and observe that $1_{s,1} \geq \iota(p_e) = \iota(v_e)$, since $I_e \subseteq I_{s,1} \subseteq I_{s,1}^{**}$. Thus $\iota(p_s) = \sup_{e \leq s^\downarrow \cap E} \iota(p_e) \leq 1_{s,1}$, as desired. This means we may view $P$ in Eq.~(\ref{eq:cond-exp}) simply as the map
   $$P \colon \oplus_{s \in S}^{alg} \ell^\infty\left(ss^*S\right) v_s \to \ell^\infty\left(S\right), \;\; (f_s v_s)_{s \in S} \mapsto \sum_{s\in S} f_s p_s.$$
   Next, recall that the usual conditional expectation $E \colon \mathcal{B}(\ell^2(S)) \to \ell^\infty(S)$ is defined as $E(t)(x) = \langle t\delta_x, \delta_x\rangle$ for all $x \in S$. Moreover, it is not hard to show that $E$ is \textit{faithful}, meaning that $t = 0$ whenever $E(t^*t) = 0$.
   In this context, consider the map
   $$\psi \colon \oplus_{s \in S}^{alg} \ell^\infty\left(ss^*S\right) v_s \to \mathcal{B}(\ell^2(S)), \;\; (f_sv_s)_{s \in S} \mapsto \sum_{s \in S} f_s v_s.$$
   We claim that $P=E \circ \psi$. By linearity, it suffices to prove that $P(f_s v_s) = E(\psi(f_s v_s))$ for all $s \in S$ and $f_s \in \ell^\infty(ss^*S)$. In order to do this, note that $\psi(f_s v_s) = f_s v_s$ (as an element of $\mathcal{B}(\ell^2(S))$, not as a formal monomial). Then, for all $x \in S$ we have that
   $$ P\left(f_s v_s\right)\left(x\right) = \left(f_s p_s\right)\left(x\right) =
   \left\{
    \begin{array}{rl}
      f_s\left(x\right) & \text{if} \; x \in \cup_{e \in s^\downarrow \cap E} eS, \\
      0 & \text{otherwise.}
    \end{array}
   \right.
  $$
  Likewise,
  $$
    E\left(\psi\left(f_s v_s\right)\right)\left(x\right) = E\left(f_s v_s\right)\left(x\right) = \langle f_s v_s \delta_x, \delta_x \rangle =
     \left\{
      \begin{array}{rl}
        f_s\left(x\right) & \text{if} \; x = s^*s x = sx, \\
        0 & \text{otherwise.}
      \end{array}
     \right.
  $$
  By simply noting that $x \in \cup_{e \in s^\downarrow \cap E} eS$ if and only if $x = s^*s x = sx$, the proof that $P = E \circ \psi$ is done.
   Notice that the image of $\psi$ consists exactly of the set $\operatorname{span}\left(\ell^\infty\left(S\right) \cdot \left\{v_s \mid s \in S\right\}\right)$ of finite propagation operators, since $\bigcup_{s \in S} ss^\ast S=S$. All that remains to be shown is $\mathcal N_P=\ker\psi$.  This would indeed imply that $\operatorname{Im}(\psi)=\ell^\infty(S) \rtimes_{alg} S \cong  \oplus_{s \in S}^{alg} \ell^\infty\left(ss^*S\right)/{\mathcal N_P}$, which $\mathcal R_S$ densely contains, and the conditional expectation induced by $P$ on $\mathcal R_S$ will coincide with $E$, which is faithful on all of $\mathcal{B}(\ell^2(S))$.
   
   In order to prove that $\ker \psi = \mathcal{N}_P$ we first must show that $\psi$ is a homomorphism. For this, given $f_s v_s$ and $f_t v_t$ we let
   $$
    f_s v_s f_t v_t := \left[x \mapsto f_s\left(stx\right) f_t\left(tx\right)\right] v_{st} \;\; \text{and} \;\; \left(f_sv_s\right)^* := \left[x \mapsto \overline{f_s\left(s^*x\right)} \right] v_{s^*}.
   $$
   The above operations, extended by linearity, equip $\oplus_{s \in S}^{alg} \ell^\infty(ss^*S) v_s$ with an $*$-algebra structure. Moreover, note that $\psi$ is then a $*$-homomorphism. Given any $x$ such that $\psi(x) = 0$, it follows from $P = E \circ \psi$, that
   $$
    P\left(x^*x\right) = E\left(\psi\left(x^*x\right)\right) = E\left(\psi\left(x^*\right) \psi\left(x\right)\right) = 0,
   $$
   and hence $\ker \psi \subseteq \mathcal{N}_P$. For the reverse inclusion, let $x$ be such that $P(x^*x) = 0$. Then, by the computation above and the fact that $E$ is faithful, we have that $\psi(x) = 0$, as desired.
\end{proof}

In view of Theorems~\ref{thm:metric} and~\ref{thm:roealg:unique}, we shall henceforth fix a proper and right subinvariant metric $d$ on $S$, and call $C_u^*(S, d)$ the \textit{uniform Roe algebra} of $S$, as it (essentially) does not depend on the choice of such a metric.

\begin{remark}
  In order to answer a question of the referee, the isomorphisms given in Theorem~\ref{thm:roealg:unique} do fix the copies of the natural abelian subalgebra $\ell^\infty(S)$ in all of $\ell^\infty(S) \rtimes_r S, \mathcal{R}_S$ and $C_u^*(S, d)$ (at least when $S$ is a monoid). Indeed, for instance, $\psi$ in the proof of Theorem~\ref{thm:roealg:unique} does send $\ell^\infty(S)$ into $\mathcal{B}(\ell^2(S))$ in the usual way. 
\end{remark}

  \section{Local finiteness properties, geometry and quasi-diagonality} \label{sec:asymdim-lf}
  This final section of the text has the goal of characterizing those inverse semigroups that have asymptotic dimension $0$, both algebraically and by properties of their uniform Roe algebra. Incidentally, the same methods actually allow us to characterize those inverse semigroups whose $\lrel$-classes are \textit{sparse}. 

  \subsection{Local finiteness, asymptotic dimension \texorpdfstring{$0$}{0} and strong quasi-diagonality} \label{subsec:asymdim-lf}
  Recall we say an inverse semigroup $S$ is \textit{locally finite} if every finitely generated inverse subsemigroup of $S$ is finite. Observe that when $S$ happens to have an identity, it is locally finite as an inverse monoid if and only if it is locally finite as an inverse semigroup, that is, its finitely generated inverse submonoids are finite if and only if its finitely generated inverse subsemigroups are finite. We therefore do not differentiate between these two notions.

  Asymptotic dimension was introduced by Gromov as an analogue of topological dimension for metric spaces. It is a coarse invariant of the space. As for the purposes of this paper we are only interested in asymptotic dimension $0$, this is the only definition that we shall give (see~\cite{NY12} for the general definition).
  \begin{definition}\label{def:asdim0}
    Let $(X, d)$ be a metric space of bounded geometry.
    \begin{enumerate}
      \item A \textit{cover} $\mathcal{U}$ of $X$ is a family of subsets $U \subset X$ such that $\cup_{U \in \mathcal{U}} U = X$.
      \item The cover $\mathcal{U}$ is \textit{uniformly bounded} if the diameters of the sets $U \in \mathcal{U}$ are uniformly bounded, that is, $\sup_{U \in \mathcal{U}} \diam(U) < \infty$.
      \item We say $(X, d)$ is of \textit{asymptotic dimension $0$} if for all $r \in \mathbb R^+$ the space $X$ has a uniformly bounded cover $\mathcal{U}$ such that $d(U,V)>r$ for any $U, V \in \mathcal{U}$ with $U \neq V$.
    \end{enumerate}
  \end{definition}
  It is well known that a group is locally finite if and only if it has asymptotic dimension $0$ (see~\cite[Theorem~2]{Smith}). We shall prove the analogous result for inverse semigroups.

    Below we give a known alternative definition for having asymptotic dimension $0$. Recall that, given a metric space $(X,d)$, we may define an equivalence $\sim_r$ on $X$ by setting $x \sim_r y$ if there exists a sequence of points $z_1,\ldots, z_n$ with $x = z_1$, $y = z_n$ and $d(z_i, z_{i+1}) \leq r$. The $\sim_r$-classes are called \emph{$r$-components}. 
  \begin{lemma} \label{lemma:asdim0r}
  	The following are equivalent for any metric space $(X,d)$ with bounded geometry:
  	\begin{enumerate}
  		\item \label{item:asdim0r:asdim} $X$ has asymptotic dimension $0$;
  		\item \label{item:asdim0r:bounded} for any $r \in \mathbb R^+$, the $r$-components of $X$ have uniformly bounded diameter;
  		\item \label{item:asdim0r:sizebounded} for any $r \in \mathbb R^+$, the $r$-components of $X$ have uniformly bounded size.
  	\end{enumerate}
  \end{lemma}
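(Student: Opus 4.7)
The plan is to show the three-cycle $(1) \Rightarrow (2) \Rightarrow (3) \Rightarrow (1)$, where the equivalence $(2) \Leftrightarrow (3)$ is essentially bookkeeping using bounded geometry together with the trivial observation that $r$-connectedness bounds distance by the size of an $r$-chain.

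For $(1) \Rightarrow (2)$, I will fix $r$ and take a uniformly bounded cover $\mathcal U$ of $X$ with $d(U,V) > r$ for all distinct $U, V \in \mathcal U$. The key observation is that if $C$ is an $r$-component and $x, y \in C$, then there is an $r$-chain $z_1 = x, \dots, z_n = y$; since consecutive $z_i$ lie at distance at most $r$, they must belong to the same $U \in \mathcal U$, and so $C$ lies entirely in a single element of $\mathcal U$. Hence $\diam(C) \leq \sup_{U \in \mathcal U} \diam(U) < \infty$, uniformly in $C$.

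For $(2) \Rightarrow (3)$, note that an $r$-component of diameter at most $D$ is contained in a ball $B_D(x)$ for any of its points $x$, and $\sup_{x \in X} |B_D(x)| < \infty$ by bounded geometry.

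For $(3) \Rightarrow (1)$, I will simply use the $r$-components themselves as the cover. By definition of the equivalence $\sim_r$, distinct $r$-components are at distance strictly greater than $r$ from each other. It remains to show their diameters are uniformly bounded; for this, suppose every $r$-component has size at most $N$. Given two points $x, y$ in an $r$-component $C$, choose a shortest $r$-chain from $x$ to $y$; such a chain cannot repeat points, so it has length at most $|C| \leq N$, and hence $d(x,y) \leq (N-1)r$, giving $\diam(C) \leq (N-1)r$ uniformly.

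There is no real obstacle here; the only place where anything beyond definitions is used is $(2) \Rightarrow (3)$, which invokes bounded geometry, and the trivial length-vs-size estimate in $(3) \Rightarrow (1)$. The whole proof is essentially a repackaging of the definition of $\sim_r$ combined with the fact that a uniformly bounded cover separated by $r$ is precisely a family refining the $r$-component partition.
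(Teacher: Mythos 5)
Your proposal is correct and uses essentially the same ingredients as the paper's proof, which establishes the cycle (1)~$\Rightarrow$~(3)~$\Rightarrow$~(2)~$\Rightarrow$~(1); you run the same three observations (a cover separated by $r$ traps each $r$-component inside a single cover element, a size bound plus a shortest-chain argument bounds diameters, and bounded geometry converts a diameter bound back into a size bound) around the cycle in the reverse direction. This is a reorganization rather than a different method, and there are no gaps --- in particular, the finiteness of the $r$-components in your step (3)~$\Rightarrow$~(1) is what makes the separation of the cover by $r$-components strictly greater than $r$, exactly as in the paper.
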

  \begin{proof}
  	We will prove that~(\ref{item:asdim0r:asdim})~$\Rightarrow$~(\ref{item:asdim0r:sizebounded})~$\Rightarrow$~(\ref{item:asdim0r:bounded})~$\Rightarrow$~(\ref{item:asdim0r:asdim}). For~(\ref{item:asdim0r:asdim})~$\Rightarrow$~(\ref{item:asdim0r:sizebounded}), fix $r \in \mathbb R^+$ and let $\mathcal{U}$ be a covering witnessing the asymptotic dimension of $X$, that is, there is some $k \geq 0$ such that $\diam(U) \leq k$ and $d(U,V)>r$ for any $U \neq V \in \mathcal U$. In particular if $x \in U$ then for any $z \in U$ with $d(x,z) \leq r$ we have $z \in U$ as well. Hence, by an inductive argument, the $r$-component of $x$ is thus contained in $U \subseteq B_k(x)$, and so its cardinality is bounded by $|B_k(x)|$. Bounded geometry then gives a uniform bound on the size of the $r$-components.
  	
  	For~(\ref{item:asdim0r:sizebounded})~$\Rightarrow$~(\ref{item:asdim0r:bounded}), assume any $r$-component $C$ has size at most $N$. Let $x, y \in C$, and consider a sequence $z_1, \ldots, z_n$ with $x=z_1$, $y=z_n$ and $d(z_i, z_{i+1}) \leq r$. We can without loss of generality assume that the points in the sequence are pairwise distinct, and as they are all contained in $C$, we have $n \leq N$. Then $d(x,y) \leq rN$ by the triangle inequality, so $\diam(C) \leq rN$.
  	
  	Finally, for~(\ref{item:asdim0r:bounded})~$\Rightarrow$~(\ref{item:asdim0r:asdim}), simply note that the set of $r$-components is a cover with the required properties.
  \end{proof}

  The statements of the following lemma are immediate from the definition and the fact that asymptotic dimension is a coarse invariant, along with the fact that proper and right subinvariant metrics in inverse semigroups are unique up to bijective coarse equivalence (see Theorem~\ref{thm:metric}).
  \begin{lemma} \label{lemma:asymdim-zero:unital}
    Let $S$ be an inverse semigroup, and let $S^1$ be the monoid obtained by adjoining an identity to $S$. Let $d, d^1$ be proper and right subinvariant metrics on $S$ and $S^1$ respectively. Then the following hold:
    \begin{enumerate}
      \item $S$ is locally finite if and only if $S^1$ is locally finite.
      \item $(S, d)$ has asymptotic dimension $0$ if and only if $(S^1, d^1)$ has asymptotic dimension $0$.
    \end{enumerate}
  \end{lemma}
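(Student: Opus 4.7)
The plan is to exploit the observation (already recorded in Lemma~\ref{lemma:unital-technicality}) that the adjoined identity $1$ forms a trivial $\drel$-class in $S^1$, and hence a trivial $\lrel$-class as well. Consequently, $\{1\}$ is at infinite $d^1$-distance from every element of $S$, so metrically $S^1$ is just $S$ with an isolated point added.

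For part~(1), I would argue directly from the definitions. Since $S$ is an inverse subsemigroup of $S^1$, any finitely generated inverse subsemigroup of $S$ is also a finitely generated inverse subsemigroup of $S^1$, so local finiteness of $S^1$ immediately implies that of $S$. Conversely, if $T \subseteq S^1$ is generated by a finite set $F$, then $T \setminus \{1\}$ is contained in the inverse subsemigroup of $S$ generated by $F \setminus \{1\}$, which is finite by assumption; hence $T$ is finite too.

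For part~(2), I would first verify that the restriction $d^1|_S$ is itself a proper right subinvariant metric on $S$ whose components are the $\lrel$-classes of $S$. Right subinvariance is inherited from $d^1$; for properness, any finite $F \Subset S^1$ witnessing $r$-properness of $d^1$ can be replaced by $F \setminus \{1\} \Subset S$, since $1 x = x$ contributes nothing to the condition $y \in Fx$ for $y \neq x$; and the components agree because $\lrel$-relatedness within $S$ is unchanged when passing to $S^1$. By the uniqueness part of Theorem~\ref{thm:metric}, the identity $(S, d) \to (S, d^1|_S)$ is then a bijective coarse equivalence, so both spaces have the same asymptotic dimension.

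Finally, since $\{1\}$ sits as an isolated component of $(S^1, d^1)$, for any $r \geq 0$ the $r$-components of $(S^1, d^1)$ are precisely those of $(S, d^1|_S)$ together with the singleton $\{1\}$. By Lemma~\ref{lemma:asdim0r}, having asymptotic dimension $0$ is equivalent to uniform boundedness of $r$-components for every $r$, so $(S^1, d^1)$ has asymptotic dimension $0$ if and only if $(S, d^1|_S)$ does, which by the previous paragraph is equivalent to $(S, d)$ having asymptotic dimension $0$. The only point requiring any care is confirming that $d^1|_S$ satisfies all the conditions in Definition~\ref{def:metric} so that Theorem~\ref{thm:metric} applies, but this reduces to the routine checks outlined above.
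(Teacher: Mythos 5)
Your proof is correct and follows essentially the same route as the paper, which simply declares the lemma immediate from the definitions, the coarse invariance of asymptotic dimension, and the uniqueness of proper right subinvariant metrics up to bijective coarse equivalence (Theorem~\ref{thm:metric}). Your write-up just fills in the routine verifications that the paper leaves implicit, namely that $d^1|_S$ is again proper and right subinvariant with the correct components, and that $\{1\}$ sits as an isolated component so that $r$-components (and hence asymptotic dimension $0$, via Lemma~\ref{lemma:asdim0r}) are unaffected.
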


  A related, more general concept is that of \textit{sparse} metric spaces (see, e.g.,~\cite{BFV20}). A non-extended metric space $(X,d)$ of bounded geometry is \emph{sparse} if $X$ is the disjoint union of nonempty finite subsets $\{X_n\}_{n \in \mathbb{N}}$ such that $d(X_n, X_m) \to \infty$ as $n+m \to \infty$.
In particular sparse non-extended metric spaces are countable. As we are interested in quasi-countable inverse semigroups, we consider the following definition for (extended) metric spaces.  
  
  \begin{definition} \label{def:sparse-extended}
    Let $(X, d)$ be a metric space of bounded geometry. We say $X$ is \textit{sparse} if $(C, d|_C)$ is a sparse metric space for every connected component $C \subset X$.
  \end{definition}
   The following lemma, which is analogous to Lemma~\ref{lemma:asdim0r}, details the relationship between asymptotic dimension $0$ and sparseness.
  \begin{lemma} \label{lemma:sparser}
    The following are equivalent for any extended metric space $(X,d)$ with bounded geometry:
    \begin{enumerate}
      \item \label{item:sparser:sparse} $X$ is sparse;
      \item \label{item:sparser:diameter} for any $r \in \mathbb R^+$, the $r$-components of $X$ have finite diameter;
      \item \label{item:sparser:bounded} for any $r \in \mathbb R^+$, the $r$-components of $X$ are finite.
    \end{enumerate}
    In particular, any metric space with asymptotic dimension $0$ is sparse.
  \end{lemma}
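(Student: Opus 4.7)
The plan is to prove the equivalences via $(\ref{item:sparser:diameter}) \Leftrightarrow (\ref{item:sparser:bounded})$, $(\ref{item:sparser:sparse}) \Rightarrow (\ref{item:sparser:bounded})$, and $(\ref{item:sparser:bounded}) \Rightarrow (\ref{item:sparser:sparse})$, and then deduce the final sentence directly from Lemma~\ref{lemma:asdim0r}. Since sparseness is defined componentwise in Definition~\ref{def:sparse-extended}, I will work inside a single connected component of $X$ throughout. The equivalence of~(\ref{item:sparser:diameter}) and~(\ref{item:sparser:bounded}) is immediate from bounded geometry: an $r$-component with finite diameter sits inside a ball of finite radius around any of its points, so it is finite; conversely, a finite $r$-component automatically has finite diameter because all of its internal distances are finite.

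For~(\ref{item:sparser:sparse})~$\Rightarrow$~(\ref{item:sparser:bounded}), I would start from a sparse decomposition $C = \bigsqcup_n X_n$ of a connected component. Given $r > 0$, the convergence $d(X_n, X_m) \to \infty$ forces only finitely many pairs $(n,m)$ with $n \neq m$ to satisfy $d(X_n, X_m) \leq r$. The idea is to form the graph on the index set whose edges are exactly those pairs; this graph has finitely many edges, so each of its connected components is finite. Any $r$-component $A$ of $X$ will be contained in the union of the $X_n$'s coming from a single graph component, because an $r$-close jump in $X$ can only pass between distinct $X_n$ and $X_m$ along an edge. Since each $X_n$ is finite and only finitely many are involved, $A$ must be finite.

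For~(\ref{item:sparser:bounded})~$\Rightarrow$~(\ref{item:sparser:sparse}), the strategy is to exhaust a connected component $C$ by the nested sequence of $n$-components $Y_n$ around a fixed basepoint $x_0 \in C$, and to define $X_n := Y_n \setminus Y_{n-1}$ (with $Y_0 := \emptyset$). The $Y_n$ are finite by~(\ref{item:sparser:bounded}) and exhaust $C$ because every point of $C$ is at finite distance from $x_0$. After discarding empty strata and re-indexing by $\mathbb{N}$, the remaining task is the divergence estimate. The key observation I plan to exploit is: if $n < m$, $x \in X_n$ and $y \in X_m$, then $x \in Y_{m-1}$ while $y \notin Y_{m-1}$, hence $d(x,y) > m-1$---otherwise $y$ would be $(m-1)$-connected to $x_0$ through $x$. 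Thus $d(X_n, X_m) > \max(n,m) - 1$, which tends to infinity as $n+m \to \infty$. The last sentence of the lemma follows immediately: by Lemma~\ref{lemma:asdim0r}, asymptotic dimension $0$ makes every $r$-component uniformly bounded, in particular finite, which is~(\ref{item:sparser:bounded}).

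The main obstacle I expect is the implication $(\ref{item:sparser:bounded}) \Rightarrow (\ref{item:sparser:sparse})$, where the decomposition must be constructed explicitly and the divergence estimate has to be extracted with care. Using nested $n$-components around a single basepoint is the natural engine, but one has to manage the possibly empty strata $X_n$ and the trivial case in which $C$ is itself finite (where one simply takes the single term $X_1 = C$). The other implications are short bookkeeping once the right viewpoint is fixed.
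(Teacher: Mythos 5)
Your proposal is correct and follows essentially the same route as the paper: the key direction (\ref{item:sparser:bounded})~$\Rightarrow$~(\ref{item:sparser:sparse}) uses exactly the paper's construction (nested $n$-components $Y_n$ of a basepoint, strata $X_n = Y_n \setminus Y_{n-1}$, and the estimate $d(X_n,X_m) > \max(n,m)-1$), and your treatment of empty strata and finite components is, if anything, more careful than the paper's. Your graph-on-indices argument for (\ref{item:sparser:sparse})~$\Rightarrow$~(\ref{item:sparser:bounded}) is a cosmetic variant of the paper's cut-at-index-$k$ argument (both rest on the fact that only finitely many pairs of pieces are $r$-close), so it does not constitute a genuinely different approach.
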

  \begin{proof}
    The fact that~(\ref{item:sparser:diameter}) and~(\ref{item:sparser:bounded}) are equivalent follows routinely from the fact that $X$ is of bounded geometry. In order to see~(\ref{item:sparser:bounded})~$\Rightarrow$~(\ref{item:sparser:sparse}), let $C\subset X$ be a connected component of $X$, and fix a point $p \in C$. We shall define pairwise disjoint sets $X_i$ such that
    $$C=\bigcup\limits_{n=1}^\infty X_n \;\; \text{and} \;\; d(X_n, X_m) > \max\{n,m\} - 1$$
    when $n \neq m$. Let $X_1$ be the $1$-component of $p$. For $n \geq 2$, define $X_n$ as the $n$-component of $p$ minus the $(n-1)$-component of $p$. By (\ref{item:sparser:bounded}), these are all finite sets. It is also clear from the definition that $C$ is the disjoint union of the sets $\{X_n\}_{n \in \mathbb N}$. Now if $n > m$ then 
    $$d(X_n, X_m) \geq  d(X_n, X_1 \cup \ldots \cup X_{n-1}) \geq d(X \setminus \{X_1 \cup \ldots \cup X_{n-1}\}, X_1 \cup \ldots \cup X_{n-1})>n-1,$$ because
    $X_1 \cup \ldots \cup X_{n-1}$ is exactly the $(n-1)$-component of $p$. Thus $d(X_n, X_m) \geq \max\{n,m\} - 1$ whenever $n \neq m$.

    In order to prove that~(\ref{item:sparser:sparse}) implies~(\ref{item:sparser:bounded}), given $r \in \mathbb R^+$, each $r$-component of $X$ is contained in a disjoint union of nonempty finite subsets $\{X_n\}_{n\in\mathbb{N}}$ such that $d(X_n,X_m)\rightarrow\infty$ as $n+m\rightarrow\infty$. Let $k \in \mathbb N$ be large enough so that if $n+m \geq k$, then $d(X_n,X_m) > r$. We claim that then each $r$-component is contained in one of the finite sets
    $$X_1 \cup \cdots \cup X_{k-1}, \; X_k, \; X_{k+1}, \; \ldots.$$
    Indeed, by the assumption, the distance between any pair of the above sets is bigger than $r$, meaning no two $r$-related elements can fall into two distinct sets.
  \end{proof}

  It is immediate from the above lemma that sparseness, too, is a coarse invariant property. Note that even though bounded geometry metric spaces with asymptotic dimension $0$ are all sparse, the converse is in general false. However, within the realm of groups, the two notions coincide, as is well known and we will see below. We will show that for inverse semigroups having asymptotic dimension $0$ and being sparse are not equivalent.

   It turns out that the corresponding algebraic property of inverse semigroups is what we term \textit{local $\lrel$-finiteness}. We say an inverse semigroup $S$ is \textit{$\mathcal L$-finite} if all its $\mathcal L$-classes are finite and, as usual, we say it is \textit{locally $\lrel$-finite} if all of its finitely generated inverse subsemigroups are $\lrel$-finite. Again, in the case when $S$ is a monoid, this is equivalent to the finitely generated submonoids of $S$ being $\lrel$-finite. The following characterization of $\lrel$-finite inverse semigroups is useful to know.
  \begin{lemma} \label{lemma:l-finite} The following are equivalent for any inverse semigroup $S$:
    \begin{enumerate}
    \item \label{lemma:l-finite:l} $S$ is $\mathcal L$-finite.
    \item \label{lemma:l-finite:r} $S$ only has finite $\mathcal R$-classes.
    \item \label{lemma:l-finite:d} $S$ only has finite $\mathcal D$-classes.
    \end{enumerate}
  \end{lemma}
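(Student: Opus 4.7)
The plan is to prove the chain of implications (\ref{lemma:l-finite:d})~$\Rightarrow$~(\ref{lemma:l-finite:l}) and (\ref{lemma:l-finite:r}), then (\ref{lemma:l-finite:l})~$\Leftrightarrow$~(\ref{lemma:l-finite:r}) via a direct bijection between $\mathcal{L}$- and $\mathcal{R}$-classes, and finally to bootstrap from (\ref{lemma:l-finite:l}) and (\ref{lemma:l-finite:r}) back to (\ref{lemma:l-finite:d}) by decomposing each $\mathcal{D}$-class into finitely many $\mathcal{L}$-classes. Both implications out of (\ref{lemma:l-finite:d}) are immediate, since $\mathcal{L}$- and $\mathcal{R}$-classes are contained in $\mathcal{D}$-classes.

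For (\ref{lemma:l-finite:l})~$\Leftrightarrow$~(\ref{lemma:l-finite:r}), I would show $|L_s| = |R_s|$ for each $s \in S$ by composing two algebraic bijections. First, the map $L_s \to L_{s^*}$, $t \mapsto ts^*$, is a bijection with inverse $u \mapsto us$; this is the purely algebraic content of Lemma~\ref{lemma:metric-right-inv}~(\ref{lemma:metric-right-inv:iso}), easily checked directly from $t^*t = s^*s$ for $t \in L_s$, which gives $ts^*s = t$ (and symmetrically $uss^* = u$ for $u \in L_{s^*}$). Second, the involution $u \mapsto u^*$ sends $L_{s^*} = \{u \in S : u^*u = ss^*\}$ bijectively onto $R_s = \{v \in S : vv^* = ss^*\}$. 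Composing yields a bijection $L_s \to R_s$, so (\ref{lemma:l-finite:l}) and (\ref{lemma:l-finite:r}) are equivalent.

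For the remaining implication, fix $s \in S$ and decompose $D_s$ as a disjoint union of $\mathcal{L}$-classes. Each such class is finite by~(\ref{lemma:l-finite:l}), so it suffices to bound the number of $\mathcal{L}$-classes appearing in $D_s$ using~(\ref{lemma:l-finite:r}). Since every $\mathcal{L}$-class in an inverse semigroup contains a unique idempotent (namely $t^*t$ for any $t$ in the class), this number equals $|E \cap D_s|$. Setting $e = s^*s$, any idempotent $f \in E \cap D_s$ satisfies $f \drel e$, so by definition there is $u \in S$ with $f \lrel u \rrel e$; such a $u$ lies in $R_e$ and satisfies $u^*u = f$. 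This produces a surjection $R_e \to E \cap D_s$, $u \mapsto u^*u$, so $|E \cap D_s| \leq |R_e|$, which is finite by~(\ref{lemma:l-finite:r}). Hence $D_s$ is a finite union of finite sets, completing the proof. The whole argument consists of routine manipulations of Green's relations, and there is no real obstacle; the only care required is in tracking which idempotent characterizes which class, which is the kind of bookkeeping already handled throughout Lemma~\ref{lemma:metric-right-inv}.
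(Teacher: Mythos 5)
Your proof is correct and follows essentially the same route as the paper: the equivalence of~(\ref{lemma:l-finite:l}) and~(\ref{lemma:l-finite:r}) via the involution $t \mapsto t^*$ (the paper bijects $L_s$ with $R_{s^*}$ directly, while you compose $t \mapsto ts^*$ with the involution to land in $R_s$), and finiteness of $\drel$-classes by writing $D_s$ as a union of finitely many finite $\lrel$-classes whose number is bounded by the cardinality of an $\rrel$-class. Your bookkeeping via unique idempotents and the surjection $R_e \to E \cap D_s$, $u \mapsto u^*u$, is just a slightly more explicit version of the paper's one-line decomposition $D_s = \bigcup_{t \in R_s} L_t$, which gives $|D_s| \leq |L_s|^2$.
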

  \begin{proof}
    The equivalence of~(\ref{lemma:l-finite:l}) and~(\ref{lemma:l-finite:r}) follows from the fact that $s \lrel t$ if and only if $s^* \rrel t^*$, so the $\lrel$-class $L_s$ is in bijection with the $\rrel$-class $R_{s^*}$.

    Since $\drel$-classes are unions of $\lrel$-classes, the~(\ref{lemma:l-finite:d}) $\Rightarrow$~(\ref{lemma:l-finite:l}) implication is trivial. For the converse, let $s \in S$ and note that $D_s=\cup_{t \in R_s} L_t = \cup_{t^* \in L_{s^*}}L_t$, whence $|D_s|\leq |L_s|^2$.  
  \end{proof}

  Observe that for groups, the notions of (local) finiteness and (local) $\lrel$-finiteness coincide, as $\lrel$ is the universal relation. However, for inverse semigroups these are different classes. For instance, finitely generated free inverse semigroups are $\mathcal L$-finite (but not finite), and free inverse semigroups are locally $\lrel$-finite (but not locally finite). The analogue of Lemma~\ref{lemma:asymdim-zero:unital} remains true in this setting, the proof is again immediate from the definitions.
  \begin{lemma} \label{lemma:box:unital}
    Let $S$ be an inverse semigroup, and let $S^1$ be the monoid obtained by adjoining an identity to $S$. Let $d, d^1$ be proper and right subinvariant metrics on $S$ and $S^1$ respectively. Then the following hold:
    \begin{enumerate}
      \item $S$ is locally $\lrel$-finite if and only if $S^1$ is $\lrel$-locally finite.
      \item $(S, d)$ is sparse if and only if $(S^1, d^1)$ is sparse.
    \end{enumerate}
  \end{lemma}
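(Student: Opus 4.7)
The plan is to observe that adjoining an identity to $S$ only introduces a single trivial $\lrel$-class $\{1\}$ which sits at infinite distance from every element of $S$ in $d^1$, just as in Lemma~\ref{lemma:unital-technicality}. Since both local $\lrel$-finiteness and sparseness are componentwise conditions, the presence of this isolated extra component has no effect. I would first note that by Theorem~\ref{thm:metric}, $d^1$ is unique up to bijective coarse equivalence on $S^1$, and that its restriction to $S$ is a proper right subinvariant metric on $S$, hence bijectively coarsely equivalent to $d$. Since sparseness is a coarse invariant, we may assume $d^1$ is precisely the extension of $d$ described in Lemma~\ref{lemma:unital-technicality}.

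For~(1), I would first observe that the $\lrel$-classes of $S^1$ are precisely those of $S$ together with the singleton $\{1\}$, since $1$ is never equal to a product of elements of $S$ and thus $1 \lrel s$ forces $s = 1$. Given a finitely generated inverse submonoid $T \leq S^1$ generated by $X \subseteq S^1$, the set $X_0 := X \setminus \{1\}$ generates a finitely generated inverse subsemigroup $T_0 \leq S$ with $T = T_0 \cup \{1\}$ (or $T = \{1\}$ if $X_0 = \emptyset$). The $\lrel$-classes of $T$ are then those of $T_0$ together with $\{1\}$, so $T$ is $\lrel$-finite if and only if $T_0$ is. Conversely, any finitely generated inverse subsemigroup $T_0 \leq S$ with generating set $X_0$ yields the finitely generated inverse submonoid $T_0 \cup \{1\} \leq S^1$ generated by $X_0$, reducing back to the previous case. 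Local $\lrel$-finiteness therefore transfers in both directions.

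For~(2), I would appeal to Lemma~\ref{lemma:sparser}: sparseness is equivalent to all $r$-components being finite for every $r \geq 0$. Because $d^1$ extends $d$ and assigns $d^1(s,1) = \infty$ for every $s \in S$, the $r$-components of $(S^1, d^1)$ are exactly the $r$-components of $(S, d)$ together with the singleton $r$-component $\{1\}$. Finiteness of all $r$-components therefore transfers in both directions, giving the desired equivalence.

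The main obstacle here is negligible; this lemma is essentially a bookkeeping check, as all the substantive content has already been established in Lemma~\ref{lemma:unital-technicality}, Lemma~\ref{lemma:sparser}, and Theorem~\ref{thm:metric}.
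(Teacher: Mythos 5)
Your proposal is correct and takes essentially the same route as the paper: the paper's own proof of this lemma is just the remark that it is ``immediate from the definitions,'' invoking (as you do) the uniqueness of the metric from Theorem~\ref{thm:metric}, the coarse invariance of sparseness, and the fact that $\{1\}$ forms an isolated $\lrel$-class and component. Your write-up merely supplies the routine details left implicit there --- the reduction of finitely generated inverse submonoids of $S^1$ to inverse subsemigroups of $S$, and the $r$-component characterization of sparseness from Lemma~\ref{lemma:sparser} --- and all of these details are accurate.
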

  
  The following lemma characterizes those finitely generated inverse semigroups which are sparse spaces and, respectively, of asymptotic dimension $0$.
  \begin{lemma} \label{lemma:finite-and-asymdim-zero}
    Let $T$ be a finitely generated inverse semigroup.
    \begin{enumerate}
      \item \label{lemma:finite-and-asymdim-zero:finit} If $T$ has asymptotic dimension $0$, then it is finite.
      \item \label{lemma:finite-and-asymdim-zero:l-fin} If $T$ is sparse then it is $\mathcal L$-finite.
    \end{enumerate}
  \end{lemma}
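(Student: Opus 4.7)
My plan rests on the observation that for a finitely generated inverse semigroup $T$ with its word metric $d$, the $r$-components of $(T,d)$ for every $r \geq 1$ coincide with the $\mathcal{L}$-classes. This is because within each $\mathcal{L}$-class the Sch\"utzenberger graph is strongly connected with edges of length one, so any two elements are joined by a chain of unit-length steps, while points in distinct $\mathcal{L}$-classes lie at infinite distance. Consequently, item (2) is immediate: Lemma~\ref{lemma:sparser} identifies sparsity with the finiteness of every 1-component, and by the observation above this means every $\mathcal{L}$-class is finite, i.e., $T$ is $\mathcal{L}$-finite.

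For item (1), Lemma~\ref{lemma:asdim0r} upgrades the conclusion to a uniform bound $N$ on the sizes of $r$-components, hence on the $\mathcal{L}$-classes, and Lemma~\ref{lemma:l-finite} then bounds the sizes of all $\mathcal{D}$-classes by $N^2$. It therefore suffices to show that $T$ has only finitely many $\mathcal{D}$-classes. My approach is to combine Proposition~\ref{prop:wordmetric-prop}(\ref{prop:wordmetric-prop:d})---which says Sch\"utzenberger graphs in the same $\mathcal{D}$-class are isomorphic as edge-labeled digraphs---with the elementary fact that edge-labeled digraphs on at most $N$ vertices with labels in the finite alphabet $X \cup X^*$ admit only finitely many isomorphism types. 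Appealing to Stephen's theorem~\cite{S90}, by which the isomorphism type of the Sch\"utzenberger graph is a complete invariant of the $\mathcal{D}$-class, I conclude that $T$ has only finitely many $\mathcal{D}$-classes, and hence $|T| < \infty$.

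The main obstacle is precisely this last appeal: justifying that two Sch\"utzenberger graphs which are isomorphic as edge-labeled digraphs must belong to the same $\mathcal{D}$-class. Proposition~\ref{prop:wordmetric-prop}(\ref{prop:wordmetric-prop:d}) supplies only one direction; the converse relies on the canonical nature of Stephen's construction, which builds the Sch\"utzenberger graph from any representative word in a uniform way. Making this rigorous---or, alternatively, replacing this step with a direct structural analysis that bounds the number of $\mathcal{D}$-classes in a finitely generated inverse semigroup with uniformly bounded $\mathcal{L}$-classes---is the technical heart of the proof of (1).
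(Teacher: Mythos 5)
Your proposal is correct and follows the paper's proof essentially step for step: part (2) is obtained from the identification of the $1$-components of the word metric with the $\mathcal{L}$-classes together with Lemma~\ref{lemma:sparser}, and part (1) is reduced, via Lemmas~\ref{lemma:asdim0r} and~\ref{lemma:l-finite}, to showing that $T$ has only finitely many $\drel$-classes, which follows by counting isomorphism types of edge-labeled digraphs on boundedly many vertices over the finite alphabet $X \cup X^*$. The step you single out as the technical heart is not a point of divergence: the paper uses exactly the same equivalence, asserting without further proof that $\sch{s}$ and $\sch{t}$ are isomorphic as edge-labeled digraphs if and only if $s \drel t$, as a fact recalled from Stephen's work \cite{S90}. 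For completeness, the converse direction you worry about is true and can be checked directly: a word $w$ labels a loop at a vertex $u$ of a Sch\"utzenberger graph precisely when the element $[w]$ it represents satisfies $[w] \geq uu^*$, so if $\phi \colon \sch{s} \rightarrow \sch{t}$ is a labeled-graph isomorphism and $v := \phi(s^*s)$, then comparing the loop languages at $s^*s$ and at $v$ (using words representing $s^*s$ and $vv^*$, which exist since $T$ is generated by $X$) yields $s^*s \geq vv^*$ and $vv^* \geq s^*s$, hence $s^*s = vv^*$; therefore $s \lrel s^*s \rrel v \lrel t$, giving $s \drel t$.
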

  \begin{proof}
    Since $T$ is finitely generated we obtain the unique right subinvariant proper metric $d$ by taking the word metric with respect to a finite generating set $A \Subset T$. Notice that in this metric, $d(x,y) < \infty$ if and only if they are in the same $1$-component, that is, the $1$-components are the $\mathcal L$-classes. This immediately implies~(\ref{lemma:finite-and-asymdim-zero:l-fin}) by Lemma \ref{lemma:sparser}. Likewise, by Lemma~\ref{lemma:asdim0r}, if $T$ has asymptotic dimension $0$ then its $\mathcal L$-classes are uniformly bounded in size by some constant $k$. Hence, by Lemma~\ref{lemma:l-finite}, in order to prove~(\ref{lemma:finite-and-asymdim-zero:finit}) all we have to do is show that $T$ admits only finitely many $\drel$-classes.

      Consider a Sch\"{u}tzenberger graph $\sch s$ for $s \in T$. It has at most $k$ vertices, and so given any label $a \in A$, there are at most $k$ edges labeled by $a$, since no two edges with the same label can have the same initial (or terminal) vertex. Since $A$ is finite we see that there are only finitely many such edge-labeled graphs up to isomorphism respecting the labels. Recall that $\sch{s}$ and $\sch{t}$ are isomorphic as edge-labeled digraphs if and only if $s \drel t$, which implies $T$ has only finitely many $\drel$-classes.
   \end{proof}

  The following lemma is routine to show, but will be useful when proving the main contributions of the section (see Theorems~\ref{thm:asymdim-zero} and~\ref{thm:boxspace}).
  \begin{lemma} \label{lemma:contractive-embedding}
    Given any uniformly bounded metric spaces $(X, d_X)$ and $(Y,d_Y)$ with $X \subseteq Y$, if for any  $x, z \in X$ we have $d_X(x,z) \geq d_Y(x,z)$, then 
    \begin{enumerate}
      \item \label{lemma:contractive-embedding:dim0} if $Y$ has asymptotic dimension $0$, then so does $X$;
      \item \label{lemma:contractive-embedding:sparse} if $Y$ is sparse, then so is $X$.
    \end{enumerate}
  \end{lemma}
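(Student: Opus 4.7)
The plan is to reduce everything to the characterizations of asymptotic dimension $0$ and sparseness in terms of $r$-components (Lemma~\ref{lemma:asdim0r} and Lemma~\ref{lemma:sparser}). Specifically, both properties admit a characterization in terms of the \emph{size} of $r$-components — having uniformly bounded cardinality in the asymptotic dimension $0$ case, and being finite in the sparse case. Since the hypothesis $d_X(x,z) \geq d_Y(x,z)$ goes in the ``wrong direction'' for diameters but the right direction for $r$-connectedness, the size-based characterizations are what will make the proof go through cleanly.

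The first step is to verify that $(X,d_X)$ inherits bounded geometry (and uniform discreteness) from $(Y,d_Y)$. Indeed, for any $x \in X$ one has $B_r^{d_X}(x) \subseteq B_r^{d_Y}(x) \cap X$ by the contractive hypothesis, so $\sup_{x \in X} |B_r^{d_X}(x)| \leq \sup_{y \in Y} |B_r^{d_Y}(y)| < \infty$, and similarly any uniform discreteness constant for $Y$ works for $X$. This ensures the characterizations of Lemmas~\ref{lemma:asdim0r} and~\ref{lemma:sparser} are applicable to $X$.

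The key step is the containment of $r$-components: for any $r > 0$ and $x, z \in X$, if $x \sim_r^X z$, then $x \sim_r^Y z$. This is immediate from the hypothesis, as any chain $x = z_1, \ldots, z_n = z$ in $X$ with $d_X(z_i, z_{i+1}) \leq r$ satisfies $d_Y(z_i, z_{i+1}) \leq d_X(z_i, z_{i+1}) \leq r$ and all $z_i$ lie in $X \subseteq Y$. Hence every $r$-component of $(X, d_X)$ is contained (as a set) in an $r$-component of $(Y, d_Y)$.

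From here both statements follow routinely. For~(\ref{lemma:contractive-embedding:dim0}), if $Y$ has asymptotic dimension $0$ then by Lemma~\ref{lemma:asdim0r}\,(\ref{item:asdim0r:sizebounded}) the $r$-components of $Y$ are uniformly bounded in cardinality, so the $r$-components of $X$ — being subsets of these — are too, giving asymptotic dimension $0$ for $X$. For~(\ref{lemma:contractive-embedding:sparse}), if $Y$ is sparse then by Lemma~\ref{lemma:sparser}\,(\ref{item:sparser:bounded}) its $r$-components are finite, hence so are those of $X$, whence $X$ is sparse by the same lemma. There is no real obstacle here; the only mild subtlety is resisting the temptation to compare diameters (which would go the wrong way under the contractive hypothesis) and instead invoking the cardinality formulations provided by Lemmas~\ref{lemma:asdim0r} and~\ref{lemma:sparser}.
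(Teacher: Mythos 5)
Your proof is correct and takes essentially the same approach as the paper's: both arguments hinge on showing that every $r$-component of $(X,d_X)$ is contained in an $r$-component of $(Y,d_Y)$, and then conclude by invoking Lemmas~\ref{lemma:asdim0r} and~\ref{lemma:sparser}. Your extra care --- checking that $X$ inherits bounded geometry and explicitly using the cardinality-based (rather than diameter-based) characterizations, which is indeed the only direction compatible with the hypothesis $d_X \geq d_Y$ --- simply makes explicit what the paper leaves implicit.
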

  \begin{proof}
    Suppose $x$ and $z$ are in the same $r$-component of $X$, that is, there exist $x_1, \ldots, x_n \in X$ such that $x=x_1, z=x_n$, and $d_X(x_i,x_{i+1})\leq r$. Then $d_Y(x_i,x_{i+1})\leq r$, so $x$ and $z$ are also in the same $r$-component in $Y$. This shows that the $r$-components of $X$ are contained in the $r$-components of $Y$, which implies both statements~(\ref{lemma:contractive-embedding:dim0}) and~(\ref{lemma:contractive-embedding:sparse}) by Lemmas~\ref{lemma:asdim0r} and~\ref{lemma:sparser} respectively.
  \end{proof}

  We obtain the following key lemma as an easy consequence.
  \begin{lemma} \label{lem:subsemigroup_inherits}
    Given any inverse semigroup $S$ and an inverse subsemigroup $T \subset S$, 
    \begin{enumerate}
      \item \label{lem:subsemigroup_inherits:asdim0} if $S$ has asymptotic dimension $0$, then so does $T$;
      \item \label{lem:subsemigroup_inherits:sparse} if $S$ is sparse, then so is $T$.
    \end{enumerate}
  \end{lemma}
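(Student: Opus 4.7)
The plan is to apply Lemma~\ref{lemma:contractive-embedding} by constructing a suitable proper right subinvariant metric $d_T$ on $T$ (which we implicitly assume quasi-countable, as is needed for the statement to be meaningful) that pointwise dominates the restriction of $d$ to $T$. Both asymptotic dimension $0$ and sparseness are characterized by the $r$-components having uniformly bounded size or being finite, respectively (Lemmas~\ref{lemma:asdim0r} and~\ref{lemma:sparser}), so they pass to subsets equipped with a larger metric.

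First, observe that $l_S|_T$ satisfies the length function axioms~(\ref{item:length-zero})--(\ref{item:length-product}) on $T$, since $E(T) = T \cap E(S)$. However, $l_S|_T$ need not be proper on $T$, because the finite upper bound of its $r$-cylinder in $S$ might lie outside of $T$. To remedy this, take any proper right subinvariant length function $l_T^0$ on $T$ (which exists by Proposition~\ref{prop:metric-existence}), and set
$$ l_T(t) := \max \left( l_S(t), l_T^0(t) \right). $$
It is routine to check that $l_T$ is a length function on $T$, and properness follows because the $r$-cylinder of $l_T$ is contained in the $r$-cylinder of $l_T^0$, which is finitely upper bounded in $T$.

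The corresponding metric $d_T$ on $T$ given by Proposition~\ref{prop:length-functions-and-metrics} then pointwise dominates $d|_T$: for $s, t \in T$ which are $\mathcal L$-related in $T$ (and hence in $S$), one has $d_T(s,t) = l_T(ts^*) \geq l_S(ts^*) = d(s,t)$, while for $s, t \in T$ not $\mathcal L$-related in $T$, $d_T(s,t) = \infty \geq d(s,t)$. Applying Lemma~\ref{lemma:contractive-embedding} now yields both statements for $(T, d_T)$, and hence for any proper right subinvariant metric on $T$, since these properties are coarse invariants and the metric on $T$ is unique up to bijective coarse equivalence by Theorem~\ref{thm:metric}. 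The only subtle point is that $l_S|_T$ is typically not proper on $T$, so the auxiliary $l_T^0$ is introduced precisely to enforce properness while preserving the inequality $l_T \geq l_S|_T$.
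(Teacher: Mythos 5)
Your proof is correct, and at the strategic level it matches the paper's: both arguments construct a proper, right subinvariant metric on $T$ that pointwise dominates the restriction of $d_S$, and then conclude via Lemma~\ref{lemma:contractive-embedding} together with coarse invariance and the uniqueness statement of Theorem~\ref{thm:metric}. Where you genuinely differ is in how that dominating metric is built. The paper chooses a generating set $A$ of $T$ and equips $T$ with the weighted word metric of Definition~\ref{def:weigthed-metric} with weights $w(a)=l_S(a)$, getting domination from subadditivity along words via Lemma~\ref{lemma:metric-right-inv}~(\ref{lemma:metric-right-inv:contract}); you stay entirely at the level of length functions, setting $l_T=\max(l_S|_T, l_T^0)$ for an auxiliary proper length function $l_T^0$ on $T$, so that domination holds by construction and properness is inherited from $l_T^0$. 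Your route buys a real simplification: Proposition~\ref{prop:weigthed-metric} requires $\mathbb{N}$-valued weights with $w^{-1}(n)$ finite for every $n$, hypotheses the paper's proof never verifies and which can fail for an arbitrary countable $A$ (infinitely many generators may share the same $l_S$-length, and $l_S$ need not be integer-valued), so the paper's construction needs a small repair (e.g., inflating the weights to $\max(\lceil l_S(a_i)\rceil, i)$), whereas your max trick sidesteps this at the modest cost of invoking Proposition~\ref{prop:metric-existence} once more; what the paper's approach buys in exchange is an explicit, hands-on word metric on $T$. Finally, both arguments share the same caveat, which you alone make explicit: $T$ must be assumed quasi-countable for it to carry any proper right subinvariant metric at all. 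This is automatic in the paper's applications of the lemma (there $T$ is finitely generated), but not in general --- for instance, if $E$ is the semilattice of finite subsets of an uncountable set under union (so $\emptyset$ is the top element), $G$ is a countable locally finite group, $S=G\times E$ and $T=G\times(E\setminus\{\emptyset\})$, then $S$ is quasi-countable and locally finite, yet $T$ is not quasi-countable, since writing $(g,\{x\})$ with $g\neq 1$ as a product of quasi-generators and idempotents forces a quasi-generator with second coordinate exactly $\{x\}$ for each of the uncountably many $x$. So the lemma should be read with quasi-countability of $T$ as a standing hypothesis, exactly as you flag.
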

  \begin{proof}
  Denote by $d_s$ and $l_s$ the metric and the respective length function on $S$.
    Choose a generating set $A$ for $T$, and consider the metric on $T$ arising as the weighted word metric choosing weights $w(a)=l_s(a)$ for $a \in A$ (see Proposition~\ref{prop:weigthed-metric}). Denote this metric by $d_T$. We claim that if $t_1, t_2 \in T$, then $d_T(t_1, t_2) \geq d_S(t_1,t_2)$. First notice that $t_1$ and $t_2$ are $\lrel$-related in both $S$ and $T$ if and only if $t_1^*t_1 = t_2^*t_2$, so the connected components are the same in both metrics. Assume $t_1 \lrel t_2$, and consider a geodesic path with respect to $d_T$ from $t_1$ to $t_2$ labeled by $a_n, \ldots, a_1$ with $a_i \in A \cup A^*$, that is, $t_2 = a_1 \ldots a_n t_1$ with 
$$d_T (t_1, t_2) = w(a_1) + \ldots + w(a_n)=l_S(a_1) + \ldots + l_S(a_n)\geq l_S(a_1 \ldots a_n) \geq d_S(t_1,t_n)$$ by Lemma \ref{lemma:metric-right-inv} (\ref{lemma:metric-right-inv:contract}). We can then apply Lemma~\ref{lemma:contractive-embedding} to $(T,d_T)$ and $(S,d_S)$, yielding both statements. 
  \end{proof}

  The following lemma provides a useful necessary condition for inverse semigroups not of asymptotic dimension $0$. In particular, it says that such semigroups must contain arbitrarily long $r_0$-paths, for some $r_0 \geq 0$. This was proved in~\cite[Lemma~2.4]{LW18} for non-extended metric spaces, and can be proved for extended metric spaces similarly.
  \begin{lemma} \label{lemma:asymdim-one}
    Let $S$ be an inverse semigroup not of asymptotic dimension $0$. Then there is some $r_0 \geq 0$ and sets $X_n = \{x_1^{(n)}, \dots, x_{m(n)}^{(n)}\}$ such that $m(n) \rightarrow \infty$ when $n \rightarrow \infty$ and the following assertions hold:
    \begin{enumerate}
      \item $d(x_i^{(n)}, x_{i+1}^{(n)}) \leq 2r_0$ and $d(x_1^{(n)}, x_i^{(n)}) \in [(i-1)r_0, ir_0)$ for every $n \in \mathbb{N}$ and $i = 1, \dots, m(n) - 1$.
      \item The sequence $\{\inf_{m \neq n} d(X_n, X_m)\}_{n \in \mathbb{N}}$ is positive and tends to $\infty$ when $n \rightarrow \infty$.  
    \end{enumerate}
    In particular, there are $\lrel$-classes $L_n \subset S$ such that $X_n \subset L_n$.
  \end{lemma}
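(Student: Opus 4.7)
The plan is to adapt, as the authors indicate, the argument of \cite[Lemma~2.4]{LW18} from the non-extended setting to ours. The failure of $(S,d)$ to have asymptotic dimension $0$, via the contrapositive of Lemma~\ref{lemma:asdim0r}, produces some $r_0 > 0$ for which the $r_0$-components of $S$ do not have uniformly bounded size (equivalently, by bounded geometry, uniformly bounded diameter). This $r_0$ is the one that appears in the statement.

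Since $\lrel$-classes are the connected components of $(S,d)$, every $r_0$-component is contained in a single $\lrel$-class, so each $X_n$ will automatically lie in some $\lrel$-class $L_n$, yielding the final sentence of the statement. For each $n$, I would select an $r_0$-component $C_n$ of diameter at least $n r_0$, a base point $x_1^{(n)} \in C_n$, and some $y_n \in C_n$ with $d(x_1^{(n)}, y_n) \geq n r_0$. Joining them by an $r_0$-path $z_0 = x_1^{(n)}, z_1, \ldots, z_k = y_n$, the goal is to extract a subsequence $0 = l_1 < l_2 < \cdots < l_{m(n)}$ so that $x_i^{(n)} := z_{l_i}$ satisfies $d(x_1^{(n)}, x_i^{(n)}) \in [(i-1) r_0, i r_0)$ and $d(x_i^{(n)}, x_{i+1}^{(n)}) \leq 2 r_0$. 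Such a subsequence is produced by a discrete intermediate value argument along the path, using that $l \mapsto d(x_1^{(n)}, z_l)$ changes by at most $r_0$ per step; choosing the $r_0$-path as a geodesic in the $r_0$-graph guarantees that consecutive selected points can always be taken within $2 r_0$ of each other, and since the diameter bound on $C_n$ grows in $n$, the resulting lengths satisfy $m(n) \to \infty$.

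To obtain property (2), I would distinguish two cases. If some $r_0$-component $C$ has infinite diameter, the entire family $(X_n)$ can be built inside $C$, with base points $x_1^{(n)}$ marching off to infinity and spaced so that each $X_n$ sits sufficiently far from $X_1 \cup \cdots \cup X_{n-1}$. Otherwise, $S$ contains infinitely many $r_0$-components whose diameters are unbounded in $n$; these are pairwise at distance strictly greater than $r_0$, and bounded geometry lets us inductively pick the $C_n$ so that $d(C_n, C_1 \cup \cdots \cup C_{n-1}) \geq n$, since only finitely many $r_0$-components can meet any bounded ball.

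The main obstacle is balancing the two conditions in (1) simultaneously: the growth condition $d(x_1^{(n)}, x_i^{(n)}) \in [(i-1)r_0, i r_0)$ forces the $x_i^{(n)}$'s to spread out linearly, while the closeness condition $d(x_i^{(n)}, x_{i+1}^{(n)}) \leq 2 r_0$ forces consecutive points to remain uniformly close. Reconciling them is the combinatorial core of \cite[Lemma~2.4]{LW18}, and no genuinely new difficulty appears in the extended-metric setting since the sub-path construction takes place entirely inside a single $r_0$-component (hence a single $\lrel$-class), on which the metric is non-extended.
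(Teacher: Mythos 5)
Your reduction to $r_0$-components via Lemma~\ref{lemma:asdim0r}, the observation that each $X_n$ automatically lies in a single $\lrel$-class, and the two-case argument for the separation condition (2) are all fine, and match what the paper intends (the paper itself gives no argument for this lemma beyond citing \cite[Lemma~2.4]{LW18} and asserting the extended case is similar). The genuine gap is in the core of condition (1): your claim that choosing the $r_0$-path to be a geodesic in the $r_0$-graph ``guarantees that consecutive selected points can always be taken within $2r_0$ of each other'' is false. A geodesic in the $r_0$-graph can stall inside an annulus around the base point. Concretely, take the shortest-path metric of the weighted graph with vertices $a,b,c_1,\dots,c_6,e_1,e_2,\dots$, edges of length $1$ given by $a$--$b$, $b$--$c_1$, $c_j$--$c_{j+1}$, $c_6$--$e_1$, $e_i$--$e_{i+1}$, plus edges $a$--$c_j$ of length $2$ for every $j$. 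This space is uniformly discrete with bounded geometry, and for $r_0=1$ the $r_0$-graph is the path $a$--$b$--$c_1$--$\cdots$--$c_6$--$e_1$--$\cdots$, so the geodesic starting at $a$ is unique; along it $d(a,\cdot)$ takes the values $0,1,2,2,2,2,2,2,3,4,\dots$, and the intermediate-value (first-crossing) points for the annuli $[1,2)$, $[2,3)$, $[3,4)$ are $b$, $c_1$, $e_1$, with $d(c_1,e_1)=5>2r_0$. Worse, no selection whatsoever works from the base point $a$: any $x_3$ must be some $c_j$ with $d(b,c_j)\le 2$, forcing $j\le 2$, while any $x_4$ subject to the annulus condition must be $e_1$, and $d(c_j,e_1)=5$ for $j \le 2$. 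Sequences of unbounded length do exist in this space, but only from base points far down the $e$-tail, walking backwards toward $a$.

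So the failure is not merely in your selection rule but in the freedom your outline allows: for a badly chosen base point $x_1^{(n)}$ there may be no admissible sequence at all, hence a correct proof must choose the base points (not just the components $C_n$) and the paths with care, and ``geodesic plus discrete intermediate value'' only yields the annulus condition and the $2r_0$-closeness condition separately, never both simultaneously. This reconciliation is exactly the content of the citation \cite[Lemma~2.4]{LW18} that the paper leans on, and your sketch does not recover it; you correctly identify it as the combinatorial core, but the mechanism you propose for it does not work. The surrounding reductions (extended metrics, $\lrel$-classes, condition (2)) are indeed the only genuinely new issues in this setting, and those parts of your proposal are correct.
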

  
  We are now in a position to give the proof of the main contribution of the subsection. The following theorem characterizes the geometric property of having asymptotic dimension $0$ via the algebraic property of being locally finite, and via the C*-property of having a strongly quasi-diagonal uniform Roe algebra.
  \begin{theorem} \label{thm:asymdim-zero}
    Let $S$ be a quasi-countable inverse semigroup equipped with its unique proper and right subinvariant metric $d$. The following statements are equivalent:
    \begin{enumerate}
      \item \label{thm:asymdim-zero:lf} $S$ is locally finite.
      \item \label{thm:asymdim-zero:zero} $(S, d)$ has asymptotic dimension $0$.
      \item \label{thm:asymdim-zero:limit} $\unifroealg$ is an inductive limit of finite dimensional C*-sub-algebras.
      \item \label{thm:asymdim-zero:locaf} $\unifroealg$ is local AF.
      \item \label{thm:asymdim-zero:strqd} $\unifroealg$ is strongly quasi-diagonal.
    \end{enumerate}
  \end{theorem}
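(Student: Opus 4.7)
The plan is to establish the chain $(1)\Leftrightarrow(2)\Rightarrow(4)\Rightarrow(5)\Rightarrow(2)$ together with the definitional equivalence $(3)\Leftrightarrow(4)$. The equivalence $(1)\Leftrightarrow(2)$ is combinatorial: for the forward direction, given $r\geq 0$ let $F\Subset S$ be the finite set witnessing properness of $d$; by local finiteness $T:=\langle F\rangle\cup\{1\}$ is finite, and an easy induction shows that the $r$-component of any $x\in S$ is contained in $Tx$, hence has size at most $|T|$, so by Lemma~\ref{lemma:asdim0r} the space $(S,d)$ has asymptotic dimension zero. The converse follows from Lemma~\ref{lem:subsemigroup_inherits}, which propagates asymptotic dimension zero to finitely generated inverse subsemigroups, combined with Lemma~\ref{lemma:finite-and-asymdim-zero}(\ref{lemma:finite-and-asymdim-zero:finit}). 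The equivalence $(3)\Leftrightarrow(4)$ reflects the fact that, in the non-separable setting, an inductive limit over a directed system of finite-dimensional subalgebras is precisely a local AF algebra.

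For $(2)\Rightarrow(4)$, I would build an explicit local approximation tower inside $C_u^*(S)$. By Lemma~\ref{lemma:asdim0r}, for each $r\geq 0$ the $r$-components $\{C_\alpha^{(r)}\}$ partition $S$ into sets of uniformly bounded size $N_r$ and uniformly bounded diameter $k_r$. Let $B_r \subseteq C_u^*(S)$ be the C*-subalgebra of operators block-diagonal with respect to this partition; since each block has diameter at most $k_r$, elements of $B_r$ automatically have propagation at most $k_r$, and $B_r$ is the $\ell^\infty$-product $\prod_\alpha \mathcal{B}(\ell^2(C_\alpha^{(r)}))$ of matrix algebras of size $\leq N_r$. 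Such an $\ell^\infty$-product is local AF: given finitely many elements, total boundedness of the unit ball of $M_{N_r}$ yields a finite partition of the indexing set on which each element is approximately constant, and the resulting approximants live in a finite direct sum of copies of $M_{N_r}$. Since $B_r\subseteq B_{r+1}$ and $\bigcup_r B_r$ contains every finite-propagation operator, its closure is all of $C_u^*(S)$, giving local AF-ness.

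The implication $(4)\Rightarrow(5)$ is standard: any quotient of a local AF algebra is local AF by lifting finite-dimensional approximants, and local AF algebras are quasi-diagonal because their finite-dimensional subalgebras immediately furnish the nearly commuting finite-rank projections required by Definition~\ref{def:qd}.

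The main obstacle is $(5)\Rightarrow(2)$, which I would establish contrapositively. Supposing $(S,d)$ does not have asymptotic dimension zero, Lemma~\ref{lemma:asymdim-one} produces sets $X_n = \{x_1^{(n)},\ldots,x_{m(n)}^{(n)}\}$ in distinct $\lrel$-classes $L_n$ with $m(n)\to\infty$ and consecutive points within $2r_0$. The associated shift
\[
v\,\delta_{x_i^{(n)}} = \delta_{x_{i+1}^{(n)}} \quad(1\leq i<m(n)), \qquad v\,\delta_x = 0 \text{ otherwise,}
\]
has propagation at most $2r_0$ and so lies in $C_u^*(S)$; its defect satisfies $v^*v - vv^* = \sum_n\bigl(P_{x_1^{(n)}} - P_{x_{m(n)}^{(n)}}\bigr)$. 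The crux is to produce a closed two-sided ideal $I\subseteq C_u^*(S)$ in whose quotient $v$ becomes a proper (non-unitary) isometry; such a quotient cannot be quasi-diagonal by \cite[Proposition~7.1.15]{BO08}, contradicting strong quasi-diagonality of $C_u^*(S)$. A natural candidate for $I$ is the kernel of a representation assembled from compressions $A\mapsto P_{X_n} A P_{X_n}$ composed with an ultralimit along a non-principal ultrafilter on $\mathbb{N}$, arranged so that the projections onto the starting points survive while those onto the endpoints vanish (or vice versa). The technical hurdle is to verify that such $I$ is genuinely a two-sided ideal on which $v$ maps to a non-unitary isometry; this is essentially a bookkeeping exercise with ultralimits of matricial shifts, but it is where the genuine work of the theorem is concentrated.
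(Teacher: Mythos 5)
Your handling of (1)$\Leftrightarrow$(2) matches the paper's argument, and your direct proof of (2)$\Rightarrow$(4) via the block-diagonal algebras $B_r$ (an $\ell^\infty$-product of matrix algebras of uniformly bounded size, shown to be local AF by total boundedness) is correct and is actually more self-contained than the paper, which obtains (4) by first proving (2)$\Rightarrow$(3) through a citation of \cite[Theorem~2.2]{LW18}. However, there are two genuine gaps.

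First, condition (3) is never established. You assert that (3)$\Leftrightarrow$(4) is ``definitional,'' but it is not: for \emph{non-separable} C*-algebras --- and $C_u^*(S,d)$ is non-separable whenever $S$ is infinite, as the paper stresses --- being an inductive limit of a directed system of finite-dimensional subalgebras is strictly stronger than being local AF; the two classes are known to be distinct in the non-separable setting (this is the Farah--Katsura phenomenon, and it is precisely why the paper lists (3) and (4) as separate conditions). Only (3)$\Rightarrow$(4) is formal, via directedness. Your approximating algebras $B_r$ are $\ell^\infty$-products, not finite-dimensional, so your argument yields (4) but produces no directed family of finite-dimensional subalgebras, and nothing in your cycle $(1)\Leftrightarrow(2)\Rightarrow(4)\Rightarrow(5)\Rightarrow(2)$ ever implies (3). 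The repair is to refine each $B_r$: partition the index set of $r$-components into finitely many classes of identified blocks, take the operators constant on each class, and check that the resulting finite-dimensional subalgebras, over all $r$ and all such partitions, form a directed family with dense union --- this is in essence what the proof of \cite[Theorem~2.2]{LW18} cited by the paper does.

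Second, in (5)$\Rightarrow$(2) your candidate ideal is not an ideal, and this is not a bookkeeping issue. Compression $A \mapsto P_{X_n} A P_{X_n}$ is not multiplicative, because the chains $X_n$ are not coarsely isolated in $S$: for finite-propagation $A,B$ the error $P_{X_n} A (1-P_{X_n}) B P_{X_n}$ picks up paths that leave $X_n$ and return, and these need not vanish along any ultrafilter. Hence the kernel of your ultralimit-of-compressions map need not be a two-sided ideal, and $v$ has no reason to become an isometry in any quotient. (A further defect: since your $v$ is $0$ rather than the identity off the chains, $1 - v^*v$ dominates the projection onto $S \setminus \bigcup_n X_n$, so the desired quotient would have to annihilate that entire corner as well.) The paper circumvents exactly this: it cuts down by the projection $p$ onto $\ell^2(\bigsqcup_n L_n)$, which \emph{is} multiplicative (indeed central) because $\bigsqcup_n L_n$ is a union of metric components; it re-metrizes $\bigsqcup_n L_n$ into a non-extended space $(Y,\tilde d)$ with $\tilde d(L_n,L_m)\to\infty$; and it then invokes the quotient constructed in \cite[Theorem~2.2]{LW18}, which sends the shift $t$ (defined as the identity off the chains) to a proper isometry, so that the composite homomorphism exhibits a non-quasi-diagonal quotient of $C_u^*(S,d)$ by \cite[Proposition~7.1.15]{BO08}. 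That quotient construction is a genuine limit-operator argument --- compressions to $R$-balls around the basepoints $x_1^{(n)}$ with both $n$ and $R$ tending to infinity --- and it is exactly the work your proposal defers as ``bookkeeping.''
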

  \begin{proof}
    We shall prove that~(\ref{thm:asymdim-zero:lf})~$\Leftrightarrow$~(\ref{thm:asymdim-zero:zero}), 
    and~(\ref{thm:asymdim-zero:zero})~$\Rightarrow$~(\ref{thm:asymdim-zero:limit})~$\Rightarrow$~(\ref{thm:asymdim-zero:locaf})~$\Rightarrow$~(\ref{thm:asymdim-zero:strqd})~$\Rightarrow$~(\ref{thm:asymdim-zero:zero}).

    Observe that, by Lemmas~\ref{lemma:unital-technicality} and~\ref{lemma:asymdim-zero:unital}, in order to prove~(\ref{thm:asymdim-zero:lf})~$\Leftrightarrow$~(\ref{thm:asymdim-zero:zero}) we may assume that $S$ is a monoid. In this setting, the implication~(\ref{thm:asymdim-zero:zero})~$\Rightarrow$~(\ref{thm:asymdim-zero:lf}) is an immediate consequence of Lemmas~\ref{lemma:finite-and-asymdim-zero}~(\ref{lemma:finite-and-asymdim-zero:finit}) and~\ref{lem:subsemigroup_inherits}~(\ref{lem:subsemigroup_inherits:asdim0}).

    For~(\ref{thm:asymdim-zero:lf})~$\Rightarrow$~(\ref{thm:asymdim-zero:zero}), given $r > 0$ we will show that the $r$-components of $S$ are uniformly bounded in size which, by Lemma~\ref{lemma:asdim0r}, proves~(\ref{thm:asymdim-zero:zero}). Since $d$ is proper there is some finite $F \Subset S$ such that $y \in Fx$ for every $x, y \in S$ such that $d(x, y) \leq r$. Indeed, just put $F := \{1\} \cup \tilde{F}$ where $\tilde{F}$ is as in Definition~\ref{def:metric}~(\ref{def:metric:pr}). The inverse semigroup $M := \langle F\rangle \subset S$ is then finite, as it is finitely generated. Suppose $s$ and $t$ are in the same $r$-component of $S$, i.e., there are elements $x_1, \ldots, x_n \in S$ such that $x_1 = s$, $x_n = t$, and $d(x_i, x_{i+1}) \leq r$. Then $x_{i} \in Fx_{i+1} \subseteq Mx_{i+1}$ by assumption. Therefore, by induction, $s \in Mt$, since $M$ is closed under multiplication. We hence have that the $r$-component of $t$ is contained in $Mt$ and thus has size at most $|M|$, proving the claim.

    The proof of~(\ref{thm:asymdim-zero:lf})~$\Rightarrow$~(\ref{thm:asymdim-zero:limit}) is the exact same as that of~\cite[Theorem~2.2]{LW18} (1) $\Rightarrow$ (2), which is the analogous statement for non-extended metric spaces.

    Lastly, since the implications~(\ref{thm:asymdim-zero:limit})~$\Rightarrow$~(\ref{thm:asymdim-zero:locaf})~$\Rightarrow$~(\ref{thm:asymdim-zero:strqd}) are well known to hold for general C*-algebras, let us prove that~(\ref{thm:asymdim-zero:strqd})~$\Rightarrow$~(\ref{thm:asymdim-zero:zero}). We shall prove the contrapositive, and assuming $\asdim(S) >0$ we will construct a homomorphism $\pi \colon \unifroealg \rightarrow B$ where $\pi(\unifroealg)$ contains a proper isometry. This implies there is a non-quasi-diagonal quotient of $\unifroealg$ (see~\cite[Proposition~7.1.15]{BO08}). First, since $S$ is not of asymptotic dimension $0$, let $r_0 \geq 0, \{X_n\}_{n \in \mathbb{N}}$ and $\{L_n\}_{n \in \mathbb{N}}$ be as in Lemma~\ref{lemma:asymdim-one}. Consider
    $$\textstyle \psi \colon \unifroealg \rightarrow C_u^*\left(\bigsqcup_n L_n, d\right), \;\; x \mapsto xp, $$
    where $p \in \mathcal{B}(\ell^2(S))$ denotes the orthogonal projection onto $\ell^2(\bigsqcup_n L_n)$. In particular, note that $x p = px$ for all $x \in \unifroealg$, and, therefore, $\psi$ is a surjective homomorphism. Now, let $(Y, \tilde{d})$ be any non-extended metric space such that $Y = \bigsqcup_{n \in \mathbb{N}} L_n$ and $\tilde{d}$ restricts to $d$ in $L_n$, whereas $\tilde{d}(L_n, L_m) \rightarrow \infty$ when $n+m \rightarrow \infty$ and $n \neq m$. We will compose $\psi$ with the natural inclusion map
    $$\textstyle \iota \colon C_u^*\left(\bigsqcup_n L_n, d\right) \hookrightarrow C_u^*(Y, \tilde{d}). $$
    The map $\iota \circ \psi$ is then a homomorphism, and its image contains all the operators $t \in C_u^*(Y, \tilde{d})$ of finite propagation and that respect the sets $L_n \subset Y$, that is, $tq_n = q_nt$, where $q_n \in C_u^*(Y, \tilde{d})$ projects onto $\ell^2(L_n) \subset \ell^2(Y)$.\footnote{Note, however, that $\iota \circ \psi$ is \textit{not} surjective, as the operators in $C_u^*(Y, \tilde{d})$ may move the pieces $L_n$ around.} Hence, the operator
    $$ t \colon \ell^2\left(Y\right) \rightarrow \ell^2\left(Y\right), \;\; \delta_y \mapsto
        \left\{
          \begin{array}{rl}
            \delta_{x_{k+1}^{\left(n\right)}} & \text{if} \; y = x_{k}^{\left(n\right)} \; \text{and} \; k \leq m\left(n\right) - 1, \\
            0 & \text{if} \; y = x_{m\left(n\right)}^{\left(n\right)}, \\
            \delta_y & \text{otherwise},
          \end{array}
        \right. $$
    is in the image of $\iota \circ \psi$. Now, $(Y, \tilde{d})$ is a non-extended metric space of bounded geometry, so as shown in the proof of~\cite[Theorem~2.2]{LW18} (4) $\Rightarrow$ (1), we obtain a quotient map $\xi \colon C_u^*(Y, \tilde{d}) \rightarrow B$, where the operator $t$ above maps to a proper isometry. Putting things together, $\pi := \xi \circ \iota \circ \psi \colon \unifroealg \rightarrow B$ is a homomorphism whose image contains a proper isometry. This shows that $\unifroealg$ is not strongly quasi-diagonal, and completes the proof.
  \end{proof}

  \subsection{Local \texorpdfstring{$\lrel$}{L}-finiteness, sparseness and quasi-diagonality} \label{subsec:sparse}
  The goal of this final section of the paper is to give an analogous characterization for sparse inverse semigroups.
  \begin{lemma} \label{lemma:approx-unifroealg}
    Given $\varepsilon > 0$ and $v_1, \dots, v_k \in \ell^2(S)$, there is some finite $F \Subset S$ such that if $q$ is the projection onto $\text{span}\{\delta_x \mid x \in F\}$, then $\left\|qv_i - v_i\right\| \leq \varepsilon$ for all $i = 1, \dots, k$.
  \end{lemma}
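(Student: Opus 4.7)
The statement is a standard density fact: elements of $\ell^2(S)$ are approximable by finitely supported vectors, and we only have finitely many $v_i$ to handle simultaneously. The plan is therefore to handle each $v_i$ individually and then take the union of the resulting finite sets.

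First, I would fix $i \in \{1,\dots,k\}$. Since $v_i \in \ell^2(S)$, the series $\sum_{x \in S} |v_i(x)|^2 = \|v_i\|^2$ converges, so its net of finite partial sums converges to $\|v_i\|^2$. Consequently, one can choose a finite set $F_i \Subset S$ such that
\[ \sum_{x \in S \setminus F_i} |v_i(x)|^2 \;\leq\; \varepsilon^2. \]
If $q_i \in \mathcal B(\ell^2(S))$ denotes the orthogonal projection onto $\mathrm{span}\{\delta_x \mid x \in F_i\}$, then $v_i - q_iv_i$ is supported off $F_i$, so
\[ \|q_iv_i - v_i\|^2 \;=\; \sum_{x \in S \setminus F_i} |v_i(x)|^2 \;\leq\; \varepsilon^2. \]

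Next, I would set $F := F_1 \cup \dots \cup F_k$, which is still finite, and let $q$ be the projection onto $\mathrm{span}\{\delta_x \mid x \in F\}$. Since $F \supseteq F_i$, the projection $q$ dominates each $q_i$, and in particular $S \setminus F \subseteq S \setminus F_i$. Hence
\[ \|qv_i - v_i\|^2 \;=\; \sum_{x \in S \setminus F} |v_i(x)|^2 \;\leq\; \sum_{x \in S \setminus F_i} |v_i(x)|^2 \;\leq\; \varepsilon^2, \]
which gives the desired estimate simultaneously for every $i = 1, \dots, k$.

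There is no genuine obstacle here; the only thing to be mindful of is that this proof uses nothing about the inverse semigroup structure of $S$ or about the metric $d$. The lemma is a purely Hilbert-space statement, presumably being set up as a preparatory tool for verifying the quasi-diagonality-type conditions in the later theorem, where one needs finite rank projections that nearly fix a prescribed finite set of vectors while nearly commuting with finitely many operators.
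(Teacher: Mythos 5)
Your proof is correct and is exactly the ``routine $\varepsilon/2$-argument'' the paper's one-line proof alludes to: truncate each $v_i$ to a finite set capturing all but $\varepsilon^2$ of its $\ell^2$-mass, take the union of these sets, and note that enlarging the support set only decreases the error. No gap, and no meaningful difference from the paper's intended argument.
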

  \begin{proof}
    The proof follows from a routine $\varepsilon/2$-argument.
  \end{proof}

  \begin{theorem} \label{thm:boxspace}
    Let $S$ be a quasi-countable inverse semigroup equipped with its unique proper and right subinvariant metric $d$. The following statements are equivalent:
    \begin{enumerate}
      \item \label{thm:boxspace:lf} $S$ is locally $\lrel$-finite.
      \item \label{thm:boxspace:box} $(S, d)$ is sparse.
      \item \label{thm:boxspace:qd} $\unifroealg$ is quasi-diagonal.
      \item \label{thm:boxspace:sf} $\unifroealg$ is stably finite.
      \item \label{thm:boxspace:f} $\unifroealg$ is finite.
    \end{enumerate}
  \end{theorem}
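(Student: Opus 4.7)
The plan is to prove the cycle $(1) \Leftrightarrow (2) \Rightarrow (3) \Rightarrow (4) \Rightarrow (5) \Rightarrow (2)$. The chain $(3) \Rightarrow (4) \Rightarrow (5)$ consists of standard facts recalled in Subsection~\ref{subsec:pre-roe}: quasi-diagonality implies stable finiteness, which in turn implies finiteness.

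For $(1) \Leftrightarrow (2)$, I would follow the blueprint of the analogous equivalence in Theorem~\ref{thm:asymdim-zero}, with Lemma~\ref{lemma:sparser} playing the role of Lemma~\ref{lemma:asdim0r}. For the forward direction, fix $r > 0$ and let $F \Subset S$ (with $1 \in F$, after passing to $S^1$ via Lemma~\ref{lemma:box:unital}) witness $r$-properness. For each $t \in S$, put $N_t := \langle F \cup \{t, t^*\}\rangle$, which is finitely generated, hence $\lrel$-finite by hypothesis. Any element $s$ in the $r$-component of $t$ can be written as $s = m_k \cdots m_1 t$ with $m_i \in F$, so it lies in $N_t$; moreover $s \lrel t$ in $N_t$, since this relation is equivalent to the absolute condition $s^*s = t^*t$. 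Hence the $r$-component of $t$ embeds into the finite $\lrel$-class of $t$ in $N_t$, and sparseness follows from Lemma~\ref{lemma:sparser}. For the reverse direction, apply Lemma~\ref{lem:subsemigroup_inherits}(\ref{lem:subsemigroup_inherits:sparse}) to any finitely generated inverse subsemigroup $T \subseteq S$ and then Lemma~\ref{lemma:finite-and-asymdim-zero}(\ref{lemma:finite-and-asymdim-zero:l-fin}) to conclude $T$ is $\lrel$-finite.

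For $(2) \Rightarrow (3)$, the key point is that sparseness lets us construct finite-rank projections that commute \emph{exactly} with any bounded-propagation operator. Given a finite $F \Subset \unifroealg$ and $\varepsilon > 0$, approximate each $a \in F$ within $\varepsilon/3$ by a finite-propagation operator and let $r$ bound all these propagations. Given vectors $v_1, \ldots, v_k \in \ell^2(S)$, Lemma~\ref{lemma:approx-unifroealg} yields a finite $G \Subset S$ such that the projection onto $\mathrm{span}\{\delta_x : x \in G\}$ almost fixes them. Enlarge $G$ to $G'$ by adjoining every $r$-component meeting $G$; by sparseness (Lemma~\ref{lemma:sparser}) each such $r$-component is finite, so $G'$ is a finite union of finite sets, hence finite. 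The projection $p$ onto $\mathrm{span}\{\delta_x : x \in G'\}$ commutes with every operator of propagation at most $r$, since such operators cannot move basis vectors out of their $r$-component, so they preserve both $\ell^2(G')$ and its orthogonal complement. A standard $\varepsilon/3$-argument then yields $\|pa - ap\| \leq \varepsilon$ for every $a \in F$, witnessing quasi-diagonality.

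For $(5) \Rightarrow (2)$, I argue the contrapositive. If $(S, d)$ is not sparse, Lemma~\ref{lemma:sparser} produces some $r > 0$ and an infinite $r$-component $C \subseteq S$. Bounded geometry makes the $r$-neighbourhood graph on $C$ infinite, connected, and locally finite, so K\"onig's lemma yields an infinite sequence $x_1, x_2, \ldots$ of distinct points of $C$ with $d(x_i, x_{i+1}) \leq r$. Define $t \in \mathcal B(\ell^2(S))$ by $t\delta_{x_i} := \delta_{x_{i+1}}$ for each $i \geq 1$ and $t\delta_y := \delta_y$ for $y \notin \{x_j\}_{j \geq 1}$. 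Then $t$ has propagation at most $r$, hence lies in $\unifroealg$, and a direct computation yields $t^*t = 1$ while $tt^* = 1 - e_{x_1}$, exhibiting $t$ as a proper isometry. Thus $\unifroealg$ is infinite, completing the cycle.

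The main subtlety I anticipate is securing the exact commutation in $(2) \Rightarrow (3)$: one must carefully verify that padding $G$ to a union of entire $r$-components makes $p$ commute with any operator of propagation at most $r$, which crucially uses the fact that $r$-components are preserved by such operators \emph{and} that sparseness keeps this padded set finite. The remaining steps are natural adaptations of arguments from Theorem~\ref{thm:asymdim-zero}, replacing the ``box space'' quotient detour employed there with a direct construction of a proper isometry in the last implication.
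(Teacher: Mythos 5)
Your proof is correct, and it takes a genuinely different route from the paper's. Both arguments establish (1) $\Leftrightarrow$ (2) first, but the paper then runs the cycle (1) $\Rightarrow$ (3) $\Rightarrow$ (4) $\Rightarrow$ (5) $\Rightarrow$ (1), pivoting on local $\lrel$-finiteness, whereas you run (2) $\Rightarrow$ (3) $\Rightarrow$ (4) $\Rightarrow$ (5) $\Rightarrow$ (2), pivoting on sparseness. For quasi-diagonality the paper builds its finite-rank projection algebraically: it sets $T = \langle F_1 \cup F_2\rangle$, projects onto $\mathrm{span}\{\delta_x \colon x \in T,\ x \lrel m \text{ for some } m \in F_1 \cup F_2\}$ (finite by local $\lrel$-finiteness), checks $pv_t = v_tp$ exactly for $t \in T$, and then uses the decomposition of finite-propagation operators as finite sums $\sum_s v_s f_s$ coming from the proof of Theorem~\ref{thm:roealg:unique}; your projection onto a finite union of $r$-components instead commutes exactly with \emph{every} operator of propagation at most $r$ for purely metric reasons, with no reference to the operators $v_t$. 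Likewise, for the final implication the paper extracts an infinite bounded-step path from an infinite $\lrel$-class of a finitely generated inverse subsemigroup (via its word metric and Sch\"utzenberger graph), while you extract one from an infinite $r$-component via K\"onig's lemma; the resulting shift-type proper isometry is the same operator. What your route buys is modularity and generality: your implications (2) $\Rightarrow$ (3) and (5) $\Rightarrow$ (2) are statements about arbitrary extended metric spaces of bounded geometry, with the inverse semigroup entering only through (1) $\Leftrightarrow$ (2), whereas the paper's arguments stay tied to the semigroup structure throughout. Finally, your proof of (1) $\Rightarrow$ (2) --- adjoining $t, t^*$ to the generating set so that the whole $r$-component of $t$ lies in $N_t$, and using that $\lrel$ is the intrinsic condition $s^*s = t^*t$ to place that component inside a single finite $\lrel$-class of $N_t$ --- is cleaner than the paper's, which generates $M = \langle F\rangle$ without the base point and must then manipulate the auxiliary element $n = mss^*$ to fit the $r$-component of $s$ inside a finite translate $Ls$.
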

  \begin{proof}
    The proof of this theorem is very similar to that of Theorem~\ref{thm:asymdim-zero}, both at a technical and heuristic level. We shall, hence, follow the same strategies, and show that~(\ref{thm:boxspace:lf})~$\Leftrightarrow$~(\ref{thm:boxspace:box}), and that~(\ref{thm:boxspace:lf})~$\Rightarrow$~(\ref{thm:boxspace:qd})~$\Rightarrow$~(\ref{thm:boxspace:sf})~$\Rightarrow$~(\ref{thm:boxspace:f})~$\Rightarrow$~(\ref{thm:boxspace:lf}).

    First, for the equivalence between~(\ref{thm:boxspace:lf}) and~(\ref{thm:boxspace:box}) observe that, by Lemmas~\ref{lemma:unital-technicality} and~\ref{lemma:asymdim-zero:unital}, we may assume that $S$ is a monoid. In order to prove~(\ref{thm:boxspace:lf})~$\Rightarrow$~(\ref{thm:boxspace:box}), given any $r > 0$ and $s\in S$ we will show that the $r$-component of $s$ is finite. As $d$ is proper, let $F \Subset S$ be finite and such that $y \in Fx$ for every $x, y \in S$ such that $d(x, y) \leq r$, and consider $M := \langle F \cup \{s\} \rangle \subset S$, which is an $\mathcal L$-finite inverse subsemigroup of $S$.
    
    Let $t$ be an element of $S$ in the same $r$-component. Just as in the proof of~(\ref{thm:asymdim-zero:lf})~$\Rightarrow$~(\ref{thm:asymdim-zero:zero}) in Theorem~\ref{thm:asymdim-zero}, we obtain that $t = ms$ for some $m \in M$. Note that since $s$ and $t$ must be in the same $\mathcal L$-class of $S$, we have $s^*s = t^* t = s^* m^* ms$.   Let $n=mss^* \in M$. Notice that $t = ms = mss^*s = ns$, and furthermore $n^* n=(m ss^*)^* m ss^* =s(s^* m^* m s)s^* =ss^* ss^*=ss^*$, thus $n$ is $\mathcal L$-related to $ss^*$ both in $S$ and in $M$. Denote the $\mathcal L$-class of $ss^*$ in $M$ by $L$, and note that $L$ is finite. As $n \in L$ and $t =ns$, we have obtained that $t \in Ls$ for any $t$ in the $r$-component of $s$. Thus the $r$-component of $s$ is contained in the finite set $Ls$, as required.
    
    The implication~(\ref{thm:boxspace:box})~$\Rightarrow$~(\ref{thm:boxspace:lf}) is an immediate consequence of Lemmas~\ref{lemma:finite-and-asymdim-zero}~(\ref{lemma:finite-and-asymdim-zero:l-fin}) and~\ref{lem:subsemigroup_inherits}~(\ref{lem:subsemigroup_inherits:sparse}).

    The fact that~(\ref{thm:boxspace:lf})~$\Rightarrow$~(\ref{thm:boxspace:qd}) follows from some simple approximation arguments. Indeed, observe that given $K \Subset \unifroealg, \varepsilon > 0$ and $v_1, \dots, v_k \in \ell^2(S)$, it is enough to construct a finite rank orthogonal projection $p \in \mathcal{B}(\ell^2(S))$ such that
    $$ \left|\left|pv_i - v_i\right|\right| \leq \varepsilon \;\; \text{and} \;\; \left|\left|pa - ap\right|\right| \leq \varepsilon \;\; \text{for all} \;\, a \in K \;\, \text{and} \,\; i = 1, \dots, k. $$
    By a routine $\varepsilon/2$-argument we may, without loss of generality, suppose that every $a \in K$ is of finite propagation, say bounded by $r \geq 0$. Then let $F_1 \Subset S$ be a finite set witnessing the $r$-properness of $d$. In addition, let $F_2 \Subset S$ and $q$ be as in Lemma~\ref{lemma:approx-unifroealg}, and put $T := \langle F_1 \cup F_2 \rangle$. As $S$ is locally $\lrel$-finite, observe that the $\lrel$-classes of $T$ are finite, though $T$ itself might be infinite. Consider then the subspace $V \subset \ell^2(S)$ generated by the finite set
    $$\{\delta_x: x \in T, x \lrel m\hbox{ for some }m \in F_1 \cup F_2\}.$$    
    We claim that, then, the orthogonal projection $p$ onto $V$ meets the requirements of the claim. Indeed, $p$ has finite rank, since $V$ is finite dimensional. Likewise, $p \geq q$, and hence $\left\|pv_i - v_i\right\| \leq \left\|qv_i - v_i\right\| \leq \varepsilon$ for every $i = 1, \dots, k$. Lastly, observe that $v_tp = pv_t$ for all $t \in T$. Indeed, for every $x \in S$ we have that
    $$ \left(v_t p\right) \left(\delta_x\right) = \left\{\begin{array}{rl} \delta_{tx} & \text{if} \;\; \delta_x \in V \;\, \text{and} \;\, x = t^*tx, \nonumber \\ 0 & \text{otherwise.}\end{array} \right. $$
    Likewise,
    $$ \left(p v_t\right) \left(\delta_x\right) = \left\{\begin{array}{rl} \delta_{tx} & \text{if} \;\; \delta_{tx} \in V \;\, \text{and} \;\, x = t^*tx, \nonumber \\ 0 & \text{otherwise.}\end{array} \right. $$
    In particular, observe that if $x = t^*tx$ then $x \lrel tx$, and hence $\delta_x \in V$ if, and only if, $\delta_{tx} \in V$. Thus, $pv_t = v_tp$, and hence $pa = ap$ for all $a \in K$ since, by construction, every element $a \in K$ is a finite sum of monomials of the form $fv_t$ with $t \in T$, as desired.

    The implications~(\ref{thm:boxspace:qd})~$\Rightarrow$~(\ref{thm:boxspace:sf})~$\Rightarrow$~(\ref{thm:boxspace:f}) hold for general C*-algebras (see, e.g.,~\cite[Proposition~7.1.15]{BO08}).

    Lastly, we shall show~(\ref{thm:boxspace:f})~$\Rightarrow$~(\ref{thm:boxspace:lf}). We show this by proving the contrapositive, and suppose $S$ is not locally $\lrel$-finite, that is, there is a finitely generated inverse subsemigroup $T \subset S$ and an $\lrel$-class $L \subset T$ such that $L$ is infinite. Since $T$ is finitely generated, its unique proper and right subinvariant metric $d_T$ is the path metric in the Sch\"{u}tzenberger graphs of $T$. Therefore there is an infinite $1$-path $\{x_n\}_{n \in \mathbb{N}}$, that is, $x_n \in L \subset T \subset S$ and $d_T(x_n, x_{n+1}) = 1$. Moreover, observe that this $1$-path is also a $1$-path in $(S, d)$. Hence, the operator
    $$ v \colon \ell^2\left(S\right) \rightarrow \ell^2\left(S\right), \;\; \text{where} \;\, \delta_x \rightarrow
        \left\{
          \begin{array}{rl}
            \delta_{x_{n+1}} & \text{if} \;\, x = x_n, \\
            \delta_x & \text{if} \;\, x \neq x_k \,\; \text{for any} \,\; k \in \mathbb{N},
          \end{array}
        \right. $$
    is a proper isometry, i.e., $1 = v^*v > vv^*$. Moreover, $v$ clearly has propagation at most $1$, and hence $v \in \unifroealg$, finishing the proof (see~\cite[Proposition~7.1.15]{BO08}).
  \end{proof}

  \providecommand{\bysame}{\leavevmode\hbox to3em{\hrulefill}\thinspace}
  
\end{document}